\newtheorem*{rep@theorem}{\rep@title}
\newcommand{\newreptheorem}[2]{%
\newenvironment{rep#1}[1]{%
 \def\rep@title{#2 \ref{##1}}%
 \begin{rep@theorem}}%
 {\end{rep@theorem}}}
\newtheorem{definition}{Definition}
\newtheorem{proposition}[definition]{Proposition}
\newtheorem{theorem}[definition]{Theorem}
\newtheorem{corollary}[definition]{Corollary}
\newtheorem{lemma}[definition]{Lemma}
\newtheorem{conjecture}[definition]{Conjecture}
\newcommand{\comment}[1]{}
\newcommand{\N}{\mathbb N}
\newcommand{\Z}{\mathbb Z}
\newcommand{\es}{\emptyset}
\newcommand{\EP}{Erd\H{o}s-P\'osa}
\newcommand{\sparse}{robust}
\newcommand{\cT}{\mathcal{T}}
\newcommand{\cP}{\mathcal{P}}
\newcommand{\cQ}{\mathcal{Q}}
\newcommand{\cC}{\mathcal{C}}
\newcommand{\cG}{\mathcal{G}}
\newcommand{\G}{\Gamma}
\newcommand{\g}{\gamma}
\newcommand{\zT}{\mathcal{T}}
\newcommand{\dirG}{\overrightarrow{G}}
\newcommand{\dirW}{\overrightarrow{W}}
\newcommand{\dirWone}{\overrightarrow{W_1}}
\newcommand{\dirH}{\overrightarrow{H}}
\newcommand{\dirU}{\overrightarrow{U}}
\DeclareMathOperator{\head}{head}
\DeclareMathOperator{\tail}{tail}
\tikzset{
    declare function={Floor(\x)=round(\x-0.5);}
}
\newcommand{\sm}{\setminus}
\title{A unified Erd\H{o}s-P\'osa theorem for constrained cycles}
\author{Tony Huynh, Felix Joos and Paul Wollan
\thanks{This research is supported by the European Research Council under the European Unions Seventh Framework Programme (FP7/2007-2013)/ERC Grant Agreement no. 279558.}
}
\date{}
\begin{document}
\maketitle

\begin{abstract}
A \emph{$(\G_1,\G_2)$-labeled graph} is an oriented graph with its edges labeled by elements of the direct sum of two  
groups $\G_1,\G_2$.  
A cycle in such a labeled graph is \emph{$(\G_1,\G_2)$-non-zero} if it is non-zero in both coordinates.  
Our main result is a generalization of the Flat Wall Theorem of Robertson and Seymour to $(\G_1,\G_2)$-labeled graphs. 
As an application, we determine all canonical obstructions to the Erd\H{o}s-P\'osa property for $(\G_1,\G_2)$-non-zero cycles in $(\G_1,\G_2)$-labeled graphs. The obstructions imply that the half-integral Erd\H{o}s-P\'osa property always holds for $(\G_1,\G_2)$-non-zero cycles.   

Moreover, our approach gives a unified framework for proving packing results for constrained cycles in graphs.  
For example, as immediate corollaries we recover the \EP{} property for cycles and $S$-cycles and the half-integral \EP{} property for odd cycles and odd $S$-cycles.  Furthermore, we recover Reed's Escher-wall Theorem.

We also prove many new packing results as immediate corollaries.  
For example, we show that the half-integral \EP{} property holds for cycles not homologous to zero, odd cycles not homologous to zero, and $S$-cycles not homologous to zero.  Moreover, the (full) \EP{} property holds for $S_1$-$S_2$-cycles and cycles not homologous to zero on an orientable surface.  Finally, we also describe the canonical obstructions to the \EP{} property for cycles not homologous to zero and for odd $S$-cycles.  
\end{abstract}

\section{Introduction}

Erd\H{o}s and P\'osa proved in a seminal paper~\cite{EP62} that for every positive integer $k$, 
there exists a constant $f(k)$ such that every graph $G$ 
either contains $k$ pairwise disjoint\footnote{In this paper disjoint always means vertex-disjoint.} cycles, or a set $X \subseteq V(G)$ of size at most $f(k)$ such that $G-X$ has no cycle.  This result has had numerous extensions and generalizations to cycles satisfying further constraints.  Specific examples of families of cycles which have been studied are: cycles of odd length \cite{Ree99},  cycles of length at least $\ell$ for some $\ell\in \N$ \cite{BBR07,FH13}, disjoint $S$-cycles where each cycle intersects a prescribed set of vertices $S$ \cite{KKM11,PW12},  $S$-cycles of odd length \cite{KK13}, and $S$-cycles of length at least $\ell$ again for some fixed value $\ell$ \cite{BJS14}.  In each case, the goal is to show the \emph{Erd\H{o}s-P\'osa property} for a given family of cycles: that is, the existence of a function $f$ such that for all $k$, every graph either has $k$ disjoint cycles satisfying the desired constraints, or a set $X$ of at most $f(k)$ vertices intersecting every such cycle.

In this paper, we consider the analogous problem in group-labeled graphs.  
This approach provides a unified framework for proving \EP{} results for families of cycles with additional constraints on the cycles. 
In particular, we obtain all of the above results as corollaries of our main theorems.  

In this article, the graphs we consider may have loops and parallel edges.  An \emph{oriented graph} is a graph $\overrightarrow{G}$ along with two functions \emph{$\head_{\overrightarrow{G}}$} and \emph{$\tail_{\overrightarrow{G}}$} from $E(G)$ to $V(G)$ such that for every non-loop edge $e$ with distinct endpoints $u$ and $v$, we have $\head_{\overrightarrow{G}}(e) \in \{u, v\}$, and $\tail_{\overrightarrow{G}}(e) =\{u,v\}\setminus \head_{\overrightarrow{G}}(e)$.  
In the case when $e$ is a loop on a vertex $v$, we have $\{\head_{\overrightarrow{G}}(e), \tail_{\overrightarrow{G}}(e)\}  = \{v\}$.  Given an oriented graph $\overrightarrow{G}$, the undirected graph $G$ with the same vertex and edge set as $\overrightarrow{G}$ is the graph obtained by \emph{forgetting} the orientations.  Notationally, we will always indicate oriented graphs with an arrow and undirected graphs without.  When there can be no confusion, we will simply use $tail$ and $head$ in the place of $\tail_{\overrightarrow{G}}$ and $\head_{\overrightarrow{G}}$.

Let $\Gamma, \Gamma_1, \Gamma_2$ be (possibly infinite, possibly non-abelian) groups. Even though our theorems hold for non-abelian groups, we will always use $+$ to denote the group operation and $0$ for the identity element.  This follows the convention in \cite{CGGGLS06}, and is mainly because the adjective \emph{non-zero} is so ubiquitous in the literature.  
A \emph{$\Gamma$-labeled} graph is a pair $(\overrightarrow{G}, \gamma)$, where $\overrightarrow{G}$ is an oriented graph and $\gamma:E(\overrightarrow{G}) \to \Gamma$. Group-labeled graphs have been extensively studied; see~\cite{CGGGLS06, Huynh09, Zaslavsky89, Zaslavsky91}.   
Let $v$ be an end of an edge $e$.  We define $\gamma(e, v):=\gamma(e)$ if $v=\head(e)$ and 
$\gamma(e,v)=-\gamma(e)$ if $v=\tail(e)$.   If $\overrightarrow{H}$ is a subgraph of $\dirG$, we abuse notation by letting $\g$ also denote the restriction of $\g$ to $E(\overrightarrow{H})$.

A \emph{walk} in $(\overrightarrow{G}, \gamma)$ is a walk in $G$. Let $W=v_0 e_1 v_1 e_2 v_2 \dots e_\ell v_\ell$ be a walk in $(\overrightarrow{G}, \gamma)$. The \emph{length} of $W$ is $\ell$, and its \emph{ends} are $v_0$ and $v_\ell$.  
We say that $W$ is a \emph{path} if $v_0, \dots, v_\ell$ are distinct and is a \emph{cycle} if $v_0=v_\ell$ and $v_1, \dots, v_\ell$ are distinct. The \emph{group-value}
of $W$, denoted $\gamma(W)$, is defined to be $\gamma(e_1, v_1)+ \dots + \gamma(e_\ell, v_\ell)$. 
We say $W$ is \emph{$\G$-zero} if $\gamma(W)=0$. Otherwise, we say that $W$ is \emph{$\G$-non-zero}. Note that if $W$ is a cycle, it is naturally equipped with a starting point and an orientation (since it is a walk).  
We remark that if $C_1$ and $C_2$ are cycles such that $E(C_1)=E(C_2)$, then $C_1$ is $\G$-zero if and only if $C_2$ is $\G$-zero (see Section~\ref{sec:background} for a proof of this claim).
Thus, whether a cycle $C$ is $\G$-zero or $\G$-non-zero only depends on its edge set.  
This is easy to see if $\G$ is abelian, since in this case $\g(C_1) \in \{\g(C_2), -\g(C_2)\}$.

We consider a stronger notion of non-zero cycles in the case that $(\overrightarrow{G}, \g)$ is a $(\G_1 \oplus \G_2)$-labeled graph\footnote{We denote by $\G_1 \oplus \G_2$ the direct sum of $\G_1$ and $\G_2$. The elements of $\G_1 \oplus \G_2$ can be written as $(\alpha_1,\alpha_2)$ with $\alpha_i\in \G_i$ and the group operation is induced by the coordinate-wise group operation of $\G_1$ and $\G_2$, respectively.}. 
For $i \in [2]$, we let $\g_i$ be the projection of $\g$ onto $\G_i$.
A walk $W$ is \emph{$\G_i$-zero} if $\gamma_{i}(W)=0$.  Otherwise, $W$ is \emph{$\G_i$-non-zero}.
If $W$ is $\G_1$-non-zero and $\G_2$-non-zero,
then we say $W$ is \emph{$(\G_1,\G_2)$-non-zero}.

Let $\mathcal{G}$ be a class of $(\G_1 \oplus \G_2)$-labeled graphs.  We say that $\mathcal{G}$ has the \emph{\EP-property for $(\G_1,\G_2)$-non-zero cycles} if there exists a function $f_{\cal{G}}(k)$ such that every $(\overrightarrow{G}, \g) \in \mathcal{G}$ either contains $k$ pairwise disjoint $(\G_1,\G_2)$-non-zero cycles, or a set $X \subseteq V(G)$ of size at most $f_{\mathcal{G}}(k)$ such that $(\overrightarrow{G}-X, \g)$ has no $(\G_1,\G_2)$-non-zero cycle. We say that $f_{\mathcal{G}}(k)$ is an \emph{\EP\ function for $(\G_1,\G_2)$-non-zero cycles in $\cG$}.

In Section \ref{sec:obstructions}, we show that the \EP-property for $(\G_1,\G_2)$-non-zero cycles does \emph{not} hold for the class of all $(\G_1 \oplus \G_2)$-labeled graphs. However, the following weakening of the \EP-property does hold.   We say that a class $\mathcal{G}$ of $(\G_1 \oplus \G_2)$-labeled graphs has the \emph{half-integral \EP-property for $(\G_1,\G_2)$-non-zero cycles} if there exists a function $f_{\mathcal{G}}(k)$ such that for every $(\overrightarrow{G}, \g) \in \mathcal{G}$ either
\begin{enumerate}[(i)]
\item there is a collection $\cC$ of $k$ $(\G_1,\G_2)$-non-zero cycles such that each vertex of $(\dirG, \g)$ is contained in at most two members of $\cC$, or
\item  there is a set $X \subseteq V(G)$ of size at most $f_{\mathcal{G}}(k)$ such that $(\overrightarrow{G}-X, \g)$ has no $(\G_1,\G_2)$-non-zero cycle. 
\end{enumerate}
We say that $f_{\mathcal{G}}(k)$ is a \emph{half-integral \EP\ function for $(\G_1,\G_2)$-non-zero cycles in $\cG$}.

\begin{theorem}\label{thm: EP hi cycles}
For every integer $k$, there exists an integer $f(k)$ with the following property.  
Let $\G_1$ and $\G_2$ be  groups
and let $(\dirG, \g)$ be a $(\G_1 \oplus \G_2)$-labeled graph.  
Then, $(\dirG, \g)$ contains $k$ $(\G_1,\G_2)$-non-zero cycles such that each vertex of $(\dirG, \g)$ is in at most two of these cycles, 
or there exists a set $X$ of at most $f(k)$ vertices of $G$ such that $(\dirG-X, \g)$ does not contain any $(\G_1,\G_2)$-non-zero cycle.  
\end{theorem}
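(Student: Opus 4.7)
The plan is to proceed by induction on $k$, with the paper's generalized Flat Wall Theorem for doubly group-labeled graphs as the principal tool. For $k=1$ the statement is trivial. For the inductive step, I would first dispose of the low-treewidth regime: if $\mathrm{tw}(G)$ is bounded by a function of $k$, then a standard dynamic programme over a tree decomposition (tracking, for every separator, the realisable pairs of coordinate group-values of partial walks) yields either $k$ pairwise vertex-disjoint $(\G_1,\G_2)$-non-zero cycles or a hitting set of size bounded only in $k$.

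In the complementary case the treewidth is large, so $G$ contains a large wall, and I would apply the doubly group-labelled Flat Wall Theorem. It outputs either a highly connected non-flat substructure in which the labelling is rich enough to directly realise $k$ pairwise vertex-disjoint $(\G_1,\G_2)$-non-zero cycles, or a large flat wall together with a bounded apex set $A$ after whose removal the wall is essentially planar and its labelling is ``trivialised'' in the sense of the canonical obstructions of Section~\ref{sec:obstructions}.

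In the flat-wall outcome I would run a case analysis on the canonical obstruction. If none of the obstructions carries a $(\G_1,\G_2)$-non-zero cycle inside the wall, then every such cycle of $G$ must meet $A$ and a bounded perimeter of the wall, giving the required hitting set. If the obstruction locally admits $k$ $(\G_1,\G_2)$-non-zero cycles arranged so that each vertex lies in at most two of them --- the half-integrality phenomenon already present in Reed's Escher wall for odd cycles --- then those cycles are the required output. In the intermediate case where the obstruction exhibits a single $(\G_1,\G_2)$-non-zero cycle $C$ confined to a bounded portion of the wall, I would extract $C$, delete $V(C) \cup A$, and apply induction to the surviving large subwall with parameter $k-1$; the recursive cycles are vertex-disjoint from $V(C)$, and any vertex used twice lies in the residual graph and is counted correctly by the recursive hypothesis.

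The main obstacle is the flat-wall analysis: one must show that the doubly group-labelling on a flat wall, after removing a bounded apex set, always falls into one of a short list of canonical forms, and that each such form either admits a bounded hitting set or contains a half-integral $k$-packing of $(\G_1,\G_2)$-non-zero cycles. The interaction between the two coordinates $\G_1$ and $\G_2$ is the subtle point, since a cycle must be non-zero in \emph{both} coordinates simultaneously, and it is precisely the coordinate-wise Escher-wall analogues that force the conclusion to be half-integral rather than integral.
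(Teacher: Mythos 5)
The overall architecture---large wall, apply the doubly group-labelled Flat Wall Theorem, case-analyse the outcomes---matches the paper's strategy, but there are two genuine gaps.

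First, the low-treewidth base case does not work as described. You propose a dynamic programme over a tree decomposition ``tracking, for every separator, the realisable pairs of coordinate group-values of partial walks,'' but $\G_1$ and $\G_2$ are allowed to be arbitrary (in particular infinite) groups, so the state space of realisable group-values across a separator is not bounded by any function of $k$ and the width. The paper sidesteps this entirely: it has no separate bounded-treewidth case. Instead, Lemma~\ref{lemma: EP tangle} shows that any \emph{minimal counterexample} to $f$ being a half-integral \EP\ function already carries a tangle of large order (roughly $f(k)/3$), because in a minimal counterexample small separations must have a well-defined ``big side'' containing all the $(\G_1,\G_2)$-non-zero cycles. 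Feeding that tangle into Theorem~\ref{lemma: tangle wall} produces the large wall. This replaces your induction-plus-DP scaffold with a minimal-counterexample-plus-tangle argument and is what lets the theorem hold with an \EP\ function independent of the groups.

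Second, your handling of the flat-wall outcome is too coarse to close the argument. The paper's Theorem~\ref{thm: main wall} outputs not ``a short list of canonical forms'' abstractly, but concretely: outcome~(b.ii) gives a $\G_i$-clean $N$-linkage of size $t$ into a wall that is facially $\G_{3-i}$-non-zero, and outcome~(b.iii) gives a $(\G_1,\G_2)$-clean \emph{pair} of linkages into a null-labelled wall. In each of these the $k$ cycles are produced directly and explicitly: each linkage path is extended through distinct vertical and horizontal wall paths to reach a facial brick (or to join with a path of the other linkage), and the half-integrality bound of two is a consequence of routing the $i$-th extended path through exactly $10i$ horizontal rows so that any vertex of the wall lies on at most two of the resulting cycles. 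Your proposed alternative---extract a single cycle $C$, delete $V(C)\cup A$, recurse on a subwall with parameter $k-1$---is not the paper's argument and would need additional work: deleting $V(C)$ can sever the flat certification and the relationship between $\cT_{W}$ and the tangle of the surviving subwall, and it is not clear that the recursive hitting set composes correctly with $A$. The construction from the clean linkages avoids all of this by producing all $k$ cycles in one shot.
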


We also obtain the (full) \EP-property for $(\G_1,\G_2)$-non-zero cycles for the following restricted class of $(\G_1 \oplus \G_2)$-labeled graphs. 
For $i\in[2]$, let $Z_i$ be the set of edges of a $(\G_1 \oplus \G_2)$-labeled graph $\dirG$ that lie in a $\G_i$-zero cycle.
We say $(\dirG, \g)$ is \emph{\sparse} if the following holds for all $i \in [2]$: 
for all $\G_i$-non-zero cycles $C_1$ and $C_2$ such that 
\begin{itemize}
	\item $E(C_1) \neq E(C_2)$
	\item $\emptyset\neq E(C_1)\cap E(C_2)\subseteq Z_i$, and
	\item $C_1$ and $C_2$ have the same starting vertex,
\end{itemize}
we have $\g_i(C_1) \neq \g_i(C_2)$.
Note that this condition is easier to check if $(\G_1 \oplus \G_2)$ is abelian
because $\g(C_i)$ is independent of the starting vertex.

\begin{theorem}\label{thm: EP groups sparse}
For every integer $k$, there exists an integer $f(k)$ with the following property.  
Let $\G_1$ and $\G_2$ be  groups 
and let $(\dirG, \g)$ be a \sparse\ $(\G_1 \oplus \G_2)$-labeled graph.
Then, $(\dirG, \g)$ contains $k$ disjoint $(\G_1,\G_2)$-non-zero cycles or there exists a set of at most $f(k)$ vertices of $(\dirG, \g)$
such that $(\dirG-X, \g)$ does not contain any $(\G_1,\G_2)$-non-zero cycle.  
\end{theorem}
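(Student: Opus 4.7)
The plan is to establish Theorem \ref{thm: EP groups sparse} by extending the strategy used for Theorem \ref{thm: EP hi cycles} and exploiting the robustness hypothesis to rule out the only obstructions to the full Erdős-Pósa property.

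First, I would run the same treewidth-or-wall dichotomy that underlies Theorem \ref{thm: EP hi cycles}. If the treewidth of $G$ is bounded in terms of $k$, the problem is solved directly, since bounded treewidth allows us to either pack $k$ cycles or find a hitting set by bounded-width dynamic programming. Otherwise, apply the generalized flat wall theorem to $(\dirG, \g)$ to obtain, after removing a bounded apex set, a large flat wall $W$ together with control on how $\g_1$ and $\g_2$ label the edges of $W$.

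Second, I would produce $k$ pairwise disjoint $(\G_1, \G_2)$-non-zero cycles using the wall. The wall contains many disjoint nested cycles together with many disjoint horizontal/vertical paths; by combining them appropriately one constructs the required cycles. The only way this construction can fail is if, in some horizontal strip of the wall, every candidate cycle is $\G_i$-zero in one of the coordinates $i \in \{1,2\}$. In the half-integral proof, such a failure is absorbed by allowing a cycle to be used twice; in the integral setting we instead extract a witness for the obstruction.

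Third, I would show that the obstruction forces a configuration that violates robustness. Specifically, it produces two $\G_i$-non-zero cycles $C_1, C_2$ in the wall with $E(C_1) \neq E(C_2)$, a common starting vertex, and $\emptyset \neq E(C_1) \cap E(C_2) \subseteq Z_i$, where the shared edges lie in $\G_i$-zero cycles of the wall (these are plentiful because the label-control provided by the flat wall theorem yields disjoint $\G_i$-zero cycles through each horizontal strip). Robustness forces $\g_i(C_1) \neq \g_i(C_2)$; on the other hand the obstruction construction forces $\g_i(C_1) = \g_i(C_2)$. This contradiction shows the obstruction cannot occur, so the wall construction succeeds in producing the disjoint packing.

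The main obstacle is aligning the cycles $C_1, C_2$ produced by the obstruction analysis precisely with the hypotheses of robustness: in particular, guaranteeing that every shared edge lies in $Z_i$ and that the two cycles can be chosen with a common starting vertex so that their $\g_i$-values compare meaningfully. This is especially delicate in the non-abelian case, where $\g_i(C)$ depends on the starting vertex and the relation $\g_i(C_1) = \g_i(C_2)$ must be produced from conjugation-consistent routing around the wall. One must also coordinate the argument across both coordinates $i = 1, 2$, since the failure of the wall construction may swap between coordinates in different parts of the wall, and the reduction to robustness must fix a single $i$ for the witnessing pair.
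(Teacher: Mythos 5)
Your key insight is correct: robustness is exploited precisely by exhibiting two $\G_i$-non-zero cycles with the same $\g_i$-value whose shared edges all lie in the null-labeled flat wall (hence in $Z_i$) and which can be arranged to have a common starting vertex, contradicting the robustness definition. This is indeed what the paper does in each of the bad sub-cases of outcome (b) of the refined Flat Wall Theorem. However, there are two gaps in your proposed route to that point. First, the opening ``treewidth-or-wall dichotomy with bounded-treewidth dynamic programming'' is not what the paper does, and it is not clear it can be made to work here: for infinite groups $\G_1,\G_2$ there is no bounded state space for a DP along a tree decomposition, so the bounded-treewidth case does not reduce trivially. The paper sidesteps this entirely via Lemma~\ref{lemma: EP tangle}, which extracts a large tangle directly from a minimal counterexample $((\dirG,\g),k)$ (using minimality in $k$ on both sides of every low-order separation), and then Theorem~\ref{lemma: tangle wall} converts that tangle into a large wall. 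You would need to either supply that lemma or justify the DP step for arbitrary groups.

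Second, your characterization of ``the obstruction'' as ``a horizontal strip where every candidate cycle is $\G_i$-zero'' is too coarse and does not match what the Flat Wall Theorem (Theorem~\ref{thm: main wall}) actually delivers. The theorem yields one of three concrete outcomes after removing a bounded apex set: (b.i) a facially $(\G_1,\G_2)$-non-zero wall (which immediately gives $k$ disjoint cycles), (b.ii) a wall facially non-zero in one coordinate and null-labeled in the other, with a $\G_i$-clean $N$-linkage, or (b.iii) a fully null-labeled wall with a $(\G_1,\G_2)$-clean pair of $N$-linkages. Robustness must then be invoked separately to rule out \emph{crossing} linkages (in (b.ii) and within each half of (b.iii)) and, in the mixed-type case of (b.iii), to rule out a \emph{nested} linkage; in each remaining case (series, or two linkages of the same type) one can extend the paths through the null-labeled wall to $k$ pairwise disjoint $(\G_1,\G_2)$-non-zero cycles. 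Without this case analysis in hand, the claim that the wall construction ``succeeds'' once the obstruction is ruled out is not substantiated. Your worry about conjugation-consistency in the non-abelian case is real but resolvable: the cleanness condition already fixes $\g_i(P_1)=\g_i(P_2)$ with all paths read left-to-right, and routing both cycles through a common nail of the wall with only null-labeled edges in between makes the group values comparable with the same starting vertex.
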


Note that in Theorem \ref{thm: EP hi cycles} and \ref{thm: EP groups sparse}, the \EP\ functions do not depend on the groups $\G_1$ and $\G_2$. 
Also, if either $\G_1$ or $\G_2$ is the trivial group, then there are no $(\G_1,\G_2)$-non-zero cycles, so the \EP{} property holds trivially.
In addition, if we set $\G_1=\G_2$ and all edges have the same label in both coordinates,
then $(\G_1,\G_2)$-non-zero cycles coincide with $\G$-non-zero cycles.

We prove Theorem \ref{thm: EP hi cycles} and \ref{thm: EP groups sparse} via a structure theorem for $(\G_1 \oplus \G_2)$-labeled graphs, which
is a refinement of the Flat Wall Theorem of Robertson and Seymour \cite[Theorem 9.8]{RS95}.  Our structure theorem (Theorem \ref{thm: main wall}) is likely of independent interest.
For example, Theorem \ref{thm: main wall} provides canonical obstructions to the \EP{} property for general constrained cycles that are analogous to the \emph{Escher-walls} described by Reed \cite{Ree99}.  These obstructions are defined in Section \ref{sec:obstructions}. 

Our proof of Theorem \ref{thm: main wall} extends the techniques developed in \cite{GG09} and \cite{GGRSV09}.  We introduce a new notion of \emph{$\G$-odd clique minors} for $\G$-labeled graphs, which is useful for 
attacking problems for non-zero cycles. Our definition 
agrees with the usual notion of odd clique minors when $\G=\Z / 2 \Z$, but is weaker than the notion of a $\G$-labeled clique used in 
\cite{GG09}. Our notion is well-defined even in the case that $\G$ is infinite (note that it will be necessary to consider infinite $\G$ for some of our applications).  

The original draft of Theorem 1 and 2 stated the results under the assumption that the group $\Gamma$ is abelian; 
an early manuscript of \cite{LRS17} led us to the observation that both theorems (and their respective proofs) hold without the assumption that $\Gamma$ is abelian.

We suspect that Theorem \ref{thm: main wall} will have further applications outside those discussed in this paper.  However, as it is rather technical, we defer the statement of Theorem \ref{thm: main wall} until Section \ref{sec:flatwall} and the proof until Section \ref{sec:thmmain}. 

We instead discuss some applications of Theorem \ref{thm: EP hi cycles} and \ref{thm: EP groups sparse} in the next section. 
The rest of the paper is organized as follows.  
In Section \ref{sec:background}, we discuss some preliminaries.  
In Section \ref{sec:obstructions}, we provide a canonical set of obstructions to the \EP{} 
for $(\G_1,\G_2)$-non-zero cycles. 
In Section \ref{sec:oddkt}, 
we introduce our notion of $\G$-odd clique minors, and prove a structure theorem for $\G$-labeled graphs without a $\G$-odd clique minor.  
In Section \ref{sec:flatwall}, we state our version of the Flat Wall Theorem for $(\G_1,\G_2)$-group-labeled graphs (Theorem \ref{thm: main wall}).  
In Section \ref{sec:cleaning} and \ref{sec:walllemmas}, we prove some lemmas to be used in the proof of Theorem \ref{thm: main wall}.  
We prove Theorem~\ref{thm: main wall} in Section \ref{sec:thmmain}. 
We then easily derive Theorem \ref{thm: EP hi cycles} and \ref{thm: EP groups sparse} from Theorem \ref{thm: main wall} in Section \ref{sec:deriving} and finish with some further applications.

\section{Applications}\label{sec:applications}

In this section we illustrate the numerous corollaries of Theorem~\ref{thm: EP hi cycles} and \ref{thm: EP groups sparse}.

\textbf{Cycles.} Let $G$ be a graph and let $\dirG$ be an arbitrary orientation of $G$.  Let $e_1, \dots, e_m$ be an enumeration of $E(\dirG)$ and define $\g: E(\dirG) \to \Z \oplus \Z$ by $g(e_i)=(2^i, 2^i)$.  Note that $(\dirG, \g)$ does not contain any $\G_i$-zero cycles, and therefore $(\dirG, \g)$ is clearly \sparse{}.   Since every cycle in $G$ corresponds to a $(\G_1,\G_2)$-non-zero cycle in $(\dirG, \g)$, Theorem \ref{thm: EP groups sparse} implies the original theorem of Erd\H{o}s and P\'osa.  

\textbf{Odd Cycles.} Let $G$ be a graph.  An \emph{odd cycle} of $G$ is a cycle with an odd number of vertices.  Thomassen \cite{T88} proved that the \EP-property does not hold for odd cycles.  On the other hand, Reed \cite{Ree99} showed that the set of odd cycles has the half-integral \EP\ property.
\begin{theorem}[Reed \cite{Ree99}]\label{thm: reed}
There exists a function $f$ such that for every $k \ge 1$ and graph $G$, either $G$ contains $k$ odd cycles $C_1, \dots, C_k$ such that every vertex is contained in at most two distinct $C_i$, or alternatively, there exists a set $X \subseteq V(G)$ such that $G-X$ is bipartite and $|X|\leq f(k)$.
\end{theorem}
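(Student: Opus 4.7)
The plan is to recover Theorem \ref{thm: reed} as an immediate corollary of Theorem \ref{thm: EP hi cycles}, by encoding odd cycles as $(\Z/2\Z, \Z/2\Z)$-non-zero cycles in an appropriately labeled graph. This follows the recipe already indicated in the introduction, where the authors observe that setting $\G_1 = \G_2$ and giving every edge the same label in both coordinates reduces $(\G_1,\G_2)$-non-zero cycles to $\G$-non-zero cycles.

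First I would fix an arbitrary orientation $\dirG$ of $G$, set $\G_1 = \G_2 = \Z/2\Z$, and define $\g : E(\dirG) \to \G_1 \oplus \G_2$ by $\g(e) = (1,1)$ for every edge $e$. Because $-1 = 1$ in $\Z/2\Z$, the quantity $\g_i(e,v)$ does not depend on whether $v$ is the head or the tail of $e$, so the choice of orientation is irrelevant. For any cycle $C$ of length $\ell$ we then have $\g_i(C) = \ell \pmod 2$ for each $i \in [2]$. Hence $C$ is $(\G_1,\G_2)$-non-zero if and only if $\ell$ is odd, i.e., $C$ is an odd cycle of $G$.

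Next I would apply Theorem \ref{thm: EP hi cycles} to $(\dirG, \g)$ with parameter $k$, and take $f$ to be the function produced by that theorem. In the first outcome, we obtain $k$ $(\G_1,\G_2)$-non-zero cycles such that each vertex of $G$ is contained in at most two of them; under the above identification this is exactly a collection of $k$ odd cycles $C_1, \dots, C_k$ of $G$ with the half-integral packing property required by Theorem \ref{thm: reed}. In the second outcome, there is a set $X \subseteq V(G)$ with $|X| \le f(k)$ such that $(\dirG - X, \g)$ contains no $(\G_1,\G_2)$-non-zero cycle, and therefore $G - X$ contains no odd cycle and is bipartite.

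There is essentially no obstacle in this proof, since Theorem \ref{thm: EP hi cycles} does all of the work and the reduction is a one-line choice of labeling. The only minor point to verify is the correspondence between odd cycles and $(\G_1,\G_2)$-non-zero cycles in this particular labeling, which is immediate from $\Z/2\Z$ being orientation-insensitive. Of course, the real content lies in Theorem \ref{thm: EP hi cycles} itself and, ultimately, in the structure theorem Theorem \ref{thm: main wall}; the present statement is recovered as a clean special case.
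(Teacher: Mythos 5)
Your reduction is exactly the one the paper uses: orient $G$ arbitrarily, label every edge $(1,1)$ in $\Z/2\Z \oplus \Z/2\Z$, observe that odd cycles of $G$ correspond precisely to $(\G_1,\G_2)$-non-zero cycles, and invoke Theorem~\ref{thm: EP hi cycles}. The proposal is correct and matches the paper's proof.
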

We derive Theorem \ref{thm: reed} as a corollary of Theorem \ref{thm: EP hi cycles} as follows.  Let 
$\dirG$ be an arbitrary orientation of $G$.  
Define $\g: E(\dirG) \to \Z / 2\Z \oplus \Z / 2\Z$ by $\g(e)=(1,1)$ for all $e\in E(\dirG)$.  
Finish by observing that every odd cycle of $G$ corresponds to a $(\G_1,\G_2)$-non-zero cycle in $(\dirG, \g)$.   

\textbf{$S$-cycles.} Given a graph $G$ and a fixed subset $S \subseteq V(G)$, an \emph{$S$-cycle} is a cycle in $G$ containing at least one vertex of $S$.  Recent work of Kakimura, Kawarabayashi and Marx \cite{KKM11} and Pontecorvi and Wollan \cite{PW12} shows that the \EP\ property holds for the family of $S$-cycles.  
\begin{theorem}[\cite{KKM11, PW12}] \label{thm: Scycles}
There exists a function $f:\N\to\N$ such that for every graph $G$, every $S \subseteq V(G)$, and every positive integer $k$, either $G$ has $k$ disjoint $S$-cycles or there exists a set of at most $f(k)$ vertices intersecting every $S$-cycle in $G$.
\end{theorem}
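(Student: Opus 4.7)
The plan is to derive Theorem~\ref{thm: Scycles} as a direct corollary of Theorem~\ref{thm: EP groups sparse}, in exactly the same style as the Cycles and Odd Cycles applications above. I would orient $G$ arbitrarily to obtain $\dirG$, equip $E(\dirG)$ with a $(\mathbb{Z} \oplus \mathbb{Z})$-labeling $\gamma$ such that the $(\Gamma_1,\Gamma_2)$-non-zero cycles of $(\dirG,\gamma)$ are precisely the $S$-cycles of $G$, verify that the resulting labeled graph is \sparse{}, and then invoke Theorem~\ref{thm: EP groups sparse}.

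A natural choice of labeling, in close analogy with the Cycles application, is the following. Enumerate the edges $e_1,\dots,e_m$ of $\dirG$, set $\gamma_2(e_i):=2^i$ for every $i$, and set $\gamma_1(e_i):=2^i$ if $e_i$ is incident to some vertex of $S$ and $\gamma_1(e_i):=0$ otherwise. By the uniqueness of binary representations, any non-empty signed sum of distinct powers of two is non-zero; so every cycle of $(\dirG,\gamma)$ is $\Gamma_2$-non-zero. In particular $Z_2=\emptyset$ and the \sparse{} condition for $i=2$ is vacuous. The same principle gives $\gamma_1(C)\neq 0$ iff $C$ uses at least one $S$-incident edge, which is equivalent to $C$ being an $S$-cycle. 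Hence $(\Gamma_1,\Gamma_2)$-non-zero cycles are exactly the $S$-cycles, and the conclusion of Theorem~\ref{thm: EP groups sparse} reads as the conclusion of Theorem~\ref{thm: Scycles}.

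The only step that needs a moment's thought is the \sparse{} condition for $i=1$. The key observation is that every edge incident to $S$ lies only in $S$-cycles, which are $\Gamma_1$-non-zero; consequently no such edge belongs to a $\Gamma_1$-zero cycle, so $Z_1$ is contained in the set of edges with no endpoint in $S$. Given two distinct $\Gamma_1$-non-zero cycles $C_1,C_2$ with $\emptyset\neq E(C_1)\cap E(C_2)\subseteq Z_1$, the shared edges contribute $0$ to both $\gamma_1$-values, while the two edges of $C_1$ at any $v\in S\cap V(C_1)$ are $S$-incident and therefore not in $Z_1$, so they lie in $E(C_1)\setminus E(C_2)$. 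Thus $\gamma_1(C_1)-\gamma_1(C_2)$ is a non-empty signed sum of distinct powers of two, hence non-zero. I do not anticipate any genuine obstacle; the bulk of the derivation lies in choosing the labeling, after which the whole argument is a brief verification.
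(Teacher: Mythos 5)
Your proof is correct and follows essentially the same route as the paper: derive the result from Theorem~\ref{thm: EP groups sparse} by constructing a \sparse{} $(\mathbb{Z}\oplus\mathbb{Z})$-labeling using distinct powers of two on $S$-incident edges. The paper takes the symmetric labeling $\gamma(e_i)=(2^i,2^i)$ on $S$-incident edges and $(0,0)$ otherwise, whereas your asymmetric choice (putting $2^i$ on \emph{every} edge in the second coordinate) makes $Z_2=\emptyset$ and trivializes that half of the robustness check; both verifications go through for the same reason, namely that cycles whose shared edges lie in $Z_1$ cannot share any $S$-incident edge.
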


We derive Theorem \ref{thm: Scycles} as a corollary to Theorem \ref{thm: EP groups sparse} as follows.  Let $G$ be a graph and $S \subseteq V(G)$.  Let $\overrightarrow{G}$ be an arbitrary orientation of $G$ and let $e_1, \dots, e_m$ be an enumeration of $E(\dirG)$. Define $\g: E(\overrightarrow{G}) \to \mathbb{Z} \oplus \mathbb{Z}$ by $\g(e_i)=(2^i, 2^i)$, if $e_i$ has at least one end in $S$, and $\g(e_i)=(0,0)$, otherwise.  Let $Z_i$ be the set of edges of $(\dirG, \g)$ that are contained in a $\G_i$-zero cycle and suppose $C_1$ and $C_2$ are distinct $\G_i$-non-zero cycles with $E(C_1) \cap E(C_2) \subseteq Z_i$.  Let $C_1'$ and $C_2'$ be the corresponding cycles in $G$.  Note that $C_1'$ and $C_2'$ both meet $S$, but they do not share any edges which have an end in $S$.  Therefore, $\g_i(C_1) \notin \{\g_i(C_2), -\g_i(C_2)\}$, and so $(\dirG, \g)$ is \sparse{}.  Clearly, every $S$-cycle in $G$ corresponds to a $(\G_1,\G_2)$-non-zero cycle in $(\dirG, \g)$, which proves Theorem \ref{thm: Scycles}.  

\textbf{Cycles not homologous to zero.}  Let $G$ be a graph embedded in a (possibly non-orientable) surface $\Sigma$ and let $\mathcal{H}(\Sigma)$ be the (first) homology group of $\Sigma$.  A cycle $C$ in $G$ is \emph{not homologous to zero} if it is non-zero in $\mathcal{H}(\Sigma)$.  We use the following easy proposition. 
We give a proof at the end of Section~\ref{sec:background}.
\begin{proposition} \label{homologylabel}
Let $G$ be a graph embedded on a  surface $\Sigma$ and let $\G$ be the homology group of $\Sigma$. Then there exists an orientation $\dirG$ of $G$ and a labeling $\g:E(\dirG) \to \Gamma$ such that the group-value of every closed walk $W$ in $(\dirG, \g)$ is precisely the homology class of $W$.   
\end{proposition}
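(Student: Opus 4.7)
The plan is to produce the labeling from a spanning forest together with its fundamental cycles. First I would fix an arbitrary orientation $\dirG$ and a spanning forest $F$ of $G$. For each non-forest edge $e$, the subgraph $F+e$ contains a unique cycle $C_e$, which I would orient so that it traverses $e$ from $\tail(e)$ to $\head(e)$. The proposed labeling assigns $\gamma(e)=0$ to every forest edge and $\gamma(e)=[C_e]_\Sigma \in \Gamma$ (the homology class of $C_e$ viewed inside $\Sigma$) to every non-forest edge. This is well-defined because $\Gamma$, being a first homology group, is abelian.

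The verification then amounts to comparing two $\Gamma$-valued functionals on the cycle space $Z_1(G;\mathbb{Z})$. To any closed walk $W = v_0 e_1 v_1 \dots e_\ell v_\ell$ I would associate its signed $1$-chain $\zeta(W) = \sum_{i=1}^\ell \epsilon_i\, e_i$, where $\epsilon_i = +1$ if $v_i = \head(e_i)$ and $\epsilon_i = -1$ otherwise. Since $W$ is closed, $\zeta(W)$ is a $1$-cycle in $G$, and its homology class $[W]_\Sigma$ is by definition the image of $\zeta(W)$ under $Z_1(G;\mathbb{Z}) \to H_1(\Sigma) = \Gamma$. Expanding $\zeta(W) = \sum_{e \notin F} n_e\, \zeta(C_e)$ in the fundamental-cycle basis of $Z_1(G;\mathbb{Z})$, the coefficient $n_e$ equals the net number of times $W$ traverses $e$ in its assigned direction, because $e$ appears in exactly one fundamental cycle and there with coefficient $+1$.

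Combining these observations, on the homology side $[W]_\Sigma = \sum_{e \notin F} n_e [C_e]_\Sigma = \sum_{e \notin F} n_e \gamma(e)$, while on the group-labeling side $\gamma(W) = \sum_i \epsilon_i \gamma(e_i) = \sum_{e \in E(G)} n_e \gamma(e) = \sum_{e \notin F} n_e \gamma(e)$, since forest edges contribute nothing. The two expressions agree, which is exactly the claim. I do not anticipate any serious obstacle; the only standard input needed is that the fundamental cycles of a spanning forest form an integral basis of the cycle space of $G$, and the argument is insensitive to the genus, orientability, or cellularity of the embedding. For a non-cellular embedding one simply uses that the homology class of any $1$-cycle of $G$ remains well-defined through the inclusion $G \hookrightarrow \Sigma$.
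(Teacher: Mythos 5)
Your proof is correct, but it takes a genuinely different route from the paper's. The paper argues by induction on $|E(G)|$: given an orientation and labeling of $G-e$ that realizes homology classes of closed walks, it extends this to $G$. When $e$ is a bridge (no path joins its ends in $G-e$) it sets $\gamma(e)$ arbitrarily, since any closed walk traverses a bridge a net zero number of times and so, as $\Gamma$ is abelian, the label of $e$ cancels; otherwise it picks a path $P$ in $G-e$ joining the ends of $e$, sets $\gamma(e)=h(C)-\gamma(P)$ where $C$ is the cycle $e\cup P$, and then checks the identity by splitting an arbitrary closed walk at its traversals of $e$. Your argument is instead a direct, one-shot construction: label a spanning forest by $0$, label each non-forest edge by the homology class of its fundamental cycle, and verify by expanding the signed $1$-chain of any closed walk in the fundamental-cycle basis of $Z_1(G;\mathbb{Z})$, using that each non-forest edge appears in exactly one fundamental cycle with coefficient $+1$. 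Both arguments are sound. Yours makes the role of the integral cycle-space basis explicit and isolates the only nontrivial inputs (the basis fact and abelianness of $\Gamma$), whereas the paper's avoids homological-algebra vocabulary at the cost of a slightly more fiddly case split on walks through $e$. It is also worth noting that the spanning-forest labeling you construct is precisely the one the paper itself reinvokes later, just before Theorem~\ref{homology obstructions}, when passing from a surface embedding to an $(\mathcal{H}(\Sigma)\oplus\mathcal{H}(\Sigma))$-labeled graph; your proposal could be read as the self-contained justification for that construction.
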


  By the previous proposition, there is an orientation $\dirG$ of $G$ and a labeling $\g:E(\dirG) \to \mathcal{H}(\Sigma) \oplus \mathcal{H}(\Sigma)$, such that a cycle $C$ in $G$ is not homologous to zero if and only if the corresponding cycle in $(\dirG, \g)$ is $(\G_1,\G_2)$-non-zero.  Therefore, as a corollary to Theorem \ref{thm: EP hi cycles}, we obtain the half-integral \EP{} property for cycles not homologous to zero, which we believe is new.

\begin{theorem} \label{thm: homology}
For every integer $k$, there exists an integer $f(k)$ with the following property.  Every graph $G$ embedded in a surface $\Sigma$ either contains $k$ cycles $C_1, \dots, C_k$ such that each $C_i$ is not homologous to zero and each vertex of $G$ is contained in at most two distinct $C_i$, or there is a set of at most $f(k)$ vertices of $G$ such that every cycle of $G-X$ is homologous to zero.  
\end{theorem}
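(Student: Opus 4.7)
The proof will be a direct reduction to Theorem~\ref{thm: EP hi cycles}; the paragraph preceding the statement in fact sketches exactly what is needed. My plan is to build, from the surface embedding, a $(\G_1\oplus\G_2)$-labeling of an orientation of $G$ whose $(\G_1,\G_2)$-non-zero cycles coincide with the cycles of $G$ not homologous to zero, and then invoke Theorem~\ref{thm: EP hi cycles} verbatim.

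Concretely, I would first apply Proposition~\ref{homologylabel} to obtain an orientation $\dirG$ of $G$ and a labeling $\g_0\colon E(\dirG)\to \mathcal{H}(\Sigma)$ whose group-value on every closed walk $W$ equals the homology class $[W]$. I would then set $\G_1=\G_2=\mathcal{H}(\Sigma)$ and define $\g\colon E(\dirG)\to \G_1\oplus\G_2$ by $\g(e)=(\g_0(e),\g_0(e))$. For every cycle $C$ this gives $\g_1(C)=\g_2(C)=[C]$, so $C$ is $(\G_1,\G_2)$-non-zero in $(\dirG,\g)$ if and only if $[C]\neq 0$, that is, if and only if $C$ is not homologous to zero in $\Sigma$.

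With this labeling fixed, the conclusion follows immediately from Theorem~\ref{thm: EP hi cycles} applied to $(\dirG,\g)$ with parameter $k$: the first alternative yields $k$ cycles not homologous to zero covering every vertex at most twice, while the second yields a set $X$ of at most $f(k)$ vertices such that every cycle of $G-X$ is homologous to zero. Since the bound $f(k)$ in Theorem~\ref{thm: EP hi cycles} does not depend on the groups, the same $f$ works uniformly for all surfaces $\Sigma$. Since the heavy lifting sits entirely in Theorem~\ref{thm: EP hi cycles} and Proposition~\ref{homologylabel}, there is no real obstacle; the only points I would double-check are that the \emph{doubling} $\g_0\mapsto(\g_0,\g_0)$ really turns ``non-zero in each coordinate'' into ``non-zero homology class'' (immediate by construction) and that the equivalence between cycles of $G$ and cycles of $(\dirG,\g)$ is independent of the chosen starting vertex and direction, which follows from the paper's earlier observation that being $\G$-non-zero depends only on the edge set of a cycle.
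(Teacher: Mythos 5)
Your proposal is correct and follows exactly the argument the paper uses: apply Proposition~\ref{homologylabel} to get a $\mathcal{H}(\Sigma)$-labeling whose value on a cycle is its homology class, double it into a $(\mathcal{H}(\Sigma)\oplus\mathcal{H}(\Sigma))$-labeling, observe that $(\G_1,\G_2)$-non-zero cycles are precisely those not homologous to zero, and invoke Theorem~\ref{thm: EP hi cycles}. Nothing to add.
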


\textbf{Two constraints.} 
The real power of Theorem \ref{thm: EP hi cycles} and Theorem \ref{thm: EP groups sparse} is that we can prove \EP{} results for cycles satisfying \emph{two}
constraints. We now give some examples of this phenomenon.  The first is a recent theorem of Kawarabayashi and Kakimura~\cite{KK13}.

\begin{theorem}[Odd $S$-cycles, \cite{KK13}] \label{thm: odd S}
There exists a function $f:\N\to\N$ such that for every graph $G$, every set $S \subseteq V(G)$, and $k \ge 1$, either there exist $k$ odd $S$-cycles such that every vertex is in at most two of them, or there exists a set of at most $f(k)$ vertices intersecting every odd $S$-cycle.
\end{theorem}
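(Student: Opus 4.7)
The plan is to derive Theorem \ref{thm: odd S} as a direct corollary of Theorem \ref{thm: EP hi cycles} by encoding both constraints---being odd and being an $S$-cycle---into the two coordinates of a $(\Gamma_1 \oplus \Gamma_2)$-labeling. This mirrors the single-constraint reductions used above for odd cycles and for $S$-cycles, but combines them in the two coordinates.

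Concretely, given $G$ and $S \subseteq V(G)$, fix an arbitrary orientation $\dirG$ of $G$ and enumerate $E(\dirG) = \{e_1, \dots, e_m\}$. Set $\Gamma_1 = \mathbb{Z}/2\mathbb{Z}$ and $\Gamma_2 = \mathbb{Z}$, and define $\gamma : E(\dirG) \to \Gamma_1 \oplus \Gamma_2$ by
\[
\gamma(e_i) = \begin{cases} (1, 2^i) & \text{if } e_i \text{ has an end in } S, \\ (1, 0) & \text{otherwise.} \end{cases}
\]
I would then verify the following correspondence. A cycle $C$ in $G$ is $\Gamma_1$-non-zero in $(\dirG, \gamma)$ if and only if $C$ has odd length, since every edge contributes $1 \in \mathbb{Z}/2\mathbb{Z}$ regardless of orientation. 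For the second coordinate, observe that $C$ is an $S$-cycle if and only if $C$ contains a vertex of $S$, which (for a cycle) is equivalent to $C$ containing at least one edge with an end in $S$. Moreover, because the values $\pm 2^i$ assigned to distinct $S$-incident edges on $C$ are linearly independent over $\mathbb{Z}$ in the sense of binary expansions, the sum $\gamma_2(C)$ is non-zero precisely when at least one such edge appears on $C$. Hence $C$ is $(\Gamma_1, \Gamma_2)$-non-zero in $(\dirG, \gamma)$ if and only if $C$ is an odd $S$-cycle of $G$.

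Once this correspondence is established, Theorem \ref{thm: EP hi cycles} applied to $(\dirG, \gamma)$ yields a function $f$ (independent of $\Gamma_1, \Gamma_2$, hence independent of $G$ and $S$) such that either there exist $k$ $(\Gamma_1, \Gamma_2)$-non-zero cycles with every vertex used at most twice, which translate back to $k$ odd $S$-cycles in $G$ with every vertex used at most twice, or there exists $X \subseteq V(G)$ with $|X| \le f(k)$ such that $(\dirG - X, \gamma)$ contains no $(\Gamma_1, \Gamma_2)$-non-zero cycle, i.e., $G - X$ contains no odd $S$-cycle. This gives exactly the conclusion of Theorem \ref{thm: odd S}.

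There is essentially no hard step here: the entire content is the choice of labeling, and the main thing to check is the two equivalences above. The only minor subtlety is confirming that $\gamma_2(C) \neq 0$ whenever $C$ uses at least one $S$-incident edge, which follows from the fact that the binary representations $2^{i_1}, \dots, 2^{i_t}$ of the indices of the $S$-incident edges on $C$ are all distinct, so no signed sum of them can vanish. All the real work has already been done inside Theorem \ref{thm: EP hi cycles}; the corollary is simply a matter of translating the combinatorial constraints into group labels.
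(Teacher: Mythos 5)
Your proposal is correct and is essentially identical to the argument the paper gives: the paper uses the same labeling $\g(e_i) = (1, 2^i)$ for edges with an end in $S$ and $(1,0)$ otherwise, taking values in $\Z/2\Z \oplus \Z$, and then invokes Theorem~\ref{thm: EP hi cycles} after noting that odd $S$-cycles correspond to $(\G_1,\G_2)$-non-zero cycles. You have merely spelled out the verification (odd $\Leftrightarrow$ $\G_1$-non-zero via parity; $S$-cycle $\Leftrightarrow$ $\G_2$-non-zero via the fact that distinct powers of two cannot cancel under any choice of signs) in more detail than the paper does, but no new idea is involved.
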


We derive Theorem \ref{thm: odd S} as a corollary to Theorem \ref{thm: EP hi cycles} as follows. Let $G$ be a graph and $S \subseteq V(G)$.  Let $\overrightarrow{G}$ be an arbitrary orientation of $G$ and let $e_1, \dots, e_m$ be an enumeration of $E(\dirG)$.  Define $\g: E(\overrightarrow{G}) \to \Z / 2\Z \oplus \Z$ by $\g(e_i)=(1, 2^i)$, if $e_i$ has at least one end in $S$, and $\g(e_i)=(1,0)$, otherwise.  Theorem \ref{thm: odd S} follows by observing that every odd $S$-cycle in $G$ corresponds to a $(\G_1,\G_2)$-non-zero cycle in $(\dirG, \g)$.

Similarly, we also obtain the half-integral \EP{} property for odd cycles not homologous to zero and for $S$-cycles not homologous to zero.

\begin{theorem}[Odd cycles not homologous to zero] \label{thm: oddhomology}
For every integer $k$, there exists an integer $f(k)$ with the following property.  Every graph $G$ embedded in a surface $\Sigma$ either contains $k$ odd cycles $C_1, \dots, C_k$ such that each $C_i$ is not homologous to zero and each vertex of $G$ is contained in at most two distinct $C_i$, or there is a set of at most $f(k)$ vertices of $G$ meeting all odd cycles in $G$ that are not homologous to zero.
\end{theorem}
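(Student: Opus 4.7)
The plan is to derive Theorem \ref{thm: oddhomology} as a direct corollary of Theorem \ref{thm: EP hi cycles} by combining the group labeling used for odd cycles (Theorem \ref{thm: reed}) with the one used for cycles not homologous to zero (Theorem \ref{thm: homology}). Set $\G_1 := \Z/2\Z$ to encode parity and $\G_2 := \mathcal{H}(\Sigma)$ to encode homology class, and work in $\G_1 \oplus \G_2$.

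First I would apply Proposition \ref{homologylabel} to obtain an orientation $\dirG$ of $G$ together with a labeling $\g^{\mathcal{H}}: E(\dirG) \to \mathcal{H}(\Sigma)$ such that for every closed walk $W$ in $(\dirG, \g^{\mathcal{H}})$, the group-value $\g^{\mathcal{H}}(W)$ is precisely the homology class of $W$ in $\mathcal{H}(\Sigma)$. Using the same orientation $\dirG$, define a new labeling
\[
\g : E(\dirG) \to \Z/2\Z \oplus \mathcal{H}(\Sigma), \qquad \g(e) := (1,\, \g^{\mathcal{H}}(e)).
\]
Since every edge contributes $1 \in \Z/2\Z$ in the first coordinate, a cycle $C$ is $\G_1$-non-zero if and only if it has odd length. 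By Proposition \ref{homologylabel}, $C$ is $\G_2$-non-zero if and only if $C$ is not homologous to zero in $\Sigma$. Therefore the $(\G_1,\G_2)$-non-zero cycles of $(\dirG, \g)$ are precisely the odd cycles of $G$ that are not homologous to zero.

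Now apply Theorem \ref{thm: EP hi cycles} to $(\dirG, \g)$. In the first outcome we obtain $k$ cycles $C_1,\ldots,C_k$ which are simultaneously odd and not homologous to zero, with each vertex of $G$ lying in at most two of them; this is exactly conclusion (i) of Theorem \ref{thm: oddhomology}. In the second outcome we obtain a set $X \subseteq V(G)$ with $|X| \le f(k)$ such that $(\dirG - X, \g)$ contains no $(\G_1, \G_2)$-non-zero cycle, which translates back to saying that $G - X$ contains no odd cycle that is not homologous to zero.

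There is no genuine obstacle in this derivation; the entire content lies in choosing the correct direct-sum group and checking that both constraints (parity and homology class) are additive on closed walks, so that they are jointly captured by the $(\G_1,\G_2)$-non-zero condition. Since $\Z/2\Z$ and $\mathcal{H}(\Sigma)$ are both abelian, the $\G_i$-zero/non-zero dichotomy depends only on the edge set of the cycle, so no subtleties arise from the choice of starting vertex or orientation of $C$.
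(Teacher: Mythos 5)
Your proof is correct and takes essentially the same approach as the paper: the paper derives Theorem \ref{thm: oddhomology} in exactly this way, by combining the parity labeling (constant $1 \in \Z/2\Z$ in the first coordinate) with the homology labeling from Proposition \ref{homologylabel} in the second coordinate, and then invoking Theorem \ref{thm: EP hi cycles}.
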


\begin{theorem}[$S$-cycles not homologous to zero] \label{thm: Shomology}
For every integer $k$, there exists an integer $f(k)$ with the following property.  For every graph $G$ embedded in a surface $\Sigma$ and for all $S \subseteq V(G)$, either $G$ contains $k$ cycles $C_1, \dots, C_k$ such that each $C_i$ is an $S$-cycle not homologous to zero and each vertex of $G$ is contained in at most two distinct $C_i$, or there is a set of at most $f(k)$ vertices of $G$ meeting all $S$-cycles in $G$ that are not homologous to zero.
\end{theorem}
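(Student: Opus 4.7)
The plan is to combine the homology labeling used to derive Theorem \ref{thm: homology} with the powers-of-two labeling used to derive Theorem \ref{thm: Scycles}, obtaining a $(\G_1 \oplus \G_2)$-labeled graph in which $(\G_1,\G_2)$-non-zero cycles correspond exactly to $S$-cycles that are not homologous to zero. Theorem \ref{thm: EP hi cycles} will then finish the proof.

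More concretely, fix $G$ embedded in $\Sigma$ and $S \subseteq V(G)$. First, I would invoke Proposition \ref{homologylabel} to obtain an orientation $\dirG$ of $G$ and a labeling $\g_1 : E(\dirG) \to \mathcal{H}(\Sigma)$ such that for every closed walk $W$, the group-value $\g_1(W)$ equals the homology class of $W$. In particular, a cycle $C$ is not homologous to zero if and only if $\g_1(C) \neq 0$. Next, fix an enumeration $e_1, \dots, e_m$ of $E(\dirG)$ and define $\g_2 : E(\dirG) \to \Z$ by $\g_2(e_i) = 2^i$ if $e_i$ has at least one endpoint in $S$, and $\g_2(e_i) = 0$ otherwise. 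Finally, set $\g = (\g_1, \g_2)$, so that $(\dirG, \g)$ is a $(\mathcal{H}(\Sigma) \oplus \Z)$-labeled graph.

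It remains to verify that the $(\G_1, \G_2)$-non-zero cycles of $(\dirG, \g)$ are precisely the $S$-cycles of $G$ that are not homologous to zero. The homological coordinate has already been handled. For the $\Z$-coordinate: if $C$ is not an $S$-cycle, then no edge of $C$ is $S$-incident, so every term in the sum defining $\g_2(C)$ is $0$ and hence $\g_2(C) = 0$. Conversely, if $C$ is an $S$-cycle, let $e_j$ be the $S$-incident edge of $C$ with the largest index; then $\g_2(C) = \pm 2^j + \sum_{i < j} \varepsilon_i \, 2^i$ with $\varepsilon_i \in \{-1,0,1\}$, and since $|\sum_{i < j} \varepsilon_i 2^i| < 2^j$, we have $\g_2(C) \neq 0$. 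Thus $C$ is $\G_2$-non-zero iff it is an $S$-cycle.

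Applying Theorem \ref{thm: EP hi cycles} to $(\dirG, \g)$ with parameter $k$ therefore yields either $k$ $(\G_1,\G_2)$-non-zero cycles with every vertex in at most two of them, or a set $X \subseteq V(G)$ with $|X| \leq f(k)$ such that $(\dirG - X, \g)$ contains no $(\G_1,\G_2)$-non-zero cycle. Translating back via the equivalence above gives exactly the desired half-integral \EP{} statement for $S$-cycles not homologous to zero. There is no real obstacle here: since both coordinates of $\g$ can be chosen independently on the same orientation of $G$, the two constraints ``not homologous to zero'' and ``$S$-cycle'' are cleanly decoupled into the two coordinates of the direct sum, and the structural work has already been done in Theorem \ref{thm: EP hi cycles}.
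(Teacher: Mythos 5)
Your proposal is correct and is exactly the derivation the paper intends: combine the homology labeling of Proposition~\ref{homologylabel} in the first coordinate with the powers-of-two $S$-labeling in the second (mirroring how Theorem~\ref{thm: odd S} interleaves the $\Z/2\Z$ parity coordinate with the $\Z$ coordinate), and then apply Theorem~\ref{thm: EP hi cycles}. The verification that $\g_2(C)\neq 0$ iff $C$ is an $S$-cycle via the highest-index $S$-incident edge is the standard argument the paper relies on for the other $S$-cycle applications, so nothing is missing.
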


\textbf{Remark.} The functions in Theorem \ref{thm: homology}, \ref{thm: oddhomology}, and \ref{thm: Shomology} do not depend on the surface $\Sigma$.

We finish by giving a new example of a $(\G_1,\G_2)$-constrained cycle problem where we obtain the (full) \EP{} property.  
Let $G$ be a graph and $S_1$ and $S_2$ be subsets of vertices of $G$ (not necessarily disjoint).  An \emph{$(S_1, S_2)$-cycle} is a cycle in $G$ containing at least one vertex of $S_1$ and at least one vertex of $S_2$.  
Let $\dirG$ be an arbitrary orientation of $G$ and $e_1, \dots, e_m$ be an enumeration of $E(\dirG)$. Define $\g: E(\dirG) \to \Z \oplus \Z$ by 
$\g(e_i)=(\delta_1(e_i) 2^i, \delta_2(e_i)2^i)$, where $\delta_j(e_i)=1$ if $e_i$ has at least one end in $S_j$, and  $\delta_j(e_i)=0$ if $e_i$ does not have an end in $S_j$. 
As in the case of $S$-cycles, it is easy to check that $(\dirG, \g)$ is \sparse{}.
Since every $(S_1, S_2)$-cycle in $G$ corresponds to a $(\G_1,\G_2)$-non-zero cycle in $(\dirG, \g)$, we obtain the following theorem.

\begin{theorem}[$S_1$-$S_2$-cycles] \label{thm: s1s2 cycles}
There exists a function $f:\N\to\N$ such that for every graph $G$, every $S_1,S_2 \subseteq V(G)$, and every positive integer $k$, either $G$ has $k$ disjoint $(S_1, S_2)$-cycles or there exists a set of at most $f(k)$ vertices intersecting every $(S_1, S_2)$-cycle in $G$.
\end{theorem}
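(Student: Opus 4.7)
The plan is to encode the $(S_1,S_2)$-cycle problem as a $(\G_1,\G_2)$-non-zero cycle problem in a suitably labeled graph and then invoke Theorem \ref{thm: EP groups sparse}. First I would fix an arbitrary orientation $\dirG$ of $G$ and an enumeration $e_1,\dots,e_m$ of $E(\dirG)$, set $\G_1=\G_2=\Z$, and define $\g : E(\dirG) \to \Z \oplus \Z$ by $\g(e_i)=(\delta_1(e_i)2^i,\delta_2(e_i)2^i)$, where $\delta_j(e_i)=1$ if $e_i$ has at least one end in $S_j$ and $0$ otherwise. The output of the theorem will then be $f(k)$ from Theorem \ref{thm: EP groups sparse}, which is independent of the groups and hence depends only on $k$.

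Next I would verify that, for $i\in[2]$, the $\G_i$-zero cycles of $(\dirG,\g)$ are exactly those cycles containing no vertex of $S_i$. One direction is trivial. For the other, note that $\g_i(C)$ is a signed sum of distinct powers of $2$, namely $\sum_{e_k \in E(C),\,\delta_i(e_k)=1} \pm 2^k$, and by uniqueness of the binary representation such a sum vanishes only when empty; moreover, if $C$ passes through a vertex $v\in S_i$, then the two edges of $C$ incident with $v$ contribute, so the sum is non-empty. In particular, the $(\G_1,\G_2)$-non-zero cycles of $(\dirG,\g)$ are precisely the $(S_1,S_2)$-cycles of $G$.

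The key step is verifying that $(\dirG,\g)$ is \sparse{}. From the previous paragraph, $Z_i$ is contained in $\{e_k : \delta_i(e_k)=0\}$. Suppose $C_1,C_2$ are distinct $\G_i$-non-zero cycles with the same starting vertex and $\emptyset \neq E(C_1)\cap E(C_2) \subseteq Z_i$. Then $E(C_1)\setminus Z_i$ and $E(C_2)\setminus Z_i$ are disjoint, and each is non-empty because $C_j$ is $\G_i$-non-zero. Hence $\g_i(C_1)-\g_i(C_2)$ is a signed sum of distinct powers of $2$ with at least one term, and so it is non-zero by the same binary-representation argument. This gives $\g_i(C_1)\neq \g_i(C_2)$, as required. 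Applying Theorem \ref{thm: EP groups sparse} to $(\dirG,\g)$ and translating the dichotomy through the correspondence above yields the theorem; the only step that needs any care is the sparseness verification, and even that reduces to the elementary fact that a signed sum of distinct powers of $2$ is zero only when empty.
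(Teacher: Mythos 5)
Your proposal is correct and follows the paper's argument essentially verbatim: the same $\Z\oplus\Z$-labeling $\g(e_i)=(\delta_1(e_i)2^i,\delta_2(e_i)2^i)$, the same reduction to Theorem~\ref{thm: EP groups sparse}, and the same robustness check (which the paper dispatches by analogy with the $S$-cycle case, while you spell out the binary-representation argument in full).
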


As we have seen above, robustness is a very useful condition with many applications.  However, the `real' theorem we prove is Theorem~\ref{doubleEscher}, which provides canonical obstructions to the \EP{} property for $(\G_1,\G_2)$-non-zero cycles.  Theorem~\ref{doubleEscher} follows fairly straightforwardly from our refined Flat Wall Theorem.  

There are further applications of Theorem~\ref{doubleEscher}.
For example, we prove that cycles not homologous to zero have the \emph{full} \EP{} property for graphs embedded on an \emph{orientable} surface (Corollary~\ref{orientable homology}).
As we cannot derive these results directly from Theorem~\ref{thm: EP hi cycles} or~\ref{thm: EP groups sparse}, 
we discuss these applications in Section~\ref{sec:deriving}.

It may be possible to prove \EP{} type results for cycles satisfying more than two constraints by extending our results to $\bigoplus_{i=1}^t \G_i$-labeled graphs. 
Let $(\dirG, \g)$ be a $\bigoplus_{i=1}^t \G_i$-labeled graph.  A cycle is \emph{$(\G_1, \dots, \G_t)$-non-zero} if it is $\G_i$-non-zero for all $i$. The following conjecture would imply the half-integral \EP{} property for cycles satisfying multiple constraints.

\begin{conjecture} \label{conj: main}
Fix $t \ge 1$ a positive integer and let $\G = \bigoplus_1^t \G_i$ where each $\G_i$ is a group. 
Then the set of all $\bigoplus_1^t \G_i$-labeled graphs has the half-integral \EP\ property for $(\G_1, \dots, \G_t)$-non-zero cycles.
Moreover, the \EP-function does not depend on the choice of $\G_1, \dots, \G_t$.
\end{conjecture}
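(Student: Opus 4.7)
The plan is to prove an analogue of Theorem~\ref{thm: main wall} (the refined Flat Wall Theorem) for $\bigoplus_{i=1}^t \G_i$-labeled graphs, and then derive Conjecture~\ref{conj: main} as a corollary in the same way that Theorem~\ref{thm: EP hi cycles} is derived from Theorem~\ref{thm: main wall} in Section~\ref{sec:deriving}. The first step is to define a notion of $(\G_1,\dots,\G_t)$-odd clique minor generalizing the $\G$-odd clique minors of Section~\ref{sec:oddkt}: roughly, a clique-minor model in which every pair of branch sets is linked by internal paths realizing every combination of $\G_i$-values, so that one can freely route cycles to be $\G_i$-non-zero simultaneously for all $i$. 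Such a minor should straightforwardly produce $k$ pairwise-disjoint $(\G_1,\dots,\G_t)$-non-zero cycles, playing the role of the ``positive'' outcome of the structure theorem.

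For the proof, the natural approach is induction on $t$, with $t=2$ given by Theorem~\ref{thm: EP hi cycles}. Given a $\bigoplus_{i=1}^t \G_i$-labeled graph $(\dirG,\g)$, apply the $(t-1)$-group version to the labeling obtained by projecting away the last coordinate. If we get a hitting set $X$ for $(\G_1,\dots,\G_{t-1})$-non-zero cycles, then $X$ also hits all $(\G_1,\dots,\G_t)$-non-zero cycles and we are done. Otherwise, we obtain a large half-integral packing $\cC$ of $(\G_1,\dots,\G_{t-1})$-non-zero cycles, and the task is to refine $\cC$ into a half-integral packing of cycles that are additionally $\G_t$-non-zero. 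To this end, one applies the $t=2$ version again, this time to the auxiliary two-coordinate labeling $(\g_1+\cdots+\g_{t-1},\g_t)$, but with respect to the substructure spanned by $\cC$, so that the structure theorem delivers either the desired upgrade or a small additional hitting set. A cleaner alternative is to re-enter the proof of Theorem~\ref{thm: main wall} itself and carry the Flat Wall construction through while simultaneously tracking all $t$ coordinates, producing a flat wall in which each projection $\g_i$ is controlled.

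The main obstacle is that being $(\G_1,\dots,\G_t)$-non-zero is a conjunction of $t$ conditions, not a single non-zero condition over a direct sum. A half-integral packing of $(\G_1,\dots,\G_{t-1})$-non-zero cycles could consist entirely of $\G_t$-zero cycles, and combining two such cycles to change their $\G_t$-value need not preserve half-integrality or the $(\G_1,\dots,\G_{t-1})$-non-zero status of the result. Overcoming this will likely require the clique-minor variant to be rich enough that inside any sufficiently large $(\G_1,\dots,\G_{t-1})$-odd clique minor one can always locate a sub-model whose internal routings realize enough distinct $\G_t$-values to freely adjust cycles to be $\G_t$-non-zero. Doing this uniformly in $\G_t$, so that the final \EP{} function genuinely does not depend on any of the $\G_i$ (as required by the conjecture), is the delicate point that I expect to consume most of the work.
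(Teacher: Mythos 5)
This statement is labeled a \emph{Conjecture} in the paper, and the authors do not prove it; there is no ``paper's own proof'' against which to check your argument. Your proposal, to your credit, is not presented as a completed proof either: it is an outline of a programme (extend Theorem~\ref{thm: main wall} to $t$ coordinates, or induct on $t$ by peeling off one coordinate and re-running the two-group machinery), together with an honest identification of where that programme currently breaks down. The final sentences explicitly defer ``the delicate point'' to future work, so what you have written is a research plan, not a proof.

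Your analysis of the obstacle is essentially correct and matches why the paper leaves this as a conjecture. The naive induction fails for exactly the reason you state: being $(\G_1,\dots,\G_t)$-non-zero is a conjunction, and a large half-integral packing of $(\G_1,\dots,\G_{t-1})$-non-zero cycles carries no information about the last coordinate, so one cannot simply ``upgrade'' in a second pass without destroying disjointness or the first $t-1$ conditions. The paper's remark immediately after Conjecture~\ref{conj: main} (the $(S_1,S_2,S_3)$-cycle example on a grid) reinforces that already at $t=3$ the structure is genuinely more delicate: the natural generalization of robustness fails, which is a warning sign that the obstruction taxonomy of Theorem~\ref{doubleEscher} does not extend coordinatewise. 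The more plausible route, as you suggest, is to re-derive Theorem~\ref{thm: main wall} directly for $t$ coordinates rather than to induct on the \EP{} statement; but then the list of possible outcomes in part (b) of that theorem grows combinatorially in $t$ (which coordinates are facially non-zero, which are null-labeled, what pure type each of several linkages takes, how the intervals of their endpoints interleave), and one would also need a $t$-coordinate replacement for Lemma~\ref{lemma: odd Kt minor2}, Lemma~\ref{lemma: two odd triangles}, Lemma~\ref{lemma: non-zero brick}, and Lemma~\ref{lemma: two group wall}, each of which currently uses the two-coordinate structure in an essential way. None of that is carried out in your proposal, so as it stands the gap is the entire substance of the argument. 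In short: you have correctly diagnosed why this is hard and why it remains open, but you have not closed the gap, and the paper does not close it either.
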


\textbf{Remark.} Let $S_1, S_2$ and $S_3$ be subsets of vertices of a graph $G$.  An \emph{$(S_1,S_2,S_3)$-cycle} is a cycle in $G$ that uses at least one vertex from each of $S_1, S_2$ and $S_3$.  Note that the (full) \EP{} property does not hold for $(S_1,S_2,S_3)$-cycles.  To see this, let $G$ be a large grid, $S_1$ be the vertices on the top row, $S_2$  be the vertices on the rightmost column, and $S_3$ be the vertices on the bottom row.  Therefore, the obvious generalization of robustness does not guarantee the (full) \EP{} property for $(\G_1, \dots, \G_t)$-non-zero cycles if $t\geq 3$.  This shows that in some sense, Conjecture \ref{conj: main} is best possible.

\section{Preliminaries}\label{sec:background}

In this section, we introduce a number of concepts and notation we will need going forward.  Given two graphs $G$ and $H$, we denote by $G \cup H$ the graph with vertex set $V(G) \cup V(H)$ and edge set $E(G) \cup E(H)$.  
Analogously, we denote by $G \cap H$ the graph with vertices $V(G) \cap V(H)$ and edges $E(G) \cap E(H)$.  

\subsection{Group-labeled graphs}

In the introduction we claimed that if $C_1$ and $C_2$ are cycles such that $E(C_1)=E(C_2)$, then $C_1$ is $\G$-zero if and only if $C_2$ is $\G$-zero.   
While this is easy to see if $\G$ is abelian, this is not obvious for non-abelian groups, so we give a proof now.
Let $C=v_0 e_1 v_1 e_2 v_2 \dots e_\ell v_\ell$, where $v_0=v_\ell$.  Suppose that $C_1$ is $\G$-zero. Observe that
\begin{align*}
    0 &= \gamma(e_1, v_1)+ \dots + \gamma(e_\ell, v_\ell) \\
      &= -(\gamma(e_1, v_1)+ \dots + \gamma(e_\ell, v_\ell)) \\
      &= ((-\gamma(e_\ell, v_\ell)) + \dots + (-\gamma(e_1, v_1))) \\
      &= \gamma(e_\ell, v_{\ell-1}) + \dots + \gamma(e_1, v_0).
\end{align*}

Thus, the cycle starting at $v_0$ but with the opposite orientation as $C$ also has group-value $0$.  Furthermore, 
\begin{align*}
    0 &= \gamma(e_1, v_1)+ \dots + \gamma(e_\ell, v_\ell) \\
      &= -\gamma(e_1, v_1)+ (\gamma(e_1, v_1)+ \dots + \gamma(e_\ell, v_\ell)) + \gamma(e_1,v_1) \\
      &= \gamma(e_2, v_2) + \dots + \gamma(e_\ell, v_\ell)+\gamma(e_1,v_1).
\end{align*}
Thus, the cycle with the same orientation as $C$ but starting at $v_1$ also has group-value $0$.  By induction, every cycle with the same edge set as $C$ also has group-value $0$, as claimed.

In a slight abuse of notation, we will use $C$ to also refer to the graph with vertex set $V(C)$ and edge set $E(C)$, depending on the context.  

Let $v \in V(G)$, $\alpha \in \Gamma$, and $(\overrightarrow{G}', \gamma')$ be the $\Gamma$-labeled graph obtained from $(\overrightarrow{G}, \gamma)$ by adding $\alpha$ (on the right) to the labels
of the edges with head $v$ and adding $-\alpha$ (on the left) to the labels of the edges with tail $v$.  We refer to this operation as
\emph{shifting at $v$} (\emph{by $\alpha$}).  
A $\G$-labeled graph that can be obtained from $(\overrightarrow{G}, \gamma)$ via a sequence of shifts is said to be
\emph{shifting-equivalent} to $(\overrightarrow{G}, \gamma)$.  
We note that a key property of shifting is that it does not change the set of $\G$-non-zero cycles (if $\G$ is abelian, then the group-value of each cycle is actually unaltered by shifting).
In addition, the robustness of a group-labeling is maintained when performing shifts.

\subsection{Minors in graphs and the Flat Wall Theorem}

A \emph{separation} in a graph $G$ is a pair $(A, B)$ where $A$ and $B$ are edge-disjoint subgraphs of $G$ such that $A \cup B = G$.  The \emph{order} of a separation $(A, B)$ is $|V(A) \cap V(B)|$.  
We say that $(A,B)$ is a \emph{$k$-separation} if it has order at most $k$.  
The separation is \emph{trivial} if $V(A) \subseteq V(B)$ or $V(B) \subseteq V(A)$.

Let $k \ge 1$ be a positive integer and let $G$ be a graph.  A \emph{tangle of order $k$} in $G$ is a set $\zT$ of $(k-1)$-separations $(A, B)$ which satisfy the following.
\begin{enumerate}[(T1)]
\item  For every $(k-1)$-separation $(A, B)$ of $G$, either $(A, B) \in \zT$ or $(B, A) \in \zT$;
\item $V(A) \neq V(G)$ for all $(A, B) \in \zT$; 
\item $A_1 \cup A_2 \cup A_3 \neq G$ for all $(A_1, B_1), (A_2, B_2), (A_3, B_3) \in \zT$.
\end{enumerate}

If $(A,B) \in \cT$, we call $A$ the \emph{$\cT$-small} side of the separation.  
A separation and a tangle in a group-labeled graph $(\dirG, \g)$ is simply a separation and a tangle in $G$, respectively.

An example of a tangle which we will use going forward is the following tangle defined on a clique.  

\begin{lemma}[\cite{RS91}] \label{cliquetangle}
Let $t=\lceil \frac{2n}{3} \rceil$ and $\zT$ be the set of all $(t-1)$-separations $(A,B)$ of $K_n$ such that $V(B)=V(K_n)$.  Then $\zT$ is a tangle.  
\end{lemma}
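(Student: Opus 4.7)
The plan is to verify each of the three tangle axioms (T1), (T2), (T3) for the proposed set $\cT$. The underlying observation driving all three verifications is that $K_n$ is complete, so every pair of vertices is joined by an edge, and this strongly restricts what a low-order separation can look like. Throughout I will use the bound $t-1 < 2n/3$, which follows from $t=\lceil 2n/3\rceil$.

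For (T1), I would first show that any separation $(A,B)$ of $K_n$ with $|V(A)\cap V(B)|<n$ is in fact trivial, meaning $V(A)\subseteq V(B)$ or $V(B)\subseteq V(A)$: if instead there were $u\in V(A)\setminus V(B)$ and $v\in V(B)\setminus V(A)$, then the edge $uv\in E(K_n)$ would lie neither in $A$ (since $v\notin V(A)$) nor in $B$ (since $u\notin V(B)$), contradicting $A\cup B=K_n$. Since any $(t-1)$-separation satisfies $t-1<n$, this triviality applies, and combined with $V(A)\cup V(B)=V(K_n)$ it forces $V(A)=V(K_n)$ or $V(B)=V(K_n)$; in the first case $(B,A)\in\cT$, in the second $(A,B)\in\cT$. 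For (T2), if $(A,B)\in\cT$ then $V(A)\subseteq V(B)=V(K_n)$, so $|V(A)\cap V(B)|=|V(A)|\le t-1<n$, hence $V(A)\neq V(K_n)$.

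The heart of the argument is (T3), and it is where the threshold $2n/3$ enters the picture. Suppose, toward a contradiction, that $(A_1,B_1),(A_2,B_2),(A_3,B_3)\in\cT$ satisfy $A_1\cup A_2\cup A_3=K_n$. Define the complementary vertex sets $U_i:=V(K_n)\setminus V(A_i)$. Since $|V(A_i)|\le t-1 < 2n/3$, each $|U_i|>n/3$, and therefore $|U_1|+|U_2|+|U_3|>n$. By pigeonhole some vertex $u$ lies in at least two of the $U_i$, say $u\in U_1\cap U_2$. Since $|U_3|>n/3\ge 1$ (for $n\ge 3$), we may choose $v\in U_3\setminus\{u\}$. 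Then for every $i$, at least one of $u,v$ lies outside $V(A_i)$, so the edge $uv\in E(K_n)$ lies in no $E(A_i)$, contradicting $A_1\cup A_2\cup A_3=K_n$.

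The main (and essentially only) obstacle is step (T3); once one has the right pigeonhole bookkeeping it is short, but it is exactly the pigeonhole argument on the sets $U_i$ that explains why the bound $t=\lceil 2n/3\rceil$ is the natural one. Handling very small $n$ is the one routine edge case: for $n\le 2$ the statement is vacuous or trivial, and for $n\ge 3$ the argument above goes through.
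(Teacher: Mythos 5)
Your proof is correct, and it is the standard argument for this fact (the paper itself does not reprove the lemma, deferring to Robertson and Seymour): establish that every low-order separation of $K_n$ is trivial via the missing-edge argument, then use a pigeonhole count on the complements $U_i=V(K_n)\setminus V(A_i)$ to rule out a cover by three small sides. One small remark: in the pigeonhole step you want $|U_1|+|U_2|+|U_3|>n$ to produce a vertex $u$ lying in two of the $U_i$, and then an edge $uv$ avoided by all three $A_i$; your choice $v\in U_3\setminus\{u\}$ works because $|U_i|>n/3$ forces $|U_3|\ge 2$ once $n\ge 3$, and you already flag $n\le 2$ as a trivial edge case. The argument also explains exactly why $\lceil 2n/3\rceil$ is the right threshold, which is a nice bonus that a bare citation would not give.
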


We now describe several additional tangle constructions.  Let $\zT$ be a tangle of order $k$ in $G$ and let $\ell \leq k$.  Define $\zT'$ to be the set of $\ell$-separations $(A,B)$ such that $(A,B) \in \zT$.  It is immediate that $\zT'$ is also a tangle, which we call a \emph{truncation} or \emph{restriction} of $\zT$.  Note that $\zT'$ is a truncation of $\zT$ if and only if $\zT' \subseteq \zT$.  

Let $G$ be a graph and $e \in E(G)$.  We let $G \setminus e$ be the graph with vertex set $V(G)$ and edge set $E(G) \setminus \{e\}$.  We say $G \setminus e$ is the graph obtained from $G$ by \emph{deleting $e$}.  On the other hand, $G / e$ is the graph obtained from $G$ by deleting $e$ and then identifying the endpoints of $e$.  We say $G / e$ is the graph obtained from $G$ by \emph{contracting $e$}.  If $x \in V(G)$, we let $G -x$ be the graph obtained from $G$ by deleting $x$ and deleting all edges that have $x$ as an endpoint.  Note that deletion and contraction of edges commute and do not depend on the order in which they are performed.  Thus, if $C$ and $D$ are disjoint subsets of edges, we let $G / C \setminus D$ be the graph obtained from $G$ by deleting all edges in $D$ and contracting all edges in $C$.   

A graph $H$ is a \emph{minor} of a graph $G$ if a graph isomorphic to $H$ can be obtained from $G$ by deleting edges, contracting edges, and deleting vertices.  Equivalently, $H$ is a minor of $G$ if 
there is a function $\pi$ from $V(H)\cup E(H)$ to subgraphs of $G$
such that
\begin{enumerate}[(i)]
    \item $\pi(v)$ is a tree and a subgraph of $G$ and $\pi(v)$ is disjoint from $\pi(u)$ for distinct $u, v \in V(H)$; and 
    \item $\pi(uv)$ is an edge in $G$ joining $\pi(u)$ and $\pi(v)$ for all $uv\in E(H)$.
\end{enumerate}
We say that $\pi$ is an $H$-\emph{model} in $G$.

We next describe how to obtain a tangle in a graph from a tangle in one of its minors.  Let $H$ be a minor of $G$ and suppose that $\zT$ is a tangle of order $k \geq 2$ in $H$.  Let $C$ and $D$ be disjoint subsets of edges and $X \subseteq V(G)$ be such that $(G / C \setminus D)-X$ is isomorphic to $H$. If $A$ is a subgraph of $G$ we let $C_A:=C \cap A$, $D_A:=D \cap A$ and $X_A:=X \cap A$.  
Define $\zT_H$ to be the set of $(k-1)$-separations $(A,B)$ of $G$ such that $((A /C_A \setminus D_A)-X_A ,(B /C_B \setminus D_B) - X_B) \in \zT$.  It follows (see \cite{RS91}) that $\zT_H$ is a tangle in $G$.   We say that $\zT_H$ is the tangle in $G$ \emph{induced} by $\zT$.  If $K$ is a clique-minor in $G$ we always let $\zT_K$ denote the tangle in $G$ induced by the tangle in $K$ from Lemma~\ref{cliquetangle}.

If $\zT$ is tangle of order $k$ in $G$ and $X$ is a subset of vertices of size at most $k-2$, then it is easy to show that there is
a unique block $U$ of $G-X$ such that $V(U) \cup X$ is not contained in any $\zT$-small side.  
We call $U$ the \emph{$\zT$-large block} of $G-X$.  

Tangles are extremely useful objects in graph structure theory.  
For example, they arise in a natural way when considering the \EP-property. Let $\Gamma_1$ and $\Gamma_2$ be  groups, $\cal{G}$ a set of $(\Gamma_1 \oplus \Gamma_2)$-labeled graphs, and $f:\N\to\N$ a function.  A pair $((\dirG, \gamma), k)$ is a \emph{minimal counterexample to $f$ being an \EP{} function} if it satisfies the following conditions:
\begin{enumerate}[(MC1)]
\item $(\dirG, \gamma)$ is an element of $\cal{G}$;
\item\label{item:MC2} there does not exist a set $X \subseteq V(G)$ with $|X| \le f(k)$ such that $(\dirG - X, \gamma)$ has no $(\G_1,\G_2)$-non-zero cycle, nor does $(\dirG, \gamma)$ contain $k$ disjoint $(\G_1,\G_2)$-non-zero cycles; 
\item for all $(\dirG', \gamma') \in \cal{G}$ and for all $k' < k$, the graph $(\dirG', \gamma')$ either has $k'$ disjoint $(\G_1,\G_2)$-non-zero cycles or there exists $X' \subseteq V(\dirG')$ with $|X'| \le f(k')$ such that $(\dirG' - X', \gamma')$ has no $(\G_1,\G_2)$-non-zero cycle.
\end{enumerate}
The definition for a pair $((\dirG, \gamma), k)$ being a minimal counterexample to $f$ being a half-integral \EP{} function is analogous.

The following result is Lemma 2.1 of \cite{Wol11}, rephrased in terms of $(\G_1,\G_2)$-group-labeled graphs.  
We include the proof for completeness.

\begin{lemma}[Lemma 2.1 \cite{Wol11}]\label{lemma: EP tangle}
Let $\Gamma_1$, $\Gamma_2$ be  groups and $\cal{G}$ a set of $(\Gamma_1 \oplus \Gamma_2)$-labeled graphs.  Let $f: \N \rightarrow \N$ be a function that is not an \EP{} function (not a half-integral \EP{}-function) for $(\G_1,\G_2)$-non-zero cycles.  Let $((\dirG, \gamma), k)$ be a minimal counterexample to $f$ being an \EP{} function (half-integral \EP{}-function). Let $t \in \mathbb{N}$ be such that $t \leq f(k) -2f(k-1)$ and $t \leq f(k) / 3$, and let $\zT$ be the set of all $(t-1)$-separations $(A,B)$ of $(\dirG, \gamma)$ such that $B$ contains a $(\Gamma_1, \Gamma_2)$-non-zero cycle.  Then $\zT$ is a tangle of order $t$ in $(\dirG, \gamma)$.
\end{lemma}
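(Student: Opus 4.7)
The plan is to verify each of the three tangle axioms (T1), (T2), and (T3) in turn, by contradiction: if one of them fails, I would combine (MC3) applied to an appropriate subgraph of $\dirG$ with the failure to produce either $k$ pairwise disjoint $(\G_1,\G_2)$-non-zero cycles in $(\dirG,\gamma)$ or a hitting set of size at most $f(k)$, in each case contradicting (MC2). The two numerical hypotheses $t\le f(k)-2f(k-1)$ and $t\le f(k)/3$ are chosen precisely so that the arithmetic in the three verifications closes.

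For (T1), given a $(t-1)$-separation $(A,B)$, suppose neither side contains a $(\G_1,\G_2)$-non-zero cycle. Then every such cycle of $\dirG$ must meet the separator $V(A)\cap V(B)$, because a cycle disjoint from the separator lies entirely on one side of $(A,B)$. Hence $V(A)\cap V(B)$, of size at most $t-1<f(k)$, is a hitting set for $(\G_1,\G_2)$-non-zero cycles of $\dirG$, contradicting (MC2). For (T2), suppose $(A,B)\in\mathcal{T}$ with $V(A)=V(G)$; then $V(B)$ coincides with the separator, so $|V(B)|\le t-1$ and in particular the non-zero cycle $C$ inside $B$ guaranteed by $(A,B)\in\mathcal{T}$ satisfies $|V(C)|\le t-1$. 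Applying (MC3) at $k-1$ to $(\dirG-V(C),\gamma)$ gives either $k-1$ pairwise disjoint non-zero cycles, which together with $C$ yield $k$ such cycles in $\dirG$, or a hitting set $X'\subseteq V(\dirG)\setminus V(C)$ of size at most $f(k-1)$, in which case $X'\cup V(C)$ is a hitting set in $\dirG$ of size at most $f(k-1)+(t-1)<f(k)$ (using $t\le f(k)-2f(k-1)$); both outcomes contradict (MC2).

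For (T3), suppose $A_1\cup A_2\cup A_3=G$ with each $(A_i,B_i)\in\mathcal{T}$, and set $S_i=V(A_i)\cap V(B_i)$ and $Y=S_1\cup S_2\cup S_3$, so $|Y|\le 3(t-1)$. The key structural observation is that every $(\G_1,\G_2)$-non-zero cycle $C$ disjoint from $Y$ must lie entirely inside some $A_i$: otherwise $C$ lies in $B_j\setminus A_j$ for each $j$, which forces $E(C)$ to be disjoint from $E(A_1)\cup E(A_2)\cup E(A_3)=E(G)$, a contradiction. For each $i$, fix a non-zero cycle $C_i\subseteq B_i$ and apply (MC3) at $k-1$ to $(A_i-S_i,\gamma)$. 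In the packing alternative for some $i$, the $k-1$ disjoint non-zero cycles obtained inside $A_i-S_i$ avoid $V(B_i)$ and hence are vertex-disjoint from $C_i$, producing $k$ disjoint non-zero cycles in $\dirG$ and contradicting (MC2). In the hitting-set alternative for all three indices, the union of the three hitting sets with $Y$ covers every non-zero cycle of $\dirG$, and the bounds on $t$ force its size to be at most $f(k)$, again contradicting (MC2).

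The main obstacle is (T3), where the bookkeeping must be arranged so that the contributions of the separators and of the hitting sets returned by (MC3) together satisfy both $t\le f(k)-2f(k-1)$ and $t\le f(k)/3$; the structural lever that makes this work is the identity $V(A_i-S_i)\cap V(B_i)=\emptyset$, which couples the packing alternative for $A_i-S_i$ with the non-zero cycle $C_i$ inside $B_i$ and simultaneously ensures that the three separator sets $S_i$ absorb all non-zero cycles of $\dirG$ that escape the three hitting sets.
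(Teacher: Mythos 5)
Your verifications of (T1) and (T2) are sound, though (T2) can be done more directly. The (T3) step, however, has a genuine arithmetic gap. In the all-hitting-set alternative you produce a cover $Y\cup X_1\cup X_2\cup X_3$ of size at most $3(t-1)+3f(k-1)$ and assert that the bounds on $t$ force this to be at most $f(k)$; this does not follow. From $t\le f(k)/3$ one gets only $3(t-1)+3f(k-1)\le f(k)-3+3f(k-1)$, and from $t\le f(k)-2f(k-1)$ one gets only $3(t-1)+3f(k-1)\le 3f(k)-3f(k-1)-3$; neither is bounded by $f(k)$ in general. Concretely, if $f(k-1)=10$, $f(k)=100$, $t=33$, your cover can have size $3\cdot 32+30=126>100$.

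The paper avoids this by first upgrading your (T1) observation to a full dichotomy: for every $(t-1)$-separation $(A,B)$, \emph{exactly} one of $A$ and $B$ contains a $(\G_1,\G_2)$-non-zero cycle. The ``not both'' half is where the hypothesis $t\le f(k)-2f(k-1)$ earns its keep: if both sides contained a non-zero cycle, applying (MC3) at $k-1$ to $A-V(B)$ and to $B-V(A)$ yields hitting sets $X_A$, $X_B$ of size at most $f(k-1)$ each, and $X_A\cup X_B\cup\bigl(V(A)\cap V(B)\bigr)$ meets every non-zero cycle while having size at most $2f(k-1)+t-1\le f(k)-1$. Once this dichotomy is in hand, (T1) and (T2) are immediate, and in (T3) every $\cT$-small side $A_i$ contains no non-zero cycle, so $Y=\bigcup_i S_i$ \emph{by itself} meets every non-zero cycle of $G$; then $|Y|\le 3(t-1)<f(k)$ uses only $t\le f(k)/3$ and requires no further appeal to (MC3). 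Your sets $X_i$ in (T3) are therefore both unnecessary and unaffordable; the fix is to prove the ``not both'' direction of the dichotomy (you already have ``not neither'') before touching (T3).
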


\begin{proof}
We prove the statement for when $f$ is not an \EP{} function; the proof for the case when $f$ is not a half-integral \EP{} function is identical.

Let $(A, B)$ be a $(t-1)$-separation in $(\dirG, \gamma)$.   We claim that exactly one of $A$ or $B$ contains a $(\G_1, \G_2)$-non-zero cycle.  If neither $A$ nor $B$ contains a $(\G_1, \G_2)$-non-zero cycle, then $V(A) \cap V(B)$ is a set intersecting every such cycle, a contradiction to (MC2).  
Assume now that both $A$ and $B$ contain a $(\G_1, \G_2)$-non-zero cycle.  
Then neither of the $(\G_1 \oplus \G_2)$-labeled graphs $A - V(B)$ and $B-V(A)$ contain $k-1$ disjoint $(\G_1, \G_2)$-non-zero cycles.  
By (MC3), 
there exist vertex sets $X_A$ and $X_B$ each of size at most $f(k-1)$ which intersect every $(\G_1, \G_2)$-non-zero cycle in $A - V(B)$ and $B-V(A)$, respectively.  Thus $X = X_A \cup X_B \cup (V(A) \cap V(B))$ is a set intersecting every $(\G_1, \G_2)$-non-zero cycle in $(\dirG, \gamma)$, a contradiction to (MC2).  
We conclude that exactly one of $A$ and $B$ contains a $(\G_1, \G_2)$-non-zero cycle, as claimed.

It follows that (T1)~and (T2)~hold for $\zT$.  To see that (T3)~holds as well, assume there exist separations $(A_1, B_1)$, $(A_2, B_2)$, and $(A_3, B_3)$ in $\zT$ with $(\dirG, \gamma)  = A_1 \cup A_2 \cup A_3$.  Since $A_i$ contains no $(\G_1, \G_2)$-non-zero cycle, every cycle in $A_1 \cup A_2 \cup A_3=(\dirG, \g)$ must meet $X := \bigcup_{i=1}^3 (V(A_i) \cap V(B_i))$. Since $X$  is of size at most $3t-3$, we contradict (MC2).  This completes the proof.   
\end{proof}

Theorem \ref{thm: main wall} is a refinement of the Flat Wall Theorem of Robertson and Seymour \cite[Theorem 9.8]{RS95}.  We first need some definitions and notation before we can state the Flat Wall Theorem.

\begin{figure}[t]
\centering
\begin{tikzpicture}[scale=0.7]
\def\height{1}
\def\width{1}
\def\sizeofwalla{6}
\def\sizeofwallb{6}

\foreach \i in {2,...,\sizeofwallb}{
	\draw[very thick] (0,\i-1)--(2*\sizeofwalla+1,\i-1);
}
\draw[very thick] (1,0)--(2*\sizeofwalla+1,0);
\draw[very thick] (0,\sizeofwallb)--(2*\sizeofwalla,\sizeofwallb);

\pgfmathsetmacro{\bh}{\sizeofwallb/2-1}
\foreach \i in {0,1,...,\sizeofwalla}{
	\foreach \j in {0,1,...,\bh}{
		\draw[very thick] (2*\i+1,2*\j)--(2*\i+1,2*\j+1);
	}
}

\foreach \i in {0,1,...,\sizeofwalla}{
	\foreach \j in {0,1,...,\bh}{
		\draw[very thick] (2*\i,2*\j+1)--(2*\i,2*\j+2);
	}
}

\foreach \i in {1,...,\sizeofwalla}{
	\draw[very thick] (2*\i-1,\sizeofwallb) circle (0.2);
}

\foreach \i in {0,1,...,\bh}{
	\draw[very thick,white,dashed] (3,2*\i)--(3,2*\i+1);
	\draw[very thick,white,dashed] (2,2*\i+1)--(2,2*\i+2);
}
\foreach \i in {2,...,\sizeofwallb}{
	\draw[very thick,white,dashed] (2,\i-1)--(3,\i-1);
}

\draw[very thick] (-0.2+1,-0.2) rectangle (1.2,0.2);
\draw[very thick] (-0.2+2*\sizeofwalla+1,-0.2) rectangle (0.2+2*\sizeofwalla+1,0.2);
\draw[very thick] (-0.2+2*\sizeofwalla,-0.2+\sizeofwallb) rectangle (0.2+2*\sizeofwalla,0.2+\sizeofwallb);
\draw[very thick] (-0.2,-0.2+\sizeofwallb) rectangle (0.2,0.2+\sizeofwallb);

\end{tikzpicture}
\caption{An elementary $6$-wall $W$.
The nails of $W$ on the top row are circled, the corners are marked by squares, and the second vertical path is marked dashed.}\label{fig: wall}
\end{figure}
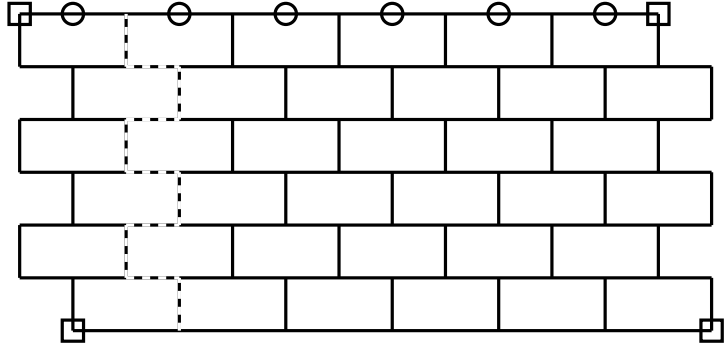

We first define an elementary $r$-wall.  An \emph{elementary $6$-wall} is shown in Figure~\ref{fig: wall}.  
Let  $r,s\ge2$ be an  integer.  An $r\times s$-grid is the graph with vertex set $[r]\times[s]$ in which
$(i,j)$ is adjacent to $(i',j')$ if and only if $|i-i'|+|j-j'|=1$.
An \emph{elementary $r$-wall} is obtained from the  
$2(r+1)\times (r+1)$-grid
by deleting all edges with ends $(2i-1,2j-1)$ and $(2i-1,2j)$
for all $i\in[r]$ and $j\in [\lfloor r/2\rfloor]$
and all edges with ends  $(2i,2j)$ and $(2i,2j+1)$
for all $i\in [r]$ and $j\in [\lfloor (r-1)/2\rfloor]$
and then deleting the two resulting vertices of degree 1.
Every facial cycle of a finite face has length $6$, and is called a \emph{brick}.  
The four vertices that are the intersection of a leftmost or rightmost vertical path
with a topmost or bottommost horizontal path are called the 
 \emph{corners} of the wall. We denote the vertices of degree $2$ which are not corners as the \emph{nails} of the wall.

A subdivision of an elementary $t$-wall is called a \emph{$t$-wall} or simply a \emph{wall}.  The \emph{bricks} of a wall are the subdivided 6-cycles corresponding to the bricks of the elementary wall.  Similarly, the \emph{corners} and \emph{nails} of a wall are the vertices corresponding to the corners and nails of the elementary wall before subdividing edges.   

Sometimes we tacitly assume that a wall is embedded in the plane as shown in Figure~\ref{fig: wall}.  The \emph{outercycle} or \emph{boundary cycle} of the wall is the facial cycle of the infinite face.   Let $W$ be a $t$-wall.  Let $v_1$, $v_2$, $v_3$, and $v_4$ be the four corners of the wall as they occur in that clockwise order on the boundary cycle $C$ with $v_1$ the vertex corresponding to the corner $(1, 2)$ of the elementary $t$-wall.  
Let $P_0^{(h)}$ be the subpath of $C$ with endpoints $v_1$ and $v_2$ which is disjoint from $\{v_3, v_4\}$ and let $P_{t}^{(h)}$ be the subpath of $C$ with ends $v_3$ and $v_4$ which is disjoint from $\{v_1, v_2\}$.  Observe that there is a unique set of disjoint paths $P_{0}^{(v)}, \dots, P_{t}^{(v)}$ such that each path has one end in $V(P_0^{(h)})$, the other end in $V(P_t^{(h)})$, and no other vertex in $V(P_0^{(h)} \cup P_t^{(h)})$.  We call $P_{0}^{(v)}, \dots, P_{t}^{(v)}$ the \emph{vertical paths} of $W$.  
There is also a unique set of disjoint paths $P_{1}^{(h)}, \dots, P_{t-1}^{(h)}$, where each has one end in $P_{0}^{(v)}$, the other end in $P_{t}^{(v)}$, and no other vertex in $P_{0}^{(v)} \cup P_{t}^{(v)} \cup P_0^h \cup P_t^h$.  The paths $P_0^{(h)}, \dots, P_t^{(h)}$ are the \emph{horizontal paths} of the wall.  Note that if we are just given $W$ as a graph, then the nails and corners are not necessarily uniquely defined; in such cases, we assume that the nails and corners are arbitrarily chosen.

Next, we define a subwall of a $t$-wall $W$. Let $s\leq t$. 
 An \emph{$s$-subwall} of $W$ is a subgraph $W'$ of $W$ such that $W'$ is an $s$-wall and there exists a choice of corners of $W'$ so that every horizontal path of $W'$ is a subpath of a horizontal path of $W$ and every vertical path of $W'$ is a subpath of a vertical path of $W$.  
Let $I^{(h)},I^{(v)}\subseteq  \{0, \dots, t\}$ with $|I^{(h)}|=|I^{(v)}|=s+1$ and such that every horizontal path of $W'$ is a subpath of $P^{(h)}_j$ for some $j \in I^{(h)}$ and every vertical path of $W'$ is a subpath of $P^{(v)}_j$ for some $j \in I^{(v)}$.  
We say that $W'$ is \emph{$k$-contained} in $W$ if $\min \{I^{(h)},I^{(v)}\}\geq k$ and $\max \{I^{(h)},I^{(v)}\}\leq t-k$.

We observed that there may be several possible choices for the set of nails of a wall $W$.  
However, if $W'$ is a 1-contained subwall of $W$, there are choices of nails $N'$ for $W'$ which are more natural than others;  
that is, we assume that
$$N'\subseteq\{v\in V(C'): d_{W'}(v)=2 \text{ and } d_{W}(v)=3\} \setminus \{v_1', \dots, v_4'\},$$
where $C'$ is the outercycle of $W'$ and $v_1', \dots, v_4'$ are the corners of $W'$.  
We say that $N'$ is the set of \emph{nails of $W'$ with respect to $W$}.

Let us turn to the definitions needed for the Flat Wall Theorem.

\begin{definition}
Let $G$ be a graph, $X \subseteq V(G)$, and $(C,D)$ be a separation of $G$ such that 
\begin{enumerate}[(i)]
\item 
$|V(C \cap D)| \leq 3$, 
\item
$X \subseteq V(C)$, and
\item
there is a family of $|V(C \cap D)|$ paths from a vertex $d \in D - C$  to $X$ that are disjoint except for $d$.  
\end{enumerate}
Let $H$ be the graph obtained from $C$ by adding exactly those edges such that $C \cap D$ is a clique.  
We say that $H$ is an \textbf{elementary $X$-reduction} of $G$ (with respect to $(C,D)$).
An \textbf{$X$-reduction} of $G$ is a graph that can be obtained from $G$ via a sequence of elementary $X$-reductions. 
\end{definition}

\begin{definition}\label{def: flat wall}
Let $G$ be a graph and let $W$ be a wall in $G$ with outercycle $O$.  Let $(A,B)$ be a separation of $G$ such that
\begin{enumerate}[(i)]
\item $V(A \cap B) \subseteq V(O)$, and $V(W) \subseteq V(B)$, 
\item there is a choice of nails and corners of $W$ such that every nail and corner is in $A$, and 
\item there is a $V(A \cap B)$-reduction $G_0$ of $B$ such that $G_0$ can by drawn in a disk $\Delta$ with all vertices of $A \cap B$ drawn on the boundary
of $\Delta$ according to their order on $O$.  
\end{enumerate}
In this case we say that the wall $W$ is \textbf{flat} in $G$.  The separation $(A, B)$ \textbf{certifies} that $W$ is flat.
\end{definition}

Let $W$ be a $t$-wall.  We now describe a tangle $\zT$ of order $t+1$ inside $W$ as follows.  Let $\zT$ consist of the set
of all $t$-separations $(A,B)$ such that $B$ contains a horizontal path.  The set $\zT$ 
is indeed a tangle (see \cite{RS91}).  If $W$ is a minor of $G$, we always let $\zT_W$ denote the tangle in $G$ induced by the tangle in $W$ described above.  We can now state the Flat Wall Theorem.  

\begin{theorem}[Theorem 1.5 \cite{KTW12}] \label{flatwalltheorem}
Let $r,t \in \mathbb{N}, R:=49152t^{24}(40t^2+r)$, $G$ be a graph, and $W$ be an $R$-wall in $G$.  Then either $G$ has a $K_t$-minor $K$ such that
$\zT_K$ is a restriction of $\zT_W$, or there exists a subset $Z$ of vertices of $G$ of size at most $12288t^{24}$ and an $r$-subwall $W'$ of $W$ such that
$V(W') \cap Z$ is empty and $W'$ is flat in $G-Z$. 
\end{theorem}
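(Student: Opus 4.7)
The plan is a dichotomy on the \emph{bridges} of $W$ in $G$ — the connected subgraphs of $G - E(W)$ together with their attachments on $V(W)$ — arguing that either the bridges interact across $W$ in a sufficiently tangled way to route a $K_t$-minor inside the wall region, or most attachments are local enough that an $r$-subwall sits in a planar piece after deleting a small apex set. The large polynomial constants $49152 t^{24}$ and $12288 t^{24}$ arise from iterated pigeonholing over subwalls and Menger-type arguments.

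The first step is to partition $W$ into roughly $40 t^2$ pairwise vertex-disjoint subwalls, each of size proportional to $t^{22} r$; this explains why the hypothesis only requires $R$ of order $t^{24}(40t^2+r)$. For each bridge $B$, examine its attachment set on $V(W)$: by a Menger-type argument, if $B$ either attaches to many subwalls or has attachments that interleave on the boundary cycle of a single subwall, then $B$ supplies a rich internal linkage between distant parts of $W$. Call such a bridge \emph{non-local}, and say two non-local bridges \emph{cross} if their attachments interleave on the outer cycle $O$.

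The dichotomy is now as follows. If many crossing pairs of non-local bridges exist, distributed across sufficiently many of the $40t^2$ subwalls, then one combines the internal linkages of these bridges with the horizontal and vertical paths of $W$ to produce $t$ branch sets that are pairwise joined by an edge, yielding a $K_t$-model $\pi$. Crucially, one arranges the construction so that every branch set of $\pi$ meets the $\zT_W$-large block; then any $(\lceil 2t/3 \rceil -1)$-separation $(A,B)$ with $B$ the $\zT_K$-large side necessarily contains a horizontal path of $W$, which forces $\zT_K$ to be a restriction of $\zT_W$. If instead few crossings and few non-local bridges exist, collect all attachment vertices of the bad bridges into a set $Z$ of size at most $12288 t^{24}$; by the size of $W$, some $r$-subwall $W' \subseteq W$ avoids $Z$, and every bridge of $W$ in $G-Z$ meeting $W'$ attaches locally and non-crossingly. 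Witnessing flatness as in Definition~\ref{def: flat wall} then amounts to taking $A \cap B \subseteq V(O')$, placing $W'$ together with all its local bridges into $B$, and absorbing each small attachment cutset into a clique via the $V(A\cap B)$-reduction; what remains is drawable in a disk with the cut vertices on the boundary in the cyclic order inherited from $O'$.

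The main obstacle is the $K_t$-construction case: routing $\binom{t}{2}$ pairwise disjoint paths between prospective branch sets while guaranteeing $\zT_K$ refines $\zT_W$ requires repeatedly using the grid structure of $W$ to disentangle crossings, and each disentangling round costs a polynomial factor in $t$ (which is where the $t^{24}$ bound ultimately comes from). Keeping all branch sets inside the $\zT_W$-large region throughout — so that none escapes to a small side of some separation in $\zT_W$ — is the most delicate step, since a naive linkage can easily produce a $K_t$-minor whose tangle is incompatible with $\zT_W$. Handling this requires always rerouting linkages along wall paths rather than through bridges whenever possible, and the bookkeeping for this is what drives the explicit constants.
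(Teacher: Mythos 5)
The paper does not prove this statement; it is imported verbatim as Theorem~1.5 of \cite{KTW12} and used as a black box (see the reference immediately preceding the theorem statement). So there is no ``paper's own proof'' to compare against. That said, your sketch has genuine gaps that would prevent it from being completed into a proof along the lines you describe.

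The most serious gap is in the ``few bad bridges'' branch of your dichotomy. You collect attachment vertices of non-local and crossing bridges into $Z$, find an $r$-subwall $W'$ avoiding $Z$, and then assert that ``every bridge of $W$ in $G-Z$ meeting $W'$ attaches locally and non-crossingly'' and that this yields flatness via Definition~\ref{def: flat wall}. This inference is false as stated. Flatness requires that some $V(A\cap B)$-reduction of $B$ embeds in a disk, and an elementary $X$-reduction only cuts off pieces behind a separation of order at most~$3$. A bridge that attaches locally to a single brick of $W'$ with, say, four attachment vertices and is internally highly connected and nonplanar survives every elementary reduction and destroys disk-embeddability, yet it is ``local'' and need not ``cross'' anything in your sense. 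The actual argument (in \cite{RS95} and \cite{KTW12}) does not classify bridges merely by where they attach; it works with \emph{crosses} over a society: if a bridge with $\geq 4$ attachments cannot be absorbed, one extracts a cross on a suitable cycle, and a collection of many pairwise disjoint crosses is shown (quantitatively) to yield a $K_t$-minor. The case where no cross exists is handled by the structure theory of cross-free societies (the ``two paths'' theorem), which is what actually delivers the disk embedding. Your sketch omits this entirely and substitutes an unsupported equivalence between local/non-crossing attachment and planarity.

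Two further points are asserted rather than argued. First, going from ``many crossing pairs of non-local bridges distributed across the $40t^2$ subwalls'' to a $K_t$-model is itself a nontrivial quantitative lemma (disjoint crosses $\Rightarrow$ clique minor); ``combining internal linkages with horizontal and vertical paths'' does not explain how the $\binom{t}{2}$ pairwise disjoint connecting paths are routed. Second, the claim that one can ``arrange the construction so that every branch set of $\pi$ meets the $\zT_W$-large block,'' forcing $\zT_K$ to refine $\zT_W$, is exactly the delicate point you yourself flag, but you give no mechanism for it beyond ``rerouting along wall paths whenever possible.'' In the actual proof the tangle compatibility falls out of the fact that the clique minor is built entirely from material controlled by the wall tangle, and this has to be tracked through the cross-extraction argument, not bolted on afterwards.
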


Thus, loosely speaking, every large wall $W$ in a graph $G$ can be used to construct a large clique minor, or alternatively, we can delete a bounded size set of vertices such that a large subwall of $W$ induces a planar subgraph of $G$ up to $1$-, $2$-, and $3$-sums.  

Observe that a subwall of a flat wall is once again flat and thus we also speak about \emph{flat subwalls}.

Let $(\dirG, \g)$ be a $\G$-labeled graph. 
We abuse terminology slightly and call a subgraph $(\dirW, \g)$ of $(\dirG, \g)$  a \emph{wall} if $W$ is a wall in $G$.  
We naturally extend all terminology for walls in graphs to
walls in group-labeled graphs.  

We say that $(\dirG, \g)$ is \emph{null-labeled} if for all $e \in E(\dirG)$, we have $\g(e)=0$.  
We say that $(\dirG, \g)$ is \emph{$\G$-bipartite} if every cycle of $(\dirG, \g)$ is $\G$-zero. If $\G=\G_1 \oplus \G_2$ , then $(\dirG, \g)$ is \emph{$\G_i$-bipartite} if every cycle of $(\dirG, \g_i)$ is $\G_i$-zero.

We will repeatedly use the following basic lemma \cite{GG09} without explicit reference. 

\begin{lemma}[\cite{GG09}]
Let $\Gamma$ be a group and $(\dirG, \g)$ be a $\Gamma$-labeled graph.  If $(\dirG, \g)$ is $\G$-bipartite, then $(\dirG, \g)$ is
shifting-equivalent to a null-labeled graph.  
\end{lemma}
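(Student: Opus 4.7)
The plan is a standard spanning-tree argument. I will reduce to the case where $G$ is connected (handling each component independently). Having chosen a spanning tree $T$ of $G$ rooted at some vertex $r$, I will list the tree edges in BFS order $e_1,\dots,e_{n-1}$, so that each $e_i=u_iv_i$ with $u_i$ the parent of $v_i$ in $T$. I will then process the tree edges in order: at step $i$, I will perform a single shift at $v_i$ that zeroes the label of $e_i$. If $v_i=\head(e_i)$ the required shift is by $-\g(e_i)$, giving new label $\g(e_i)+(-\g(e_i))=0$; if $v_i=\tail(e_i)$ the required shift is by $\g(e_i)$, giving new label $-\g(e_i)+\g(e_i)=0$. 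In either case such a shift exists.

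The crucial observation is that a shift at $v_i$ alters only the labels on edges incident with $v_i$. In $T$ the only such edges are $e_i$ itself and the tree edges from $v_i$ to its children, all of which appear strictly later in the BFS order. Hence any tree edge zeroed at an earlier step is left untouched, and after the $n-1$ shifts every edge of $T$ carries the label $0$.

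Finally, I will use the $\G$-bipartite hypothesis to deal with the non-tree edges. For each non-tree edge $e$ with ends $u,v$, consider the fundamental cycle $C_e$: starting at $u$, traverse $e$ to $v$, then return to $u$ along the unique $u$--$v$ path in $T$. Because shifting preserves the set of $\G$-non-zero cycles (as noted just after the definition of a walk), the shifted labeling is still $\G$-bipartite, so $\g(C_e)=0$. Every tree edge on $C_e$ now contributes $0$, so the only contribution is $\g(e,v)\in\{\g(e),-\g(e)\}$; this must therefore be $0$, and hence $\g(e)=0$. Thus every edge of $\dirG$ ends up with label $0$, i.e.\ the resulting labeling is null.

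The main obstacle is purely bookkeeping. One must be careful that the processing order never resurrects a tree edge that has already been zeroed, and one must correctly translate the "shift at $v$ by $\alpha$" operation into the update on each of the two possible orientations of an incident edge (including, if any, loops at $v$, which can be treated separately since they form cycles and so have label $0$ in a $\G$-bipartite graph to begin with). Both issues are handled by the BFS ordering and by the two-case verification above, so no further difficulty arises.
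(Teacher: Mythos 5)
Your proof is correct and is the canonical spanning-tree argument; the paper gives no proof of its own (it simply cites \cite{GG09}), and your argument is exactly the expected one. The BFS ordering guarantees each shift zeroes one new tree edge without disturbing earlier ones, the invariance of the set of $\G$-non-zero cycles under shifting means the shifted graph remains $\G$-bipartite, and the vanishing of each fundamental cycle then forces every non-tree edge (including loops and parallel edges) to carry label $0$, with the non-abelian case handled correctly since all other terms in the cycle's group value are already the identity.
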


We say that a $\G$-labeled graph $(\dirH, \g_H)$ is a \emph{minor} of another $\G$-labeled graph $(\dirG, \g_G)$ if $(\dirH, \g_H)$ can be obtained from $(\dirG, \g_G)$ via any combination of
edge deletions, vertex deletions, shifts, and contracting null-labeled edges.  Note that the definition of contracting an edge does not change the orientations of any other edge. In other words, if we contract a null-labeled edge $e$ to obtain a new vertex $v_e$, and $f$ is an edge in $(\dirG, \g)$ with $\tail_{\dirG}(f)$ (respectively, $\head_{\dirG}(f)$) equal to an end of $e$ in $\dirG$, then $\tail_{\dirG / e}(f)=v_e$ (respectively, $\head_{\dirG / e}(f)=v_e)$).

\subsection{Graphs and homology}

As promised, we finish this section by proving Proposition~\ref{homologylabel}.
\begin{proof}[Proof of Proposition~\ref{homologylabel}]
Let $G$ be a graph embedded on a surface $\Sigma$ and $\G$ be the homology group of $\Sigma$. We proceed by induction on $|E(G)|$.  For a closed walk $W$ in $G$, we let $h(W)$ denote the homology class of $W$.  Let $e=uv \in E(G)$.  By induction, there is an orientation $\overrightarrow{G-e}$ of $G-e$ and a labeling $\g:E(\overrightarrow{G-e}) \to \G$ such that $\g(W)$ is the homology class of $W$ for all closed walks $W$ in $G-e$. We orient $uv$ from $u$ to $v$.  If there is no path from $v$ to $u$ in $G-e$ we let $\g(uv)$ be an arbitrary element of $\G$.  
Otherwise, we choose a path $P$ in $G-e$ from $v$ to $u$, and we let $C$ be the cycle in $G$ obtained by following $P$ and then $uv$.  We then define $\g(uv)$ to be $h(C)-\g(P)$. Suppose we are in the first case.  Let $W$ be a closed walk in $G$.  We may assume that $uv \in W$, otherwise there is nothing to prove. 
We may also assume that $W$ begins and ends at $u$.  Thus, $W=uvW_1vuW_2uv \dots W_{2k-1}vuW_{2k}$, where each $W_i$ is a closed walk in $G-uv$ (possibly $W_i$ is a walk of length $0$).  By induction, we have $\g(W_i)=h(W_i)$ for all $i\in[k]$.  Finish by observing that $\g(W)=\sum_{i=1}^{2k} \g(W_i)=\sum_{i=1}^{2k} h(W_i)=h(W)$.  In the second case, let $W$ be a closed walk in $G$ with $uv \in W$.  It suffices to consider the case that $W=uvQ$, where $Q$ is a walk from $v$ to $u$ in $G-uv$.  Let $W'=(uvP)(P^{-1}Q)$, where $P$ is the path used to define $\g(uv)$.  Since $P^{-1}Q$ is a walk in $G-uv$, by induction we have $h(P^{-1}Q)=-\g(P)+\g(Q)$.  Thus,
$
h(W)=h(W')=h(uvP)+h(P^{-1}Q)=\g(uv)+\g(P)-\g(P)+\g(Q)=\g(W).
$
\end{proof}

\section{Obstructions} \label{sec:obstructions}

In this section we introduce some canonical counterexamples to the \EP{} property for $(\G_1,\G_2)$-non-zero cycles that appear in Theorem \ref{doubleEscher}.  

In his proof of Theorem \ref{thm: reed}, Reed \cite{Ree99} proves a significantly stronger statement.  
He defines a set of canonical counterexamples to the \EP~property for odd cycles called \emph{Escher-walls} and then shows that any graph either has many disjoint odd cycles, or a bounded set of vertices intersecting every odd cycle, or it contains a large Escher-wall.  
An \emph{Escher-wall of height $h$}, 
is a graph obtained from a bipartite $h$-wall $W$ by adding $h$ disjoint paths $P_1, \dots , P_h$ such that for all $i \in [h]$,

\begin{enumerate}[(i)]
\item 
$P_i$ has one end in the $i$-th brick of the top horizontal path of $W$ and the other end in the
$(h-i+1)$-th brick of the bottom horizontal path of $W$, but otherwise contains no other vertices of $W$, and
\item
$W \cup P_i$ contains an odd cycle.
\end{enumerate}

It is easy to show that an Escher-wall $G$ of height $h$ does not contain two disjoint odd cycles nor a set $X \subseteq V(G)$ with $|X| < h$ and $G-X$ bipartite.  Reed \cite{Ree99} showed that Escher-walls
are the only obstructions to the \EP{} property for odd cycles. 

\begin{theorem}[\cite{Ree99}] \label{escherwalls}
There exists a function $f:\N\to\N$
such that for every graph $G$ and every integer $k$,
\begin{enumerate}[(i)]
	\item $G$ contains $k$ disjoint odd cycles,
	\item there exists a set $X$ such that $G-X$ is bipartite and $|X|\leq f(k)$, or
	\item $G$ contains an Escher-wall of height $k$.
\end{enumerate}
\end{theorem}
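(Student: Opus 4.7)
The plan is to derive Theorem~\ref{escherwalls} by specializing the canonical-obstructions version of our main theorem (the forthcoming Theorem~\ref{doubleEscher}, which in turn is extracted from the refined Flat Wall Theorem~\ref{thm: main wall}) to the same group-labeling used to derive Reed's half-integral result, Theorem~\ref{thm: reed}. Explicitly, fix an arbitrary orientation $\dirG$ of $G$ and define $\g: E(\dirG) \to \Z/2\Z \oplus \Z/2\Z$ by $\g(e) = (1,1)$ for every edge $e$. Under this labeling, a cycle of $G$ is $(\G_1,\G_2)$-non-zero iff it is odd, so the three possible conclusions we want for $G$ correspond exactly to: $k$ disjoint $(\G_1,\G_2)$-non-zero cycles, a bounded hitting set for such cycles, or a canonical $(\G_1,\G_2)$-obstruction.

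Assuming outcomes (i) and (ii) both fail, a minimal counterexample in the sense of (MC1)--(MC3) carries a tangle of arbitrarily high order by Lemma~\ref{lemma: EP tangle}. I would then feed this tangle into Theorem~\ref{thm: main wall}. Its conclusion splits into two cases. In the first, we obtain a large $\G$-odd clique minor (in the sense of Section~\ref{sec:oddkt}); such a minor translates, via the standard routing trick inside a clique, into $k$ vertex-disjoint odd cycles, contradicting the failure of~(i). In the second case, we obtain a large flat wall equipped with a controlled collection of ``non-zero jumps'' certifying that the wall cannot be trivialized by a small deletion set -- this is precisely the combinatorial content of the canonical obstruction from Theorem~\ref{doubleEscher}.

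The crux is to check that, because $\g_1 = \g_2$ under our labeling, the generically two-coordinate obstruction degenerates into Reed's Escher-wall. Specifically, a shift of the flat wall $W$ makes it null-labeled in both coordinates (so the underlying wall is bipartite), and the prescribed jumps become disjoint paths $P_1,\dots,P_k$ in $G$ with endpoints on the outer boundary of $W$ such that each $W \cup P_i$ has odd $\g_1$-value, i.e.\ contains an odd cycle. The placement of the endpoints of the $P_i$ on the top/bottom horizontal paths of $W$ in the reverse-matched pattern ``$i \leftrightarrow h-i+1$'' required by the Escher-wall definition is forced by the planarity of the $V(A\cap B)$-reduction in Definition~\ref{def: flat wall}: two such paths whose endpoints do not satisfy the reverse-matching condition would be forced either to cross (yielding two disjoint odd cycles and hence, together with the rest of the wall, an iteration argument producing $k$ disjoint odd cycles) or to be routable through a bounded set (contradicting minimality).

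The main obstacle I anticipate is this last bookkeeping step: converting the abstract ``$k$ non-zero jumps across a flat bipartite wall'' supplied by Theorem~\ref{thm: main wall} into exactly the nested/antinested path pattern demanded by the Escher-wall definition. This will require a Menger-type linking lemma inside the flat disk, combined with a rerouting argument that preserves the parity of $W \cup P_i$ while relocating the endpoints to successive top and bottom bricks. A standard inductive clean-up -- passing to subwalls and shrinking $k$ whenever a ``crossing'' pair of paths is discovered so as to extract a pair of disjoint odd cycles -- should handle the non-nested configurations and, after at most $k$ rounds, leave us either with $k$ disjoint odd cycles (case (i)) or with the desired Escher-wall (case (iii)).
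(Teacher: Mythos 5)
Your route is essentially the paper's: in both cases Theorem~\ref{escherwalls} is extracted from Theorem~\ref{doubleEscher} (which itself rests on Theorem~\ref{thm: main wall}). The paper derives it by first proving Theorem~\ref{oddSobstructions} with the labeling $\g(e_i)=(1,2^i)\in \Z/2\Z \oplus \Z$ and then setting $S=V(G)$, which kills outcomes (iv) and (v) of Theorem~\ref{oddSobstructions}; you instead apply Theorem~\ref{doubleEscher} directly with $\g(e)=(1,1)\in(\Z/2\Z)^2$. Your argument for why outcomes (iii) and (v) of Theorem~\ref{doubleEscher} degenerate is correct: since $\g_1=\g_2$ and the $\G_i$-zero/non-zero status of a cycle is shift-invariant, a cycle is $\G_1$-zero iff it is $\G_2$-zero, which rules out both a wall that is $\G_i$-bipartite but facially $\G_{3-i}$-non-zero and a path that closes up to a $\G_1$-zero/$\G_2$-non-zero cycle through a null-labeled wall. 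Both your labeling and the paper's land on the same object: a bipartite $600k$-wall together with a \emph{crossing} $N$-linkage of $k$ odd jumps on the top nails.

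One genuine confusion in your last paragraph, though: you describe the Escher-wall as a ``nested/antinested'' pattern and suggest an inductive clean-up that ``shrinks $k$ whenever a crossing pair is discovered so as to extract a pair of disjoint odd cycles.'' This is backwards. Read the Escher-wall attachment points cyclically around the outer cycle of the wall — top bricks $1,\dots,h$ then bottom bricks $h,\dots,1$ — and the paths $P_i$ join cyclic position $i$ to cyclic position $h+i$; that is a crossing family. It is precisely the crossing configuration that prevents two disjoint odd cycles (any two odd cycles through crossing jumps must intersect inside the planar disk), whereas two jumps in series or nested \emph{do} yield two disjoint odd cycles through the bipartite wall. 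So there is no clean-up round to run: outcome (iv) of Theorem~\ref{doubleEscher} already delivers the crossing family, and the only remaining work is the genuinely small rerouting step — sliding the $2k$ attachment points from the top row to the prescribed $i \leftrightarrow h-i+1$ bricks on the top and bottom rows of a $k$-subwall, preserving the parity of each $W\cup P_i$. The paper treats this last translation as immediate (``we obtain an independent proof\ldots'') and does not spell it out either, so acknowledging it as a routing lemma inside the wall is fair; just be aware that the dichotomy feeding your proposed induction is inverted.
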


We will prove a similar result (Theorem \ref{doubleEscher}) for $(\G_1,\G_2)$-non-zero cycles in Section \ref{sec:deriving}, which in fact will imply Theorem \ref{escherwalls}. 
 
We begin with some definitions.  
Let $t \ge 1$ be an integer and $W$ be a $t$-wall with a set of nails $N$.  Let $P_j^{(h)}$ and $P_j^{(v)}$ for $0 \le j \le t$ be the horizontal and vertical paths of $W$.  For any subwall $W'$ of $W$ with set of nails $N'$, let $j$ be the minimum index such that $P_j^{(h)}$ intersects $W'$, and let $P$ be the horizontal path of $W'$ contained in $P_j^{(h)}$.  We define the \emph{top nails} of $W'$ to be the linearly ordered set of vertices $\overline{N}$ consisting of the nails of $W'$ contained in $P$ such that $x, y \in \overline{N}$ satisfy $x \le y$ if and only if we encounter $x$ before $y$ when traversing $P_j^{(h)}$ from its endpoint in $P_0^{(v)}$ to its endpoint in $P_t^{(v)}$.

Let $G$ be a graph.  
For $X,Y \subseteq V(G)$ an \emph{$X$--$Y$-path} is a path $P$ with at least one edge such that one end of $P$ is in $X$ the other end is in $Y$, and $V(P)$ is otherwise disjoint from $X \cup Y$. 
An \emph{$X$-path} is an $X$--$X$-path.
An \emph{$X$-linkage} (or simply a \emph{linkage}) is a family of disjoint $X$-paths in $G$.  For a subgraph $H$ of $G$, a \emph{$H$-linkage} is a set of pairwise disjoint $V(H)$-paths which are edge-disjoint from $H$.

Suppose in addition that the vertex set $X$ is a linearly ordered set.  
If $P$ is an $X$-path, we call the smaller endpoint of $P$ the \emph{left endpoint}, and the larger endpoint of $P$ the \emph{right endpoint}.  
Let $x,y\in X$ with $x\leq y$. 
We let $[x,y]$ denote the set of all $z\in X$ such that $x\leq z\leq y$, and call $[x,y]$ an \emph{interval}.  
We write $[x,y] < [z,w]$ if $y<z$.  
For a family $\cP$ of disjoint $X$-paths, 
we define $I_{\cP}\subseteq X$ as the minimal interval (under inclusion) containing all endpoints of paths in $\cP$. We let  
$I_{\cP}^{\ell}$ and $I_{\cP}^{r}$ be the minimal intervals containing all the left and right endpoints of $\cP$, respectively.

In the following we tacitly assume that $X$-paths are always traversed from their left endpoint to their right endpoint.  Let
 $(p_1,p_2)$ and $(q_1,q_2)$ be pairs of elements of $X$ such that $p_1<q_1$, $p_1<p_2$, and $q_1<q_2$.
We say that $(p_1,p_2)$ and $(q_1,q_2)$ are \emph{in series} if $p_2<q_1$;
\emph{nested} if $p_1 < q_1 < q_2 < p_2$;
or \emph{crossing} if $p_1 < q_1 < p_2 < q_2$.

Let $P$ and $Q$ be disjoint $X$-paths with ends $(p_1,p_2)$ and $(q_1,q_2)$, respectively. 
We say that $P$ and $Q$ are \emph{in series, nested,} or \emph{crossing}, according as $(p_1,p_2)$ and $(q_1,q_2)$ are in series, nested, or crossing.
A collection $\cP$ of $X$-paths is \emph{in series, nested,} or \emph{crossing} if all pairs of paths in $\cP$ are in series, nested, or crossing, respectively.  We say that $\cP$ is \emph{pure}
if $\cP$ is nested, crossing, or in series. Thus, there are three \emph{types} of pure linkages. 
See Figure~\ref{fig: linkage} for illustration.
\begin{figure}[t]
\centering
\begin{tikzpicture}[scale=0.7]
\def\height{1}
\def\width{1}

\begin{scope}[shift={(9,0)}]
\draw[very thick] (-1,0)--(7,0);

\draw[very thick] (0,0)..controls (1,3) and (5,3) .. (6,0);
\draw[very thick] (1,0)..controls (2,2) and (4,2) .. (5,0);
\draw[very thick] (2,0)..controls (2.6,1) and (3.4,1) .. (4,0);

\foreach \i in {0,1,2,4,5,6}{
	\fill (\i,0) circle (0.18);
}	
\end{scope}

\begin{scope}
\draw[very thick] (-1,0)--(7,0);

\draw[very thick] (0,0)..controls (1,2) and (3,2) .. (4,0);
\draw[very thick] (1,0)..controls (2,2) and (4,2) .. (5,0);
\draw[very thick] (2,0)..controls (3,2) and (5,2) .. (6,0);

\foreach \i in {0,1,2,4,5,6}{
	\fill[very thick] (\i,0) circle (0.18);
}	
\end{scope}

\begin{scope}[shift={(5,-3)}]
\draw[very thick] (-1,0)--(6.2,0);

\draw[very thick] (0,0)..controls (0.3,1) and (0.9,1) .. (1.2,0);
\draw[very thick] (2,0)..controls (2.3,1) and (2.9,1) .. (3.2,0);
\draw[very thick] (4,0)..controls (4.3,1) and (4.9,1) .. (5.2,0);

\foreach \i in {0,1.2,2,3.2,4,5.2}{
	\fill[very thick] (\i,0) circle (0.18);
}	
\end{scope}

\end{tikzpicture}
\caption{Linkages of size $3$ that are crossing, nested, and in series, respectively.
The six endpoints of the paths are linearly ordered from left to right.}\label{fig: linkage}
\end{figure}
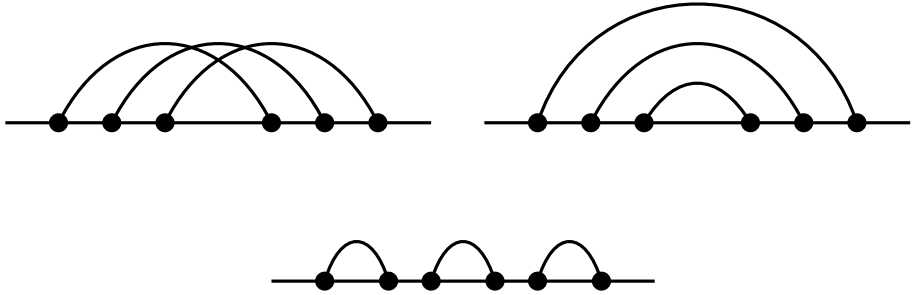

We now describe our set of obstructions. 
Let $\G_1$ and $\G_2$ be non-trivial  groups.  
Let $h$ be a positive integer and let $(\dirG, \g)$ be a $(\G_1 \oplus \G_2)$-labeled graph  be such that
\begin{enumerate}[(i)]
\item 
$E(G)=E(W) \cup E(\cP) \cup E(\cQ)$, where $W$ is a $4h$-wall, and $\cP$ and $\cQ$ are pure $W$-linkages of size 
$h$ such that the endpoints of the paths in $\cP$ or  $\cQ$ are vertices in the topmost horizontal path of $W$ of degree $2$ in $W$, 
no two of which lie in the same brick of $W$,
and are not corners of $W$,
\item $(\dirW, \g)$ is null-labeled,
\item $\cP\cup \cQ$ is a linkage of size $2h$,
\item $\cP$ and $\cQ$ are of different type,
\item $\g_1(P)\not=0$ and $\g_2(P)=0$ for all $P\in\cP$,
\item $\g_1(Q)=0$ and $\g_2(Q)\not=0$ for all $Q\in\cQ$,
	\item if $\cP$ is crossing or nested, then $\g_1(P_1)=\g_1(P_2)$ for all $P_1,P_2\in\cP$,
	\item if $\cQ$ is crossing or nested, then $\g_2(Q_1)=\g_2(Q_2)$ for all $Q_1,Q_2\in\cQ$,
	\item If $I_\cP \cap I_\cQ = \emptyset$, then $I_\cP < I_\cQ$ and otherwise if $I_\cP \cap I_\cQ \neq \emptyset$, then $I^\ell_\cP < I^\ell_\cQ < I^r_\cP < I^r_\cQ$ and neither $\cP$ nor $\cQ$ is in series.
\end{enumerate}

It is not difficult to see that $(\dirG, \g)$ does not contain two disjoint $(\G_1,\G_2)$-non-zero cycles.
However, one needs more than $h$ vertices to cover all $(\G_1,\G_2)$-non-zero cycles in $(\dirG, \g)$.   

\textbf{Remark.} Note that if $\cP$ is nested and $\cQ$ is in series, then $(\dirG, \g)$ is planar.  
Thus, $(\G_1,\G_2)$-non-zero cycles do not have the (full) \EP{} property even when restricted to $(\G_1 \oplus \G_2)$-labeled graphs $(\dirG, \g)$ for which $G$ is planar.

\section{$\G$-Odd $K_t$-models} \label{sec:oddkt}

Let $\G$ be a group and $(\dirG, \g)$ be a $\G$-labeled graph.  A \emph{$K_t$-model} $\pi$ in $(\dirG, \g)$ is simply a $K_t$-model in $G$. 
That is, $\pi$ is a function mapping $V(K_t)\cup E(K_t)$ to subgraphs of $(\dirG, \g)$ such that
\begin{enumerate}[(i)]
	\item $\{\pi(v)\}_{v\in V(K_t)}$ is a set of pairwise disjoint trees in $(\dirG, \g)$, and
	\item for all $uv\in E(K_t)$, $\pi(uv)$ is an edge of $(\dirG, \g)$ joining $\pi(u)$ and $\pi(v)$.  
\end{enumerate}
A $K_t$-model in $(\dirG, \g)$ is \emph{$\G$-odd} if it additionally satisfies
\begin{enumerate}
	\item[(iii)] for all distinct $x,y,z\in V(K_t)$,
	the unique cycle of $(\dirG, \g)$ in $\pi(x)\cup \pi(y)\cup \pi(z)\cup \pi(xy)\cup \pi(xz)\cup \pi(yz)$ is $\G$-non-zero.
\end{enumerate}
Observe that if $\G=\mathbb{Z}_2$, then $(\dirG, \g)$ has a $\G$-odd $K_t$-model if and only if $(\dirG, \g)$ has an odd-$K_t$-minor in the sense of \cite{GGRSV09}.

Next, we extend this definition to the case when $G$ is a $(\G_1 \oplus \G_2)$-labeled graph. 
A \emph{$(\G_1,\G_2)$-odd $K_t$-model} in $(\dirG, \g)$ is a function $\pi$ mapping $V(K_t)\cup E(K_t)$ to subgraphs of $(\dirG, \g)$
such that 
\begin{enumerate}[(i)]
	\item $\{\pi(v)\}_{v\in V(K_t)}$ is a set of pairwise disjoint trees in $(\dirG, \g)$,
	\item for all $uv\in E(K_t)$, $\pi(uv)$ is a set of either one or two edges of $(\dirG, \g)$
	such that for all $e\in \pi(uv)$, $e$ joins $\pi(u)$ and $\pi(v)$, and
	\item for all $i \in [2]$ and distinct $x,y,z\in V(K_t)$,
	there are $e_{xy}\in \pi(xy),e_{xz}\in \pi(xz),e_{yz}\in \pi(yz)$
	such that the unique cycle of $\pi(x)\cup \pi(y)\cup \pi(z)\cup e_{xy}\cup e_{xz}\cup e_{yz}$ is $\G_i$-non-zero.
\end{enumerate}

Let $\pi$ be a  $K_t$-model in $(\dirG, \g)$.  We say $V(K_t)$ and $E(K_t)$ are the \emph{vertices} and \emph{edges} of $\pi$, respectively. Recall that there is a tangle of order $\lceil \frac{2t}{3} \rceil$ in $K_t$ (see Lemma~\ref{cliquetangle}). Thus, $\pi$ induces a tangle of order $\lceil \frac{2t}{3} \rceil$ in $(\dirG, \g)$, which we denote as $\cT_\pi$. 
We say $\pi'$ is an \emph{enlargement} of $\pi$
if $\pi'$ is a $K_{t'}$-model in $(\dirG, \g)$,
and for all $v\in V(K_{t'})$, the tree $\pi'(v)$ contains some $\pi(u)$ for $u\in V(K_t)$ (and thus $t'\leq t$).

The following lemma will be used frequently in our proofs. 

\begin{lemma}[\cite{CGGGLS06}]\label{lemma: non-zero A-paths}
Let $\G$ be a  group and $(\dirG, \G)$ be a $\G$-labeled graph. For every $A \subseteq V(G)$, one of the following two statements hold.
\begin{enumerate}[(i)]
	\item there are $k$ pairwise disjoint $\G$-non-zero $A$-paths in $(\dirG, \g)$, or 
	\item there is a set $X$ of at most $2k-2$ vertices such that $(\dirG-X, \g)$ does not contain a $\G$-non-zero $A$-path.
\end{enumerate}
\end{lemma}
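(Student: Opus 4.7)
The plan is to proceed by induction on $k$. The base case $k = 1$ is immediate: if a $\G$-non-zero $A$-path exists we have (i); otherwise $X = \es$ gives (ii) since $2 \cdot 1 - 2 = 0$. For the inductive step, the natural first move is to invoke Mader's classical theorem on disjoint $A$-paths in the underlying graph $G$. If $G$ does not admit $k$ pairwise disjoint $A$-paths, Mader's theorem produces a set $X$ of at most $2k-2$ vertices meeting every $A$-path; in particular $X$ meets every $\G$-non-zero $A$-path, so (ii) holds. Thus we may assume $G$ contains a family of $k$ pairwise disjoint $A$-paths but no such family consisting entirely of $\G$-non-zero paths.

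Among all families $\cP = \{P_1, \ldots, P_k\}$ of $k$ pairwise disjoint $A$-paths in $(\dirG, \g)$, choose one maximizing the number of $\G$-non-zero members. If all $P_i$ are $\G$-non-zero, (i) is established, so assume that at least one $P_i$ is $\G$-zero. The strategy is a Gallai-Mader-style augmenting argument: consider walks that alternate between edges outside $E(\cP)$ and (possibly reversed) subsegments of the $P_j$'s, and use such a walk to reroute the linkage so as to turn a $\G$-zero member of $\cP$ into a $\G$-non-zero one while maintaining disjointness and size $k$. The existence of such an augmenting walk contradicts maximality of $\cP$; its non-existence should produce, via a Menger-type counting on the boundary of the reachability structure, a vertex set $X$ of size at most $2k-2$ meeting every $\G$-non-zero $A$-path, yielding (ii).

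The main obstacle is twofold. First, swapping subsegments of paths is sensitive to the group labeling: reversal inverts group values, and concatenations may cancel, so one must control precisely how the group-value of a rerouted path is assembled from subsegment values. The standard way around this is to preprocess $(\dirG, \g)$ by shifting at internal vertices of each $P_i$ so that the entire group-value of $P_i$ is concentrated at a single designated edge; after this normalization the rerouting operations are essentially group-agnostic, and whether a resulting path is $\G$-non-zero reduces to a combinatorial condition on which designated edges it uses. Second, obtaining the tight bound $2k-2$ (as opposed to something like $4k$ one would get from a naive Menger argument on an auxiliary graph) requires importing Mader's own boundary counting into this augmented setting; this is the delicate technical core.

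For non-abelian $\G$, one further has to check that all rerouting and shifting operations are well-defined on the ``zero/non-zero'' status of paths, not on the group-values themselves. This is justified by the base-point and orientation invariance established earlier in Section~\ref{sec:background}: both reversal and cyclic reindexing preserve whether a closed walk is $\G$-zero, and an analogous statement holds for $A$-paths since the relevant group-values can be conjugated to a canonical form by shifts at the endpoints in $A$. With this framework in place, the augmenting argument proceeds identically to the abelian case and yields the optimal bound $2k - 2$.
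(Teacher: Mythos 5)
The paper does not prove this lemma; it simply cites it from~\cite{CGGGLS06}, so there is no in-paper proof to compare against. That reference establishes an exact min--max theorem for vertex-disjoint non-zero $A$-paths in group-labeled graphs, of which the $2k-2$ Erd\H{o}s--P\'osa bound quoted here is a corollary; the proof there is a structural induction, not a Gallai--Mader-style augmenting-walk argument.

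Your proposal has a genuine gap, and it is exactly where the theorem lives. Case~1 (no $k$ disjoint $A$-paths at all; apply Gallai's $\tau\le 2\nu$ bound for $A$-paths) is fine, though note that your ``induction on $k$'' is never actually invoked -- the argument is a case split, not an induction. In Case~2, after fixing a family of $k$ disjoint $A$-paths maximizing the number of non-zero members and observing that some member is zero, you assert that ``its non-existence should produce, via a Menger-type counting on the boundary of the reachability structure, a vertex set $X$ of size at most $2k-2$ meeting every $\G$-non-zero $A$-path.'' This sentence is the whole theorem restated, not an argument. The difficulty is precisely that non-augmentability of a maximum \emph{mixed} linkage (some paths zero, some non-zero) does not obviously yield a small transversal of the \emph{non-zero} $A$-paths: the hitting set you need is not for all $A$-paths, and the rerouting calculus is governed by group-values that do not interact cleanly with the alternating-walk reachability structure, even after the shifting normalization you describe. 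The normalization is a sound preprocessing step, but it does not reduce the problem to a group-agnostic one, because which ``designated edges'' a rerouted path picks up still depends on the combinatorics of the swap in a way that does not mirror the ordinary $A$-path case. Without a concrete construction of the transversal from the non-augmentable configuration, the argument does not close, and the known proofs of this statement take a genuinely different route.
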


Let $R(n,m)$ be the least number such that for every $\{\text{red}, \text{blue}\}$-colouring of the edges of the $3$-uniform complete hypergraph on $R(n,m)$ vertices, there
is always a red $3$-uniform complete hypergraph on $n$ vertices or a blue $3$-uniform complete hypergraph on $m$ vertices.
It is well known that $R(n,m)$ is finite for all positive integers $n,m$.

\begin{lemma} \label{lemma: odd Kt minor}
Let $\G$ be a group and $(\dirG, \g)$ be a $\G$-labeled graph.  
For every $K_{R(t,24t)}$-model $\pi$ in $(\dirG, \g)$, there is either a $\G$-odd $K_t$-model in $(\dirG, \g)$ that is an enlargement of $\pi$,
or there is a set of vertices $X$ such that $|X|<8t$ and the $\zT_\pi$-large block of $(\dirG-X, \g)$ is $\G$-bipartite.
\end{lemma}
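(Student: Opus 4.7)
The plan is to apply the hypergraph Ramsey theorem to the triples of branch vertices of $\pi$, split into two cases, and in the difficult case use Lemma~\ref{lemma: non-zero A-paths}.

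For each triple $\{x,y,z\}$ of branch vertices of $\pi$ (a $K_{R(t,10t)}$-model), the subgraph $\pi(x)\cup\pi(y)\cup\pi(z)\cup\pi(xy)\cup\pi(xz)\cup\pi(yz)$ contains a unique cycle, and its $\G$-zero/non-zero status depends only on its edge set. Colour the triple \emph{red} if this cycle is $\G$-non-zero and \emph{blue} otherwise. By definition of $R(t,10t)$, there either exist $t$ branch vertices spanning only red triples or $10t$ branch vertices spanning only blue triples. In the red case, restricting $\pi$ to the $t$ red branch vertices produces a $K_t$-model $\pi'$ that is automatically an enlargement of $\pi$ (each $\pi'(v)$ equals some $\pi(u)$), and every triangle cycle is $\G$-non-zero by the colouring, so $\pi'$ is the desired $\G$-odd $K_t$-enlargement.

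Otherwise we have a $K_{10t}$-submodel $\pi'$ of $\pi$ whose triangle cycles are all $\G$-zero. Write $T_1,\dots,T_{10t}$ for its trees and set $A=\bigcup_{i=1}^{10t}V(T_i)$. Each tree is $\G$-bipartite, and the all-triangles-zero condition is a cocycle condition on $K_{10t}$, so I can shift so that $\pi'$ is null-labeled on trees and on all of its edges. Apply Lemma~\ref{lemma: non-zero A-paths} with $k=4t$: either (a) there exist $4t$ pairwise disjoint $\G$-non-zero $A$-paths, or (b) there exists $X\subseteq V(G)$ with $|X|\le 2(4t)-2<8t$ such that $(\dirG-X,\g)$ has no $\G$-non-zero $A$-path.

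In case (b), I would verify that the $\zT_\pi$-large block $U$ of $(\dirG-X,\g)$ is $\G$-bipartite. Since $|X|<8t$ is well below the order $\lceil 2R(t,10t)/3\rceil$ of $\zT_\pi$, the large block $U$ is well defined; moreover, $X$ destroys at most $8t$ of the $10t$ trees $T_i$ of $\pi'$, so at least $2t$ trees meet $V(U)$ (they cannot all lie on $\zT_\pi$-small sides). If $U$ contained a $\G$-non-zero cycle $C$, then 2-connectivity of $U$ lets me find two internally disjoint paths in $U$ from $V(C)$ to distinct vertices of $A\cap V(U)$; reversing the direction around $C$ changes the resulting $A$-walk by $\g(C)\ne 0$, so one choice yields a $\G$-non-zero $A$-walk which, after shortening to an $A$-path using that $\g(C)$ can be redistributed, contradicts the hypothesis of case (b).

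In case (a), I would use the $4t$ disjoint $\G$-non-zero $A$-paths together with the null-labeled submodel $\pi'$ to construct a $\G$-odd $K_t$-enlargement of $\pi$. Each $A$-path $P$ with ends in $T_a$ and $T_b$ is split at a midpoint: the two halves are appended to $T_a$ and $T_b$ and, after a local shift along each half, the middle edge of $P$ becomes a connecting edge between the enlarged branch sets carrying label $\g(P)\ne 0$. Selecting $t$ of the $10t$ enlarged trees together with connecting edges so that every one of the $\binom{t}{3}$ triangle cycles in the resulting $K_t$-enlargement is $\G$-non-zero is where I expect the main work of the proof to lie; this organizational combinatorics is the principal obstacle, and it is here that the choice of the parameters $R(t,10t)$ and $k=4t$ is used tightly.
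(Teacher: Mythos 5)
Your red/blue Ramsey reduction and your handling of case~(b) (no $4t$ disjoint non-zero paths) match the paper's argument in spirit. However, your case~(a) has both a fundamental structural flaw and a gap that you yourself acknowledge, and these are where the real content of the lemma lies.

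The structural flaw: your plan is to split each of the $4t$ non-zero $A$-paths at a midpoint, absorb the halves into the two branch trees they touch, and use the middle edge (now carrying the non-zero label $\g(P)$) as the connecting edge between those two enlarged branch sets. But an enlargement $\pi'$ of $\pi$ to a $K_t$-model needs a connecting edge between \emph{every} pair of its $t$ branch sets, i.e.\ $\binom{t}{2}$ of them, and you only have at most $4t$ non-zero edges coming from the paths. So all the remaining connecting edges must come from $\eta$, which is null-labeled; any triangle whose three connecting edges all come from $\eta$ is $\G$-zero, so $\pi'$ would not be $\G$-odd. The paper avoids this entirely by a different merging trick: it pairs up $2t$ trees $\eta(v_1),\ldots,\eta(v_{2t})$ and, for each $i$, builds a \emph{single} new branch tree $\eta'(w_i)=\eta(v_{2i-1})\cup\eta(v_{2i})\cup E(P_i)$, burying the non-zero path $P_i$ \emph{inside} one branch set rather than between two; the connecting edges $\eta'(w_iw_j)=\eta(v_{2i}v_{2j-1})$ are all old (null-labeled) $\eta$-edges. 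The cycle in any triple $\{w_i,w_j,w_k\}$ with $i<j<k$ is then forced to cross exactly one $P_\ell$ (the middle one, $P_j$), so it is automatically $\G$-non-zero — no organizational combinatorics over $\binom{t}{3}$ triangles is required.

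Two further issues. First, you apply Lemma~\ref{lemma: non-zero A-paths} with $A=\bigcup_i V(T_i)$, which allows an $A$-path to have both endpoints in the same tree; the paper instead takes one representative vertex $s_i$ per tree and works with $S=\{s_1,\ldots,s_{10t}\}$, so every $S$-path automatically joins two distinct trees. Second, even with the correct merging trick, you still need $t$ disjoint non-zero paths $P_1,\ldots,P_t$ that join $t$ \emph{disjoint pairs} of trees and avoid all the other trees used in the construction; $4t$ disjoint $S$-paths alone do not guarantee this, since they could be concentrated on a few trees. The paper resolves this (the point you flagged as "the principal obstacle") by choosing the $4t$ paths to minimize the number of edges outside $\bigcup_i\eta(v_i)$, which forces $\eta(v_{8t+1}),\ldots,\eta(v_{10t})$ to be untouched, then contracting those $2t$ trees to a set $S'$ of $2t$ vertices and applying Lemma~\ref{lemma: non-zero A-paths} a \emph{second} time to $S'$ with $k=t$. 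Without this second application, case~(a) is not just unfinished but not provable along the route you sketch.
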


\begin{proof}
In view of the statement, we may assume that $t\geq 3$.
Let $\pi$ be a $K_{R(t,24t)}$-model in a $\G$-labeled graph $(\dirG, \g)$.
Let $H$ be a $3$-uniform complete hypergraph with vertex set $\bigcup_{u \in V(K_{R(t,24t)})}\pi(u)$.
Color an edge $xyz$ of $H$ red, if 
the cycle of $(\dirG, \g)$ in 
\begin{align*}
    \pi(x)\cup \pi(y)\cup\pi(z)\cup\pi(xy)\cup\pi(xz)\cup\pi(yz);
\end{align*}
is $\G$-non-zero;
otherwise, color $xyz$ blue.  
By the definition of the Ramsey number $R(t,24t)$, 
there is a $\G$-odd $K_t$-model in $(\dirG, \g)$ or a $\G$-bipartite $K_{24t}$-model $\eta$ in $(\dirG, \g)$ that is an enlargement of $\pi$.

Henceforth, we assume the latter case.
Let $\{v_1,\ldots,v_{24t}\}$ be the vertices of $\eta$.
Since $\eta$ is $\G$-bipartite, by shifting we may assume that all edges of $\eta$ are null-labeled
(the definition of a $\G$-odd $K_t$-model is invariant under shifting).  

Next we show the following claim.
\begin{equation}\label{parity breaking paths}
\begin{minipage}[c]{0.8\textwidth}\em
Suppose there are $t$ disjoint $\G$-non-zero paths $P_1,\ldots,P_t$ in $(\dirG, \g)$ such that for all $i \in [t]$, 
the path $P_i$ is a $\eta(v_{2i-1})$--$\eta(v_{2i})$-path
and $\eta(v_j)\cap P_i=\es$ for all $j \in[2t]\sm \{2i-1,2i\}$,
then there is a $\G$-odd $K_t$-model $\eta'$ in $(\dirG, \g)$ which is an enlargement of $\eta$ (and thus of $\pi$).
\end{minipage}\ignorespacesafterend 
\end{equation} 
Suppose there are $t$ paths $P_1,\ldots,P_t$ as in \eqref{parity breaking paths}.
Without loss of generality, we may assume that the edge $\eta(v_iv_j)$ is directed towards $v_j$ for all $i,j\in[t]$, $i<j$.
Let $\{w_1,\ldots,w_t\}$ be the vertices of the following defined $K_t$-model $\eta'$.
Define $\eta'(w_i)=\eta(v_{2i-1})\cup \eta(v_{2i}) \cup E(P_i)$
and $\eta'(w_iw_j)=\eta(v_{2i}v_{2j-1})$ for $j>i$.

Let $i<j<k\leq t$ and let $C$ be the unique cycle in 
\begin{align*}
    \eta'(w_i) \cup \eta'(w_j) \cup \eta'(w_k) \cup \eta'(w_iw_j) \cup \eta'(w_iw_k) \cup \eta'(w_jw_k).
\end{align*}
Since $E(P_j) \subseteq E(C)$, $\g(P_j) \neq 0$, and all other edges of $C$ are null-labeled, we have $\g(C) \neq 0$.  Thus, $\eta'$ is a $\G$-odd $K_t$-model in $(\dirG, \G)$ which is an enlargement of $\eta$, as required.

\bigskip

Let us come back to the $\G$-bipartite $K_{24t}$-model $\eta$.
We now show that there is a collection of paths as described in (\ref{parity breaking paths})
or a set $X$ such that $|X|<8t$ and the $\zT_\pi$-large block of $(\dirG-X, \g)$ is $\G$-bipartite.
Recall that $V(\eta)=\{v_1,\ldots,v_{24t}\}$.
Pick from each tree $\eta(v_i)$ a vertex $s_i$ such that for all components $K$ in $\eta(v_i)-s_i$,
there exist at most $12t-1$ integers $j\in [24t]\setminus\{i\}$ for which an end of $\eta(v_iv_j)$ belongs to $K$ (clearly, $s_i$ exists).
Let $S=\{s_1,\ldots,s_{24t}\}$.

By Lemma~\ref{lemma: non-zero A-paths},
there is a set $X$ such that $|X|\leq 8t-2$ and $(\dirG-X, \g)$ does not contain a $\G$-non-zero $S$-path
or $(\dirG, \G)$ contains $4t$ disjoint $\G$-non-zero $S$-paths.
Suppose such a set $X$ exists.
Since $|X|<8t$, we may assume that $\eta(v_i)\cap X=\es$ for $i\in [16t]$.

Let $(\overrightarrow{U}, \g)$ be the $\cT_\eta$-large block of $(\dirG -X, \g)$.
Assume for a contradiction that $(\overrightarrow{U}, \g)$ is not $\G$-bipartite and hence contains a $\G$-non-zero cycle $C$.
For $i \in [2]$, if $s_i\notin V(U)$, then let $x_i$ be the unique vertex in $U\cap \eta(v_i)$ such that there is a $s_i$--$x_i$-path $Q_i$ internally-disjoint from $U$ in $\eta(v_i)$.
If $s_i\in V(U)$, set $x_i=s_i$.
Since $U$ is $2$-connected, there are two disjoint paths $Q_1',Q_2'$ joining $\{x_1,x_2\}$ and $C$.
Since $C$ is a $\G$-non-zero cycle, there is a $\G$-non-zero $s_1$--$s_2$-path in $(\dirG -X, \g)$ by combining $Q_1$, $Q_1'$, a suitable part of the cycle $C$,
$Q_2'$, and $Q_2$.
This contradicts the fact that $(\dirG -X, \g)$ does not contain a $\G$-non-zero $S$-path.
Hence, the $\zT_\pi$-large block of $(\dirG -X, \g)$ is $\G$-bipartite, as required.

Therefore, from now on, we assume that there are $4t$ disjoint $\G$-non-zero $S$-paths $\cP=\{P_1,\ldots, P_{4t}\}$.
We choose $\cP$ such that the number of edges lying in a path of $\cP$ but not in any tree $\eta(v_i)$ is as small as possible.
By re-indexing, we may assume $P_i$ has ends $s_{2i-1}$ and $s_{2i}$.
It is not difficult to see that $\bigcup_{i=8t+1}^{24t}\eta(v_i)$ does not intersect a path in $\cP$ by our choice of $\cP$.
 
Let $(\overrightarrow{G'}, \g')$ be the minor of $(\dirG, \g)$ obtained by contracting each $\eta(v_i)$ to a single vertex for $i\in [10t]\sm [8t]$
and let $S'=\{s_{8t+1}', \dots, s_{10t}'\}$ be the set of contracted vertices.

Again, by Lemma~\ref{lemma: non-zero A-paths},
there is a set $Y \subseteq V(G')$ such that $|Y|\leq 2t-2$ and $(\overrightarrow{G'}-Y, \g')$ does not contain a $\G$-non-zero $S'$-path
or $(\overrightarrow{G'}, \g')$ contains $t$ disjoint $\G$-non-zero $S'$-paths.

Suppose that there are $t$ disjoint $\G$-non-zero $S'$-paths  $R_1',\ldots,R_t'$ in $(\overrightarrow{G'}, \g')$.
It is easy to see that we may lift these paths to $t$ disjoint $\G$-non-zero $S$-paths $R_1,\ldots,R_t$ in $(\dirG, \g)$
such that for all $i \in [t]$, $R_i$ joins $s_{8t+2i-1}$ and $s_{8t+2i}$,
and $R_i$ is disjoint from $\eta(v_j)$ for all $j\in \{8t+1,\ldots,10t\}\setminus\{8t+2i-1,8t+2i\}$.
By applying (\ref{parity breaking paths}) to $\bigcup_{i=8t+1}^{10t}\eta(v_i) \cup R_i$, we obtain the desired $\G$-odd $K_t$-model.

Therefore, we may assume that there is a set $Y \subseteq V(G')$ such that $|Y|\leq 2t-2$ and $(\overrightarrow{G'}-Y, \g')$ does not contain a $\G$-non-zero $S'$-path.
Since $|Y|\leq 2t-2$ and $|\cP|=4t$, we may assume that $Y$ is disjoint from $\eta(v_1),\eta(v_2),P_1$ (possibly renaming among $[8t]$),
$s_{8t+1}', s_{8t+2}'$ (possibly renaming among $[10t]\sm[8t]$), and $\bigcup_{i=12t+1}^{24t}\eta(v_i)$ (possibly renaming among $[24t]\sm[10t]$).
Note that $P_1$ may only intersect $\eta(v_i)$ for $i \in \{3, \dots, 8t\}$. Let $a$ and $b$ be the ends of $\eta(v_1v_2)$ in $\eta(v_1)$ and $\eta(v_2)$, respectively.  
By our choice of $S$,
there exist distinct $j_1,j_2\in [24t]\sm[12t]$ such that $\eta(v_{j_2}v_1)$ either $a=s_1$ or $\eta(v_{j_2}v_1)$ does not meet the component of $\eta(v_1)-s_1$ containing $a$ and either $b=s_2$ or $\eta(v_2v_{j_1})$ does not meet the component of $\eta(v_2)-s_2$ containing $b$.  Therefore, the  unique (null-labeled) cycle $C$ in $\bigcup_{i\in \{1,2,j_1,j_2\}}\eta(v_i)\cup \eta(v_1v_2)\cup \eta(v_2v_{j_1})\cup \eta(v_{j_1}v_{j_2})\cup \eta(v_{j_2}v_1)$ contains $s_1$ and $s_2$.
Observe that $P_1\sm E(C)$ is a collection of (possibly trivial) $C$-paths.
As $C$ is null-labeled and $s_1,s_2\in V(C)$,
one of these $C$-paths is $\G$-non-zero because $P_1$ is $\G$-non-zero.
This gives rise to a $\G$-non-zero $s_{j_1}$-$s_{j_2}$-path that in fact contains a $\G$-non-zero $\eta(v_{j_1})$--$\eta(v_{j_2})$-path $P'$ (since $P_1$ is disjoint from $\eta(v_{j_1})\cup \eta(v_{j_2})$).
Consequently, $\eta(v_{j_1})\cup \eta(v_{j_2})\cup \eta(v_{j_1}v_{j_2}) \cup P'$ contains a $\G$-non-zero cycle $C'$.
This in turn, together with any $s_{8t+i}'$-$C'$-path with internal vertices in $\eta(v_{j_i})$ for $i\in [2]$ yields a $\G$-non-zero $s_{8t+1}'$--$s_{8t+2}'$-path in $(\overrightarrow{G'}, \g')$, which is a contradiction.
\end{proof}

We now extend Lemma~\ref{lemma: odd Kt minor} to $(\G_1,\G_2)$-group-labeled graphs.  

\begin{lemma}\label{lemma: odd Kt minor2}
For every $t \in \mathbb{N}$, there is an integer $T=T(t)$ such that for all  groups $\G_1$ and $\G_1$, 
every $K_T$-model $\pi$ in a $(\G_1 \oplus \G_2)$-labeled graph $(\dirG, \g)$ either contains a $(\G_1,\G_2)$-non-zero $K_t$-model that is an enlargement of $\pi$,
or a set $X \subseteq V(G)$ such that $|X|<8R(t,24t)$ and the $\zT_\pi$-large block of $(\dirG-X, \g)$ is $\G_1$-bipartite or $\G_2$-bipartite.
\end{lemma}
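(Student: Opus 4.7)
The strategy is to apply Lemma~\ref{lemma: odd Kt minor} twice, once for each projection $\g_i$, and then combine the two resulting odd $K$-models. Set $T(t) := R(R(t,10t),\, 10R(t,10t))$, and first view $(\dirG,\g)$ as the $\G_1$-labeled graph $(\dirG,\g_1)$. Applying Lemma~\ref{lemma: odd Kt minor} to the $K_T$-model $\pi$ yields either a set $X \subseteq V(G)$ with $|X| < 8R(t,10t)$ such that the $\zT_\pi$-large block of $(\dirG-X,\g)$ is $\G_1$-bipartite (which gives conclusion~(ii)), or else a $\G_1$-odd $K_{R(t,10t)}$-model $\pi_1$ that is an enlargement of $\pi$.

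I then apply the lemma a second time, viewing $(\dirG,\g)$ as $(\dirG,\g_2)$ and using the $K_{R(t,10t)}$-model $\pi_1$. This produces either a $\G_2$-odd $K_t$-model $\pi_2$ that is an enlargement of $\pi_1$, or a set $X$ with $|X|<8t$ such that the $\zT_{\pi_1}$-large block of $(\dirG-X,\g)$ is $\G_2$-bipartite. The main obstacle I anticipate is reconciling the two different tangles appearing in this bipartite case, since the conclusion of the lemma is phrased for $\zT_\pi$ and not for $\zT_{\pi_1}$. I plan to resolve this by showing that the two large blocks of $G-X$ coincide: since each branch set $\pi_1(w)$ contains some branch set $\pi(u)$, any separation $(A,B)$ in which every $\pi(u)$ meets $V(B)$ automatically has every $\pi_1(w)$ meeting $V(B)$; hence the restriction of $\zT_\pi$ to separations of order less than $\lceil 2R(t,10t)/3 \rceil$ is contained in, and by~(T1) equals, $\zT_{\pi_1}$. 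The $\zT_\pi$-large block of $G-X$ then coincides with the $\zT_{\pi_1}$-large block, so conclusion~(ii) of the lemma holds.

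In the remaining case I construct the $(\G_1,\G_2)$-odd $K_t$-model $\sigma$ from $\pi_1$ and $\pi_2$. For each $w \in V(K_t)$ fix some $u(w) \in V(K_{R(t,10t)})$ with $\pi_1(u(w)) \subseteq \pi_2(w)$, which exists because $\pi_2$ enlarges $\pi_1$. Set
\[
\sigma(w) := \pi_2(w), \qquad \sigma(ww') := \{\pi_2(ww'),\; \pi_1(u(w)u(w'))\}
\]
for every vertex $w$ and edge $ww'$ of $K_t$; note that $|\sigma(ww')|\in\{1,2\}$ as required, and $\sigma$ enlarges $\pi$ by transitivity. To verify the oddness condition for a triple $x,y,z$: for $i=2$, take $e_{xy}:=\pi_2(xy)$ and analogously for $xz,yz$, and the unique cycle built is $\G_2$-non-zero by the $\G_2$-oddness of $\pi_2$. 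For $i=1$, take the $\pi_1$-edges $\pi_1(u(x)u(y))$, $\pi_1(u(x)u(z))$, $\pi_1(u(y)u(z))$; since each $\pi_1(u(w))$ is a subtree of the tree $\pi_2(w)$, the unique path in $\pi_2(w)$ between any two vertices of $\pi_1(u(w))$ coincides with the unique path in $\pi_1(u(w))$, so the resulting unique cycle is exactly the cycle in $\pi_1(u(x))\cup\pi_1(u(y))\cup\pi_1(u(z))\cup\pi_1(u(x)u(y))\cup\pi_1(u(x)u(z))\cup\pi_1(u(y)u(z))$, which is $\G_1$-non-zero by the $\G_1$-oddness of $\pi_1$.
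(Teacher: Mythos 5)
Your proof is correct and follows essentially the same approach as the paper: set $T=R(R(t,10t),10R(t,10t))$, apply Lemma~\ref{lemma: odd Kt minor} first to the projection $\g_1$ to extract a $\G_1$-odd $K_{R(t,10t)}$-model, then apply it again to $\g_2$, and finally merge the two models by adding the edge $\pi_1(u(w)u(w'))$ to $\pi_2(ww')$. The one place where you go further than the paper is the tangle reconciliation: the paper applies Lemma~\ref{lemma: odd Kt minor} a second time and quietly reports the bipartite outcome with respect to $\zT_{\pi'}$ even though the lemma to be proved speaks of $\zT_\pi$. Your observation that, because $\pi_1$ is an enlargement of $\pi$, every branch set of $\pi_1$ meets $V(B)$ whenever every branch set of $\pi$ does --- so the truncation of $\zT_\pi$ to order $\lceil 2R(t,10t)/3\rceil$ coincides with $\zT_{\pi_1}$, and hence the two large blocks of $G-X$ agree --- is exactly the missing justification and is worth making explicit.
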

\begin{proof}
Let $r=R(t,24t)$, $T=R(r,24t)$, and $\pi$ be a $K_T$-model in a $(\G_1 \oplus \G_2)$-labeled graph $(\dirG, \g)$. By regarding $\pi$ as a $K_T$-model in $(\dirG, \g_1)$ and applying 
Lemma~\ref{lemma: odd Kt minor}, 
there exists a $\G_1$-odd $K_{r}$-model $\pi'$ in $(\dirG, \g_1)$,
or a set of less than $8r$ vertices $X_1$ such that the $\cT_\pi$-large block of $(\dirG-X_1, \g_1)$ is $\G_1$-bipartite.

We may assume the first case.   By regarding $\pi'$ as a $K_r$-model in $(\dirG, \g_2)$ and applying 
Lemma~\ref{lemma: odd Kt minor}, there exists a $\G_2$-odd $K_{t}$-model $\pi''$ in $(\dirG, \g_2)$ that is an enlargement of $\pi'$,
or a set of less than $8t$ vertices $X_2$ such that the $\cT_{\pi'}$-large block of $(\dirG-X_2, \g_2)$ is $\G_2$-bipartite.

Once again, we may assume that we have such a $\G_2$-odd $K_t$-model.
Note that $|\pi''(uv)|=1$ for every $uv$ of $\pi''$.
For every vertex $v$ of $\pi''$,
let $v'$ be a vertex  of $\pi'$ such that $\pi'(v')\subseteq \pi''(v)$. 
By adding $\pi'(u'v')$ to $\pi''(uv)$ for every edge $uv$ of $\pi''$,
we obtain a $(\G_1,\G_2)$-odd $K_t$-model in $(\dirG, \g)$.
\end{proof}

\section{A Flat Wall Theorem} \label{sec:flatwall}

In this section, we state our Flat Wall Theorem for $(\G_1 \oplus \G_2)$-labeled graphs.  

Let $\G_1$ and $\G_2$ be  groups. Let $(\dirW, \g)$ be a wall in a $(\G_1 \oplus \G_2)$-labeled graph $(\dirG, \g)$.  We say $(\dirW, \g)$ is \emph{facially $\G_i$-non-zero} if every facial cycle of $(\dirW, \g)$ is $\G_i$-non-zero.
If $(\dirW, \g)$ is both facially $\G_1$-non-zero and facially $\G_2$-non-zero, then we say that $(\dirW, \g)$ is \emph{facially $(\G_1, \G_2)$-non-zero}.

Further suppose that $(\dirW, \g)$ is a flat wall in  $(\dirG, \g)$.  Let $(\overrightarrow{W_0}, \g)$ be a flat $1$-contained subwall of $(\dirW, \g)$ and let $(A, B)$ be a separation certifying that $W_0$ is flat.
Let $N$ be the set of top nails of $W_0$ with respect to $W$.

Recall that  for a family $\cP$ of disjoint $N$-paths, $I_{\cP}\subseteq N$ is the minimal interval (under inclusion) containing all endpoints of paths in $\cP$, 
and $I_{\cP}^{\ell}$ and $I_{\cP}^{r}$ are the minimal intervals containing all the left and right endpoints of $\cP$, respectively.  In the following we tacitly assume that $N$-paths are always traversed from their left endpoint to their right endpoint.  

An $N$-linkage $\cP$ in $(\dirG, \g)$ is \emph{clean} with respect to $(A,B)$ if
\begin{enumerate}[(i)]
  \item the paths in $\cP$ are internally disjoint from $B$,
	\item $\cP$ is pure,
	\item every path in $\cP$ is $\G$-non-zero, and
	\item if $\cP$ is crossing or nested, then $\g(P_1)=\g(P_2)\not=0$ for all $P_1,P_2\in\cP$.
\end{enumerate}

We say that $\cP$ is \emph{$\G_i$-clean} if it is clean in $(\dirG, \g_i)$.  A pair of $N$-linkages $(\cP,\cQ)$ is \emph{$(\G_1,\G_2)$-clean} with respect to $(A,B)$ if 
\begin{enumerate}[(i)]
    \item $\cP$ and $\cQ$ are $\G_1$-clean and $\G_2$-clean, respectively,
	\item for all $P\in \cP,Q\in \cQ$, the paths $P$ and $Q$ are disjoint,
	\item $|\cP| = |\cQ|$, 
	\item if $I_\cP \cap I_\cQ = \emptyset$, then $I_\cP < I_\cQ$ and otherwise if $I_\cP \cap I_\cQ \neq \emptyset$ then $I^\ell_\cP < I^\ell_\cQ < I^r_\cP < I^r_\cQ$ and neither $\cP$ nor $\cQ$ is in series.
\end{enumerate}
The \emph{size} of $(\cP,\cQ)$ is $|\cP|$.
See Figure~\ref{fig: clean linkage} for illustration.
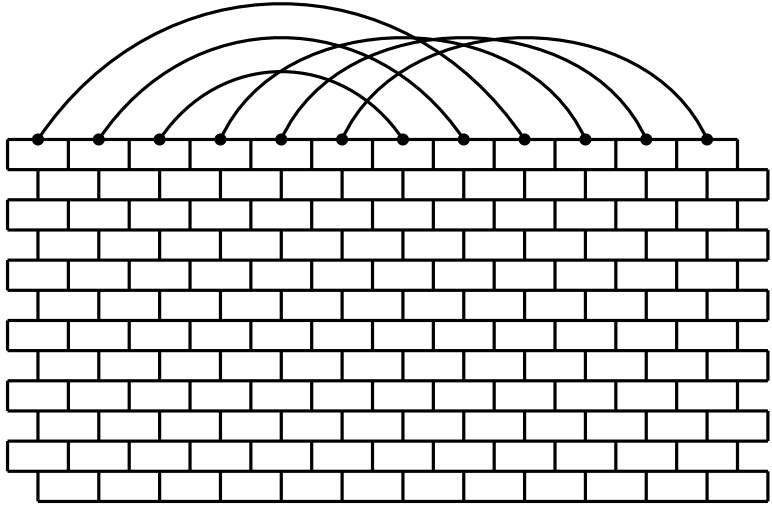
\begin{figure}[t]
\centering
\begin{tikzpicture}
\def\height{1}
\def\width{1}
\def\sizeofwalla{12}
\def\sizeofwallb{12}

\begin{scope}[scale=0.4]
\foreach \i in {2,...,\sizeofwallb}{
	\draw[very thick] (0,\i-1)--(2*\sizeofwalla+1,\i-1);
}
\draw[very thick] (1,0)--(2*\sizeofwalla+1,0);
\draw[very thick] (0,\sizeofwallb)--(2*\sizeofwalla,\sizeofwallb);

\pgfmathsetmacro{\bh}{\sizeofwallb/2-1}
\foreach \i in {0,1,...,\sizeofwalla}{
	\foreach \j in {0,1,...,\bh}{
		\draw[very thick] (2*\i+1,2*\j)--(2*\i+1,2*\j+1);
	}
}

\foreach \i in {0,1,...,\sizeofwalla}{
	\foreach \j in {0,1,...,\bh}{
		\draw[very thick] (2*\i,2*\j+1)--(2*\i,2*\j+2);
	}
}	

\begin{scope}[shift={(-1,0)}]
\draw[very thick] (2,12)..controls (6,18) and (14,18) .. (18,12);
\draw[very thick] (4,12)..controls (7,16.5) and (13,16.5) .. (16,12);
\draw[very thick] (6,12)..controls (8,15) and (12,15) .. (14,12);

\draw[very thick] (8,12)..controls (10,16.5) and (18,16.5) .. (20,12);
\draw[very thick] (10,12)..controls (12,16.5) and (20,16.5) .. (22,12);
\draw[very thick] (12,12)..controls (14,16.5) and (22,16.5) .. (24,12);

\foreach \i in {1,2,...,12}{
	\fill[very thick] (2*\i,12) circle (0.2);
}
\end{scope}

\end{scope}

\end{tikzpicture}
\caption{An example of a $(\G_1 , \G_2)$-clean pair of linkages of size 3.
The first three paths have the same (non-zero) group value in the first coordinate, and the other three paths have the same (non-zero) group value in the second coordinate.  
}\label{fig: clean linkage}
\end{figure}
We can now state our refinement of the Flat Wall Theorem.

\begin{theorem} \label{thm: main wall}
For every $t\in \N$, there exist integers $T(t)$ and $g(t)$
with the following property. 
Let $\G_1$ and $\G_2$ be  groups and $(\dirG, \g)$ be a $(\G_1 \oplus \G_2)$-labeled graph.
If $(\dirG, \g)$ contains a $T(t)$-wall $(\dirW, \g)$,
then one of the following statements holds.
\begin{enumerate}[(a)]
	\item There is a $(\G_1,\G_2)$-odd $K_t$-model $\pi$ in $(\dirG, \g)$ such that $\cT_\pi$ is a restriction of $\cT_W$.
	\item There is a set of vertices $Z$
	such that $|Z|\leq g(t)$ and there is a flat $100t$-wall $(\overrightarrow{W_0}, \g)$ in $(\dirG-Z, \g)$ with top nails $N_0$ and certifying separation	$(A_0, B_0)$ such that
	$\cT_{W_0}$ is a restriction of $\cT_W$ and after possibly shifting
	\begin{enumerate}
		\item[(b.i)] $(\overrightarrow{W_0}, \g)$ is facially $(\G_1,\G_2)$-non-zero, or
		\item[(b.ii)] for some $i\in [2]$, $(\overrightarrow{B_0}, \g_i)$ is null-labeled, $(\overrightarrow{W_0}, \g)$ is facially $\G_{3-i}$-non-zero,
		and there is a $\G_i$-clean $N_0$-linkage $\cP$ of size $t$ 
		with respect to $(A_0,B_0)$, or
		\item[(b.iii)] $(\overrightarrow{B_0}, \g)$ is null-labeled
		and there is a $(\G_1,\G_2)$-clean pair $(\cP,\cQ)$ of $N_0$-linkages of size $t$ with respect to $(A_0,B_0)$.
  \end{enumerate}
  \item There is a set $Z$ of vertices of $(\dirG, \g)$ and some $i\in [2]$ such that $|Z|\leq g(t)$ and the $\cT_W$-large block of $$(\dirG-Z, \g)$$ is $\G_i$-bipartite.
\end{enumerate}
\end{theorem}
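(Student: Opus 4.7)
The plan is to apply the original Flat Wall Theorem (Theorem~\ref{flatwalltheorem}) to $(\dirW, \g)$ with parameters large enough that either arm of its conclusion is strong enough for our purposes. If it returns a clique minor $K$ with $\cT_K$ a restriction of $\cT_W$, we invoke Lemma~\ref{lemma: odd Kt minor2}: either we obtain a $(\G_1,\G_2)$-odd $K_t$-model whose tangle is a restriction of $\cT_K$ and hence of $\cT_W$ (giving conclusion (a)), or we obtain a bounded vertex set $X$ such that the $\cT_K$-large block of $(\dirG - X, \g)$ is $\G_i$-bipartite for some $i \in [2]$. Since $\cT_K$ is a restriction of $\cT_W$, the $\cT_W$-large block of $(\dirG - X, \g)$ is contained in the $\cT_K$-large block and is therefore $\G_i$-bipartite, yielding conclusion (c).

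Otherwise Theorem~\ref{flatwalltheorem} yields a bounded set $Z_1$ and a large flat subwall $(\overrightarrow{W'}, \g)$ of $(\dirW, \g)$ in $(\dirG - Z_1, \g)$ with certifying separation $(A', B')$. Classify each brick of $W'$ by which of $\g_1, \g_2$ it is zero on; an iterated Ramsey/pigeonhole argument on the rows and columns of bricks produces a homogeneous subwall $(\overrightarrow{W''}, \g)$ of size at least $100t$ whose bricks all lie in a single class. If every brick is $(\G_1,\G_2)$-non-zero, set $\overrightarrow{W_0} := \overrightarrow{W''}$ and conclude (b.i). Otherwise every facial cycle of $W''$ is $\G_i$-zero for some $i \in [2]$, so $W''$ is $\G_i$-bipartite and, after shifting, null-labeled in coordinate $i$.

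Next, decide whether the entire flat side $B'$ is $\G_i$-bipartite. If it is, shift so that $B'$ is null-labeled in coordinate $i$, set $B_0 := B'$, and look for a $\G_i$-clean $N_0$-linkage $\cP$: a family of disjoint $N_0$-paths through $A'$, each $\G_i$-non-zero and, when crossing or nested, of a common group value. Such a linkage is extracted by combining Lemma~\ref{lemma: non-zero A-paths} (to produce many disjoint $\G_i$-non-zero $N_0$-paths), a pigeonhole on group values (for the common-value requirement), and a Dilworth-type argument on the three-valued series/nested/crossing relation (for purity). If instead $B'$ is not $\G_i$-bipartite, then a $\G_i$-non-zero cycle inside $B'$ can be combined with the wall to produce many disjoint $\G_i$-non-zero paths between model vertices of a smaller clique minor drawn from $W''$, as in Section~\ref{sec:oddkt}, which feeds back into Lemma~\ref{lemma: odd Kt minor2} and returns us to conclusion (a) or (c). Once $\cP$ is in hand, repeat the analysis for coordinate $3-i$ inside a further subwall of $W''$ whose top nails lie in an interval compatible with $I_\cP$, to produce, if needed, a $\G_{3-i}$-clean linkage $\cQ$ disjoint from $\cP$ and satisfying the interval clause of $(\G_1,\G_2)$-cleanness; depending on whether only $\cP$ or both $\cP$ and $\cQ$ are needed, we land in (b.ii) or (b.iii).

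The main obstacle is the simultaneous control of both coordinates together with the geometric compatibility between $\cP$ and $\cQ$. Each Ramsey, pigeonhole, or purification step consumes a large fraction of the wall, so $T(t)$ must be built as an iterated tower of such bounds. More delicate is enforcing the interval condition $I_\cP < I_\cQ$ or $I^\ell_\cP < I^\ell_\cQ < I^r_\cP < I^r_\cQ$ alongside purity: the three linkage types interact asymmetrically with the top horizontal path, so one must construct $\cP$ first in a prescribed region of $W''$ and then force $\cQ$ into a compatible region, all while the null-labelings of $B_0$ in one or both coordinates remain intact under the further shifts performed along the way.
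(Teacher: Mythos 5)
Your handling of the clique-minor branch (apply Theorem~\ref{flatwalltheorem}, feed the clique model into Lemma~\ref{lemma: odd Kt minor2}, derive outcomes (a) or (c)) matches the paper and is correct. The flat-wall branch, however, has a real gap.

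Classifying the bricks of $W'$ and extracting a homogeneous subwall $W''$ by Ramsey/pigeonhole does not reproduce the trichotomy the proof requires. If the homogeneous class is ``$\G_i$-zero,'' you learn at best something about the wall $W''$ itself, but outcomes (b.ii) and (b.iii) require that the entire flat side $B_0$ be null-labeled (after shifting) in one or both coordinates. Brick classification says nothing about $B'$, and your fallback for the case ``$B'$ is not $\G_i$-bipartite'' is unworkable: you propose to build ``a smaller clique minor drawn from $W''$'' and feed it into Lemma~\ref{lemma: odd Kt minor2}, but a wall is planar and has no $K_t$-minor for $t\ge 5$, so there is no clique model to draw from $W''$. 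The route back to (a)/(c) is closed.

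The idea your proposal is missing is the local rerouting technique of Lemma~\ref{lemma: one group wall} and Lemma~\ref{lemma: two group wall}. The paper tiles $W'$ by a grid of disjoint flat subwalls $W_{k,i,j}$ and inspects, for each, whether the flat side $B_{k,i,j}$ contains a $\G_1$-non-zero cycle, a $\G_2$-non-zero cycle, or a cycle that is non-zero in exactly one coordinate. When such cycles exist for every tile, it \emph{reroutes} one vertical path of each brick of a coarse subwall through those cycles (Proposition~\ref{prop: rerouting} and Lemma~\ref{lemma: non-zero brick}), producing a new flat wall whose bricks are all $(\G_1,\G_2)$-non-zero; this gives (b.i). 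When some $B_{k,i,j}$ is, say, $\G_1$-bipartite, the argument recurses inside $B_{k,i,j}$ on the single group $\G_2$ (Lemma~\ref{lemma: one group wall}); that recursion either reroutes to a facially $\G_2$-non-zero subwall, leading to (b.ii) once the linkage is built, or ends with a $(\G_1\oplus\G_2)$-bipartite flat side, leading to (b.iii). This rerouting is what makes a wall's bricks and the structure of the surrounding flat side cooperate; passive classification of the existing bricks cannot achieve the same. Your subsequent outline for extracting and cleaning the $N_0$-linkage (Lemma~\ref{lemma: non-zero A-paths}, pigeonhole on group values, Dilworth/Erd\H{o}s--Szekeres for purity, interval compatibility for $(\cP,\cQ)$) is in the right spirit and essentially matches Lemmas~\ref{lemma: chords one type}--\ref{lemma: wall paths}, but you should also note that when Lemma~\ref{lemma: non-zero A-paths} returns a small hitting set instead of many paths, Lemma~\ref{lemma: zero walls} is what converts that into the $\G_i$-bipartite large-block outcome (c).
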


The Flat Wall Theorem for a group-labeled graph $(\dirG,\g')$ with only one group $\G$ follows from Theorem~\ref{thm: main wall} by considering the labeling $\g(e)=(\g'(e),\g'(e))$ for all edges $e\in E(G)$.
After establishing some necessary lemmas in the next two sections, we will prove Theorem \ref{thm: main wall} in Section \ref{sec:thmmain}.

\section{Cleaning Paths and Cycles} \label{sec:cleaning}

In this section, we prove some lemmas towards the proof of Theorem \ref{thm: main wall}.  Most of these lemmas are of the following form: given a family of paths or cycles, 
there is also a subfamily with certain nice properties.  

\begin{lemma}\label{lemma: two odd triangles}
Let $\G_1$ and $\G_2$ be  groups and $(\dirG, \g)$ be a $(\G_1 \oplus \G_2)$-labeled graph that
contains two disjoint cycles $C_1$ and $C_2$ and two disjoint $C_1$--$C_2$-paths
$P_1$ and $P_2$ such that $C_1$ is $\G_1$-non-zero and $C_2$ is $\G_2$-non-zero.  
Then $C_1 \cup C_2 \cup P_1 \cup P_2$ contains a $(\G_1, \G_2)$-non-zero cycle. 
\end{lemma}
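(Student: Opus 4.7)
The plan is to find the desired $(\G_1,\G_2)$-non-zero cycle among $C_1$, $C_2$, and the four cycles in $H := C_1 \cup C_2 \cup P_1 \cup P_2$ that traverse both $P_1$ and $P_2$. If $C_1$ happens to be $\G_2$-non-zero, then $C_1$ is already $(\G_1,\G_2)$-non-zero and we are done; symmetrically for $C_2$. So I may assume $C_1$ is $\G_2$-zero and $C_2$ is $\G_1$-zero, and spend the rest of the argument on the four remaining candidates.

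Let $a_i$ and $b_i$ be the ends of $P_i$ on $C_1$ and $C_2$ respectively; since $P_1, P_2$ are disjoint, the four vertices $a_1, a_2, b_1, b_2$ are distinct. Let $C_1^{(1)}, C_1^{(2)}$ denote the two $a_1$-$a_2$ arcs of $C_1$ and $C_2^{(1)}, C_2^{(2)}$ the two $b_1$-$b_2$ arcs of $C_2$. Orient each arc and each $P_i$ from its index-$1$ endpoint to its index-$2$ endpoint, and set $\alpha_i := \g(C_1^{(i)})$, $\beta_j := \g(C_2^{(j)})$, $p_1 := \g(P_1)$, $p_2 := \g(P_2)$, all in $\G_1 \oplus \G_2$. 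For $i,j \in \{1,2\}$ let $D_{ij}$ be the cycle based at $a_1$ obtained by traversing $C_1^{(i)}$, then $P_2$, then $(C_2^{(j)})^{-1}$, then $P_1^{-1}$. A direct computation gives
\[
    \g(D_{ij}) = \alpha_i + p_2 - \beta_j - p_1, \quad \g(C_1) = \alpha_1 - \alpha_2, \quad \g(C_2) = \beta_1 - \beta_2.
\]

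The standing assumptions translate to $\alpha_1 = \alpha_2$ in $\G_2$ and $\beta_1 = \beta_2$ in $\G_1$, so $\g_1(D_{ij})$ is independent of $j$ and $\g_2(D_{ij})$ is independent of $i$; write $u_i := \g_1(D_{ij})$ and $v_j := \g_2(D_{ij})$. Cancelling inside the formula yields $u_1 - u_2 = \g_1(C_1) \neq 0$, so at most one of $u_1, u_2$ is zero in $\G_1$. An analogous calculation shows that $v_1 - v_2$ equals $(\alpha_1 + p_2) + (\beta_2 - \beta_1) - (\alpha_1 + p_2)$ in $\G_2$, which is a conjugate of $-\g_2(C_2) \neq 0$ and therefore non-zero, so at most one of $v_1, v_2$ is zero in $\G_2$. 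Choosing $i^*, j^*$ with $u_{i^*} \neq 0$ and $v_{j^*} \neq 0$ produces a $(\G_1,\G_2)$-non-zero cycle $D_{i^* j^*}$. The main subtlety is the non-abelian case: differences of group-values generally yield conjugates rather than canonical elements, but the reductions $\alpha_1 = \alpha_2$ in $\G_2$ and $\beta_1 = \beta_2$ in $\G_1$ are precisely what make $\g_1(D_{\cdot\cdot})$ depend only on $i$ and $\g_2(D_{\cdot\cdot})$ depend only on $j$ on the nose, while conjugation preserves being zero, so the product-structure argument still closes.
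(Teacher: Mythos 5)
Your proof is correct and takes essentially the same approach as the paper: both arguments reduce to the case where $C_1$ is $\G_2$-zero and $C_2$ is $\G_1$-zero, and then work with the same four candidate cycles through $P_1\cup P_2$ (your $D_{ij}$ are exactly the paper's $C'$, $C_1'$, $C_2'$, and the ``opposite'' cycle), exploiting that the $\G_1$-value depends only on the choice of arc of $C_1$ and the $\G_2$-value only on the choice of arc of $C_2$. Your presentation makes the $2\times 2$ product structure explicit rather than arguing by sequential rerouting, and your remark about conjugacy correctly addresses the non-abelian case (one intermediate identity is stated only up to conjugation, but since conjugation preserves being non-zero the conclusion is unaffected).
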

\begin{proof}
We may assume that $\g_2(C_1)=0$ and $\g_1(C_2)=0$; otherwise $C_1$ or $C_2$ is the desired $(\G_1, \G_2)$-non-zero cycle.  
Let $C'$ be a cycle in $C_1 \cup C_2 \cup P_1 \cup P_2$
which contains $P_1 \cup P_2$.   
Again, if $C'$ is $(\G_1, \G_2)$-non-zero, then we are done.  
Suppose $v$ is the first vertex of $C'$.  Let $C_i'$ be the cycle obtained by starting at $v$ and following $C'$ until intersecting $V(C_i)$, then following $C_i\setminus C'$ until intersecting $V(C')$, then following $C'$ to $v$. Observe that $\g_{3-i}(C_i')=\g_{3-i}(C')$ 
and $\g_{i}(C_i')\neq\g_{i}(C')$.
Hence if $\g_i(C') =0$ and $\g_{3-i}(C') \neq 0$, then $C_i'$ is
a $(\G_1, \G_2)$-non-zero cycle. 
Finally, if $\g(C')=(0,0)$, then $((C_1 \cup C_2) - C') \cup P_1 \cup P_2$ is a $(\G_1, \G_2)$-non-zero cycle.
\end{proof}

\begin{lemma}\label{lemma: non-zero brick}
Let $\G_1$ and $\G_2$ be  groups, $(\dirG, \g)$ be a $(\G_1 \oplus \G_2)$-labeled graph, and $C,C_1,C_2$ be disjoint cycles of $(\dirG, \g)$.  
Let $P_1,P_1',P_2,P_2'$ be pairwise disjoint paths such that $P_i$ and $P_i'$ are $C$--$C_i$-paths with $V(P_i) \cap V(C_{3-i})=\emptyset=V(P_i') \cap V(C_{3-i})$ for $i\in[2]$.
Let $p_1,p_1',p_2,p_2'$ be the ends of $P_1,P_1',P_2,P_2'$ on $C$ and assume that they occur in this cyclic order on $C$.  
Further assume that $C_i$ is $\G_i$-non-zero for every $i\in [2]$ and $C_1$ is $\G_2$-zero.  Let $I_1$ be the subpath of $C$ with ends $p_1$ and $p_2'$ and with $p_2 \notin V(I_1)$.  
Let $I_2$ be the subpath of $C$ with ends $p_1'$ and $p_2$ and with $p_1 \notin V(I_2)$.  
Then there is a $(\G_1,\G_2)$-non-zero cycle $D$ in $C_1 \cup C_2 \cup C \cup P_1 \cup P_1'\cup P_2 \cup P_2' $ containing $I_1 \cup I_2$.   
\end{lemma}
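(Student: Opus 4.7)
The plan is to enumerate the cycles in $H := C_1 \cup C_2 \cup C \cup P_1 \cup P_1' \cup P_2 \cup P_2'$ that contain $I_1 \cup I_2$, and then use the hypotheses on $\g_1$ and $\g_2$ to extract one that is $(\G_1,\G_2)$-non-zero.

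First I would argue that any cycle $D \subseteq H$ with $I_1 \cup I_2 \subseteq D$ must leave $C$ at each of $p_1,p_1',p_2,p_2'$: the two arcs of $C$ joining $p_1$ to $p_1'$ (avoiding $p_2,p_2'$) and $p_2$ to $p_2'$ (avoiding $p_1,p_1'$) are not contained in $I_1 \cup I_2$, so they cannot lie in $D$. Combined with the pairwise disjointness of $P_1,P_1',P_2,P_2'$ and the hypothesis that $P_i,P_i'$ avoid $C_{3-i}$, this forces $D$ to use all four paths and to traverse exactly one of the two arcs of $C_1$ between $q_1 := P_1 \cap C_1$ and $q_1' := P_1' \cap C_1$, together with exactly one of the two arcs of $C_2$ between $q_2 := P_2 \cap C_2$ and $q_2' := P_2' \cap C_2$. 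Call these arcs $\alpha_i \subseteq C_1$ and $\beta_j \subseteq C_2$ for $i,j \in \{1,2\}$; there are four candidates $D_{ij}$.

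Next, fixing a starting point and orientation for each $D_{ij}$, I would express the $\G_k$-value ($k \in [2]$) as
\[
\g_k(D_{ij}) \;=\; X_k + \g_k(\beta_j) + Y_k + \g_k(\alpha_i) + Z_k,
\]
where $X_k,Y_k,Z_k$ depend only on the labels along $I_1,I_2,P_1,P_1',P_2,P_2'$ (independently of $i,j$). Taking $q_1'$ as starting point on $C_1$ gives $\g_k(C_1) = \g_k(\alpha_1) - \g_k(\alpha_2)$; analogously $\g_k(C_2) = \g_k(\beta_1) - \g_k(\beta_2)$ with $q_2'$ as starting point on $C_2$.

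Finally, a short case analysis finishes the argument. Since $C_1$ is $\G_2$-zero, $\g_2(\alpha_1) = \g_2(\alpha_2)$, hence $\g_2(D_{1j}) = \g_2(D_{2j})$ for each $j$; since $C_2$ is $\G_2$-non-zero, $\g_2(\beta_1) \neq \g_2(\beta_2)$, hence $\g_2(D_{i1}) \neq \g_2(D_{i2})$ for each $i$. In particular, there exists $j$ such that both $D_{1j}$ and $D_{2j}$ are $\G_2$-non-zero. For that $j$, since $C_1$ is $\G_1$-non-zero we have $\g_1(D_{1j}) \neq \g_1(D_{2j})$, so at least one of these two cycles is also $\G_1$-non-zero, giving the desired $(\G_1,\G_2)$-non-zero cycle $D$ containing $I_1 \cup I_2$. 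The only subtle point is bookkeeping for possibly non-abelian $\G_1,\G_2$: the equalities above must be interpreted with the same starting points and orientations throughout, which is routine given that being $\G$-zero depends only on the edge set of a cycle (as established in Section~\ref{sec:background}).
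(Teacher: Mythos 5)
Your argument is correct and follows essentially the same approach as the paper's: both split each $C_i$ into its two $q_i$--$q_i'$ arcs and use the three hypotheses (first in the $\G_2$ coordinate, then in the $\G_1$ coordinate) to select one arc of $C_2$ and then one arc of $C_1$ so that the resulting cycle is non-zero in both coordinates; the paper simply shifts first so that $I_1\cup I_2\cup P_1\cup P_1'\cup P_2\cup P_2'$ is null-labeled, which replaces your bookkeeping terms $X_k,Y_k,Z_k$. One small inaccuracy in your write-up: it is \emph{not} true that every cycle $D\subseteq H$ with $I_1\cup I_2\subseteq D$ must leave $C$ at all four of $p_1,p_1',p_2,p_2'$ (for instance $D=C$ itself is such a cycle), but this structural claim is not needed --- the lemma only asserts existence, so it suffices to exhibit the four candidates $D_{ij}$ and show one of them works, which you do.
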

\begin{proof}
We use the notation as in the statement.
Moreover,
let the endpoints of $P_i$ and $P_i'$ on $C_i$ be denoted by $q_i$ and $q_i'$, respectively.
The vertices $q_i$ and $q_i'$ split $C_i$ into $q_i$--$q_i'$-paths $Q_i^a$ and $Q_i^b$.
By shifting, we may assume that all edges in $I_1 \cup I_2 \cup P_1 \cup P_1' \cup P_2 \cup P_2'$ are null-labeled.

Suppose $\g(Q_i^a)=(x_i^a,y_i^a)$ and $\g(Q_i^b)=(x_i^b, y_i^b)$.  Since $C_1$ is $\G_2$-zero, we have $y_1^a=y_1^b:=y_1$.  Since $C_2$ is $\G_2$-non-zero, by symmetry, we may assume that $y_1 + y_2^a \neq 0$.  Since $C_1$ is $\G_1$-non-zero, by symmetry, we may assume that $x_1^a+x_2^a \neq 0$.  We are now done by letting $D$ be 
$$
I_1 \cup I_2 \cup P_1 \cup P_1' \cup P_2 \cup P_2' \cup Q_1^a \cup Q_2^a. \qedhere
$$
\end{proof}

\begin{lemma}\label{lemma: chords one type}
Let $t\in\N$, $G$ be a graph, $L \subseteq V(G)$ be linearly ordered, and $\cP$ be an $L$-linkage.  
If $|\cP| \geq t^3$, then
$\cP$ contains a pure $L$-linkage $\cP'$ of size $t$.
\end{lemma}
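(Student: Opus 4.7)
The plan is to view each path $P \in \cP$ as the interval $I_P = [a_P, b_P]$ in the linearly ordered set $L$, where $a_P$ and $b_P$ are the left and right endpoints of $P$. Since the paths in $\cP$ are pairwise vertex-disjoint, all endpoints that arise are distinct in $L$, and for any two paths $P,Q$ the three cases ``in series'', ``nested'', and ``crossing'' correspond exactly to the three possible relative positions of the intervals $I_P$ and $I_Q$ (disjoint, one containing the other, or overlapping with neither containing the other).

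I would first introduce the partial order $\prec$ on $\cP$ defined by $P \prec Q$ iff $b_P < a_Q$; transitivity is immediate from $b_P < a_Q \leq b_Q < a_R$. A chain of length $t$ in $\prec$ is precisely a pairwise-in-series sub-linkage of size $t$, so in this case we are done. Otherwise the longest $\prec$-chain has length at most $t-1$, and by Mirsky's theorem $\cP$ decomposes into at most $t-1$ antichains of $\prec$. Pigeonhole then produces an antichain $B \subseteq \cP$ with $|B| \geq \lceil t^3/(t-1) \rceil > (t-1)^2$; in $B$ no two paths are in series, so every pair of paths in $B$ is either nested or crossing.

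To finish I would sort $B$ by left endpoint as $P_1,\dots,P_m$ with $a_{P_1}<\cdots<a_{P_m}$ and note that for $i<j$ we have $a_{P_j} < b_{P_i}$ (since $B$ is a $\prec$-antichain and the endpoints are distinct); hence $P_i,P_j$ are crossing iff $b_{P_i}<b_{P_j}$, and nested iff $b_{P_i}>b_{P_j}$. Applying the Erd\H{o}s--Szekeres theorem to the sequence $b_{P_1},\dots,b_{P_m}$ of length greater than $(t-1)^2$ yields a monotone subsequence of length $t$; if it is increasing it corresponds to $t$ pairwise crossing paths, and if decreasing to $t$ pairwise nested paths. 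In either case we obtain a pure sub-linkage $\cP' \subseteq \cP$ of size $t$.

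I do not anticipate any serious obstacle. The argument is essentially immediate once the order $\prec$ is chosen: Mirsky's theorem combined with Erd\H{o}s--Szekeres delivers the bound $t^3$ directly, avoiding the $3^t$-type bound one would get from na\"ive three-colour Ramsey. The only small computational check is $\lceil t^3/(t-1) \rceil > (t-1)^2$ for $t \geq 2$, which is clear since $t^3/(t-1) > t^2 \geq (t-1)^2$; the case $t=1$ is trivial because any single path is a pure linkage of size $1$.
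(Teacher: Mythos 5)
Your proof is correct and follows essentially the same two-step strategy as the paper: first find, via a Dilworth/Mirsky-type dichotomy on the interval (in-series) order, either $t$ paths in series or a large family with no two in series, and then apply Erd\H{o}s--Szekeres to the right-endpoint sequence of that family to extract a crossing or nested sub-linkage of size $t$. The only cosmetic difference is that you invoke Mirsky's theorem where the paper cites Dilworth's (and you skip the paper's intermediate observation that pairwise-intersecting intervals share a common point, which is indeed not needed); the bound $t^3$ and the conclusion are identical.
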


\begin{proof}
Dilworth's theorem \cite{Dilworth50} implies that 
for every pair of positive integers $r,s$,
every collection of $rs$ intervals contains $r$ disjoint intervals or $s$ pairwise intersecting intervals.
For every path $P\in\cP$, 
we associate an interval, namely, the smallest interval containing both endpoints of $P$.
By Dilworth's theorem,
we have $t$ disjoint intervals or $t^2$ pairwise intersecting intervals.
In the first case, $\cP$ contains $t$ paths in series.
So assume there are $t^2$ paths $\cQ$ such that their intervals contain a common point.
We order these paths by their smaller endpoint.
Their larger endpoints induce a permutation of their smaller endpoints.
By the Erd\H{o}s-Szekeres theorem~\cite{ES35}, there is a monotone subsequence of order $t$.
If this subsequence is increasing, then $\cP$ contains a crossing subfamily of size $t$, and if it is decreasing, then $\cP$ contains a nested subfamily of size $t$.
\end{proof}

\begin{lemma}\label{lemma: chords separating}
Let $t\in\N$, $G$ be a graph, $L \subseteq V(G)$ be a linearly ordered set, and $\cP,\cQ$ be two pure $L$-linkages of size $4t$ such that
$\cP \cup \cQ$ is a $L$-linkage of size $8t$.
Then, there exist two pure linkages $\cP'\subseteq \cP,\cQ'\subseteq \cQ$ of size $t$ such that
\begin{itemize}
    \item  $I_{\cP'}$ and $I_{\cQ'}$ are disjoint if $\cP, \cQ$ are in series,
    \item  $I_{\cP'}$ is disjoint from $I_{\cQ'}^{\ell} \cup I_{\cQ'}^{r}$ if $\cP$ is in series and $\cQ$ is not in series,
    \item  $I_{\cQ'}$ is disjoint from $I_{\cP'}^{\ell} \cup I_{\cP'}^{r}$ if $\cQ$ is in series and $\cP$ is not in series, and
    \item $I_{\cP'}^{\ell},I_{\cP'}^{r},I_{\cQ'}^{\ell},I_{\cQ'}^{r}$ are pairwise disjoint if neither $\cP,\cQ$ is in series.
\end{itemize}
\end{lemma}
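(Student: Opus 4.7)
The plan is to case analyze on the types of $\cP$ and $\cQ$. Throughout, order each linkage by left endpoints, so $\cP=P_1<\cdots<P_{4t}$ and $\cQ=Q_1<\cdots<Q_{4t}$, and write $p_i^\ell,p_i^r,l_i,r_i$ for the relevant endpoints on $L$. The key technique is that every consecutive size-$t$ subsequence is a pure sublinkage of the parent type, and I will exploit the $4$-fold slack between $4t$ and $t$.

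\textbf{Both in series.} Merge the $4t$ right endpoints of $\cP$ with the $4t$ left endpoints of $\cQ$ along $L$ and let $s=|\{i:l_i\le p_t^r\}|$. If $s\le 3t$, then at least $t$ of the $l_i$'s exceed $p_t^r$, so $\cP'=\{P_1,\ldots,P_t\}$ paired with any $t$ such $\cQ$-paths gives $I_{\cP'}<I_{\cQ'}$. Otherwise $l_{3t+1}\le p_t^r<p_{3t+1}^\ell$, and taking $\cQ'=\{Q_1,\ldots,Q_t\}$ with $\cP'=\{P_{3t+1},\ldots,P_{4t}\}$ yields $I_{\cQ'}<I_{\cP'}$ since $q_t^r<q_{3t+1}^\ell=l_{3t+1}\le p_t^r<p_{3t+1}^\ell$. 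This also supplies the pigeonhole template used below.

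\textbf{Exactly one in series} (WLOG $\cP$). Set $a=|\{i:l_i\le p_t^r\}|$ and let $b$ count the $\cQ$-paths whose right endpoint lies past $p_{3t+1}^\ell$ (with the precise comparison adapted to whether $\cQ$ is crossing or nested). There are three subcases. If $a\le 3t$, use $\cP'=\{P_1,\ldots,P_t\}$ with $t$ $\cQ$-paths whose left endpoints exceed $p_t^r$; pureness of $\cQ$ forces their right endpoints to lie in the appropriate region, so $I_{\cP'}$ avoids both $I_{\cQ'}^\ell$ and $I_{\cQ'}^r$. If the symmetric $b$-condition holds, use $\cP'=\{P_{3t+1},\ldots,P_{4t}\}$ by a mirror argument. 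Otherwise both $\{i:l_i<p_{t+1}^\ell\}$ and its right-side analogue have size $\ge 3t+1$, and the inequality $(3t+1)+(3t+1)-4t\ge t$ lets me pick $t$ indices $i_1<\cdots<i_t$ of $\cQ$ placing $I_{\cP'}=[p_{t+1}^\ell,p_{2t}^r]$ between $I_{\cQ'}^\ell$ and $I_{\cQ'}^r$.

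\textbf{Neither in series.} Partition $\cP$ and $\cQ$ each into four consecutive size-$t$ blocks $\cP^{(1)},\ldots,\cP^{(4)}$ and $\cQ^{(1)},\ldots,\cQ^{(4)}$. In a pure-but-not-series linkage, the $i$-th block occupies a well-defined position in both the left-endpoint cluster $L_\cP$ and the right-endpoint cluster $R_\cP$ (with the ordering reversed in $R_\cP$ when $\cP$ is nested), so selecting $(\cP^{(i)},\cQ^{(j)})$ is a single combinatorial choice among $16$ candidate pairs. The four required disjointness conditions decompose into four interval-intersection questions, one between each of $L_\cP$--$L_\cQ$, $R_\cP$--$R_\cQ$, $L_\cP$--$R_\cQ$, and $R_\cP$--$L_\cQ$; applying the first case's argument with parameter $1$ to each cluster-pair bounds the number of ``bad'' $(i,j)$ per condition, and pigeonhole over the $16$ pairs then produces a simultaneous survivor. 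I expect this case to be the main obstacle: enumerating the possible relative orderings of the four clusters $L_\cP,R_\cP,L_\cQ,R_\cQ$ and verifying that the pigeonhole goes through in each requires careful bookkeeping, especially when $\cP$ and $\cQ$ have different non-series types so that block orderings in left and right clusters disagree.
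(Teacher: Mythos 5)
Your analysis of the two cases involving a series linkage is correct and close in spirit to the paper's: ordering by left endpoints, exploiting the $4t$-versus-$t$ slack, and an inclusion-exclusion/pigeonhole to place one block strictly between or to one side of the other family. The paper's handling of these cases is slightly different in the thresholds it uses (it branches on whether $q_{3t}^\ell$ lies right of $p_t^r$ and then on how many right endpoints of $\cQ$ exceed $p_{2t}^r$), but both routes are sound and essentially elementary.

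The real gap is in the ``neither in series'' case, exactly where you flag uncertainty. Your plan is to bound, for each of the four interval-intersection conditions ($L_\cP$--$L_\cQ$, $L_\cP$--$R_\cQ$, $R_\cP$--$L_\cQ$, $R_\cP$--$R_\cQ$), the number of bad pairs $(i,j)$, and then pigeonhole over the $16$ block pairs. But this accounting does not close: for a single condition, two families of four pairwise-disjoint intervals can have up to $7$ intersecting pairs (the bipartite intersection graph can be a path on $8$ vertices), so your per-condition bound is $7$, and $4\times 7 = 28 > 16$. There is no per-condition bound sharp enough to make this sum drop below $16$. The paper's resolution is a global argument rather than a per-condition one: put all $16$ intervals $X_i^\ell,X_i^r,Y_j^\ell,Y_j^r$ into one intersection graph $H$. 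Because the four $X^\ell$-blocks are pairwise disjoint, the four $X^r$-blocks are pairwise disjoint, and every left endpoint precedes every right endpoint (this uses only that $\cP$ and $\cQ$ are not in series), $H$ has no $X$--$X$ or $Y$--$Y$ edge, hence is bipartite; it is also an interval graph and therefore chordal, and a chordal triangle-free graph is a forest, so $H$ has at most $15$ edges. Each bad pair $(a,b)$ contributes at least one edge among the four possible edges between $\{X_a^\ell,X_a^r\}$ and $\{Y_b^\ell,Y_b^r\}$, and these four-edge slots are disjoint across the $16$ choices of $(a,b)$; with only $15$ edges available, some $(a,b)$ has none, and that pair works. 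You would need to replace your per-condition count with this global chordal/bipartite $\Rightarrow$ forest observation (or an equivalent sharpening of the total edge count to strictly below $16$) to complete the case.
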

\begin{proof}
We order the paths of $\cP$ and $\cQ$ as $\{P_1, \dots, P_{4t}\}$ and $\{Q_1, \dots, Q_{4t}\}$ according to the order of their left endpoints.  
Let $p_{i}^{\ell}$ and $p_{i}^{r}$ be the left and right endpoint of $P_i$ and $q_{i}^{\ell}$ and $q_{i}^{r}$ be the left and right endpoint of $Q_i$, respectively.

First suppose that $\cP$ is in series. If $q_{3t}^{\ell}$ is to the right of $p_{t}^{r}$, 
then $\cP'=\{P_1,\ldots,P_t\}$ and $\cQ'=\{Q_{3t+1},\ldots,Q_{4t}\}$ are two families of paths with the desired properties.
So we may assume that $q_{3t}^{\ell}$ is to the left to $p_{t}^{r}$.  If $\cQ$ is also in series, then we may take $\cP'=\{P_{t+1},\ldots,P_{2t}\}$ and $\cQ'=\{Q_1,\ldots,Q_{t}\}$.  Thus $\cQ$ is either nested or crossing.  If there are at least $3t$ right endpoints of $\cQ$ to the right of $p_{2t}^{r}$, then we may take $\cP'=\{P_{t+1},\ldots,P_{2t}\}$ and $\cQ'=\{Q_{t+1},\ldots,Q_{2t}\}$. 
Otherwise, we take $\cQ'$ to be a family of $t$ paths of $\cQ$ all of whose right endpoints are to the left of $p_{2t}^{r}$, and $\cP'=\{P_{2t+1},\ldots,P_{3t}\}$.

For the rest of the proof, we may assume that neither $\cP$ nor $\cQ$ are in series.
For $i \in [4]$ define $X_i^\ell:=[p_{1+(i-1)t}^{\ell}, p_{it}^{\ell}]$ and $X_i^r:=[p_{1+(i-1)t}^{r}, p_{it}^{r}]$. Similarly, define $Y_i^\ell:=[q_{1+(i-1)t}^{\ell}, q_{it}^{\ell}]$ and $Y_i^r:=[ q_{1+(i-1)t}^{r}, q_{it}^{r}]$.  It suffices to find indices $a$ and $b$ such that $X_a^\ell \cup X_a^r$ is disjoint from $Y_b^\ell \cup Y_b^r$.  

Let $H$ be the graph with vertex set $\{X_i^\ell,X_i^r,Y_i^\ell,Y_i^r\}_{i\in [4]}$
and two vertices are adjacent if they intersect in $L$ (considered as two intervals).
By construction, $H$ is an interval graph and bipartite.
It is well known that interval graphs are chordal (that is, they do not contain induced cycles of length at least $4$).
Thus, $H$ is a forest, because it is triangle-free.
As $H$ has 16 vertices, it has at most $15$ edges.

If $X_a^\ell \cup X_a^r$ is not disjoint from $Y_b^\ell \cup Y_b^r$ for some $a,b\in[4]$,
then $H_{a,b}=H[\{X_a^\ell, X_a^r,Y_b^\ell, Y_b^r\}]$ contains an edge.
As $H_{a,b}$ is edge-disjoint from $H_{a',b'}$ for $(a,b)\neq (a',b')$
and there are 16 tuples $(a,b)$ with $a,b\in[4]$, but only 15 edges in $H$,
there exist $(a,b)$ such that $H_{a,b}$ does not contain an edge.
Therefore, $X_a^\ell \cup X_a^r$ is disjoint from $Y_b^\ell \cup Y_b^r$ as desired.
\end{proof}

\begin{lemma}\label{lemma: chords making them disjoint}
Let $t\in \N$, let $\G_1$ and $\G_2$ be  groups and $(\dirG, \g)
$ be a $(\G_1 \oplus \G_2)$-labeled graph.
Let $S \subseteq V(G)$.
If $(\dirG, \g)$ contains $3t$ disjoint $\G_1$-non-zero $S$-paths and $t$ disjoint $\G_2$-non-zero $S$-paths, then $(\dirG, \g)$ contains $2t$ disjoint $S$-paths $P_1,\ldots,P_{2t}$ such that $P_i$ is $\G_1$-non-zero for $i\leq t$ and $\G_2$-non-zero for $ t<i\leq 2t$.
\end{lemma}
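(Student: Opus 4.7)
The plan is to argue by contradiction using Lemma~\ref{lemma: non-zero A-paths} together with an extremal/swap argument. Write $\cP = \{P_1, \ldots, P_{3t}\}$ for the given $\G_1$-non-zero $S$-paths and $\cQ = \{Q_1, \ldots, Q_t\}$ for the $\G_2$-non-zero ones, and assume that no family of $2t$ disjoint $S$-paths of the required form exists. Among all families $\cQ^{*}$ of $t$ disjoint $\G_2$-non-zero $S$-paths in $(\dirG, \g)$, fix one minimizing $|(V(\cQ^{*}) \setminus S) \cap V(\cP)|$; such a family exists since $\cQ$ itself is a witness.

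Set $U = V(\cQ^{*}) \setminus S$ and apply Lemma~\ref{lemma: non-zero A-paths} to $(\dirG - U, \g_1)$ with $A = S$ and $k = t$. Either we obtain $t$ disjoint $\G_1$-non-zero $S$-paths in $\dirG - U$, in which case together with $\cQ^{*}$ they give the desired $2t$ paths and we are done, or we obtain a vertex set $X$ with $|X| \leq 2t-2$ such that every $\G_1$-non-zero $S$-path of $(\dirG, \g)$ meets $X \cup U$. In the latter case, since the $3t$ paths of $\cP$ are pairwise disjoint and at most $|X| \leq 2t-2$ of them can hit $X$, at least $t+2$ of them meet $U$. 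By pigeonhole applied over the $t$ paths of $\cQ^{*}$, some $Q^{*}_j$ is met by at least $\lceil (t+2)/t \rceil \geq 2$ distinct paths of $\cP$; I would pick two, call them $P_a$ and $P_b$, together with intersection vertices $v_a \in V(P_a) \cap V(Q^{*}_j)$ and $v_b \in V(P_b) \cap V(Q^{*}_j)$.

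The final step is a rerouting argument: I would construct a new family $\widetilde{\cQ}$ of $t$ disjoint $\G_2$-non-zero $S$-paths with $|(V(\widetilde{\cQ}) \setminus S) \cap V(\cP)| < |(V(\cQ^{*}) \setminus S) \cap V(\cP)|$, contradicting the minimality of $\cQ^{*}$. The replacement $\widetilde Q$ for $Q^{*}_j$ is produced by a splice: follow $Q^{*}_j$ from one of its endpoints to $v_a$ (or $v_b$), then switch onto $P_a$ (or $P_b$) and continue to one of its endpoints in $S$. There are four natural splice configurations at each of $v_a, v_b$; by a case analysis on the resulting group values---exploiting that $P_a$ and $P_b$ are $\G_1$-non-zero while $Q^{*}_j$ is $\G_2$-non-zero, and that a suitable subpath of $Q^{*}_j$ near $v_b$ can be used as an alternate tail---I expect that in all configurations some splice produces a valid $\G_2$-non-zero $S$-path that strictly decreases the $\cP$-intersection count.

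The hard part will be this splice analysis: guaranteeing a choice that simultaneously (i) yields a $\G_2$-non-zero $S$-path, (ii) strictly reduces $|(V(\widetilde{\cQ}) \setminus S) \cap V(\cP)|$ rather than merely redistributing the intersection, and (iii) stays disjoint from $\cQ^{*} \setminus \{Q^{*}_j\}$. Condition (iii) may force a secondary exchange if $P_a$ or $P_b$ meets another $Q^{*}_i$, and condition (i) must be verified even in the non-abelian setting where orientation and starting-vertex effects matter. If a single splice fails in some configurations, an iterated swap using both $P_a$ and $P_b$ concurrently---or replacing multiple paths of $\cQ^{*}$ at once---should suffice, given the slack provided by having at least $t+2$ paths of $\cP$ meeting $U$ rather than just $t$.
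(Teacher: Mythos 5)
Your proposal has a genuine gap: the extremal measure you choose is incompatible with the splice you intend to perform.

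You fix $\cQ^{*}$ minimizing $|(V(\cQ^{*})\setminus S)\cap V(\cP)|$, i.e.\ you minimize the number of internal $\cQ^{*}$-vertices that lie on $\cP$. But your proposed splice follows $Q^{*}_j$ up to $v_a$ and then \emph{switches onto} a segment of $P_a$. Every internal vertex of that $P_a$-segment lies in $V(\cP)$, so the spliced-in piece contributes maximally to the quantity you are trying to shrink, while the discarded tail of $Q^{*}_j$ may have contributed little or nothing. There is no reason the measure decreases; generically it will increase. The paper uses the \emph{opposite} measure: among all families $\mathcal{R}$ of $t$ disjoint $\Gamma_2$-non-zero $S$-paths, minimize the number of edges of $\mathcal{R}$ that do \emph{not} lie on a path of the $\Gamma_1$-non-zero family. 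With that choice the splice is clean: the segment of the $\Gamma_1$-non-zero path you graft in is entirely contained in that family (contributing $0$), and the segment of $R_1$ you remove is shown to contain at least one edge outside the family, so the count strictly drops. If you invert your measure to count vertices (or edges) of $\cQ^{*}$ \emph{outside} $V(\cP)$ (or $E(\cP)$), the monotonicity you need will hold, and you must then verify the analogue of the paper's key claim that the removed tail really does contain something outside $\cP$; the paper gets this because it chooses $Q_1$ so that its endpoints are not on any $\mathcal{R}$-path and $r$ is the \emph{first} hit after $q$.

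Two further remarks. First, your detour through Lemma~\ref{lemma: non-zero A-paths} is unnecessary and slightly buggy: deleting $U=V(\cQ^{*})\setminus S$ rather than all of $V(\cQ^{*})$ allows the resulting $\Gamma_1$-non-zero paths to share an endpoint in $S$ with a path of $\cQ^{*}$, which would break disjointness; you would need $U=V(\cQ^{*})$. More importantly the lemma is not needed at all: since $\mathcal{R}$ has only $t$ paths, $|V(\mathcal{R})\cap S|\le 2t$, so if $\mathcal{R}$ meets more than $2t$ of the $3t$ disjoint $\Gamma_1$-non-zero paths, some such path $Q_1$ has both of its endpoints outside $V(\mathcal{R})$; otherwise at least $t$ of the $\Gamma_1$-paths avoid $\mathcal{R}$ and you are done. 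Second, your outline explicitly defers the entire group-value analysis of the splice and the disjointness check from the remaining $\cQ^{*}$-paths; in the paper these are exactly the two points that are handled by the careful choices of $q$ and $r$ (first hit after an endpoint not in $\mathcal{R}$), and the observation that at least one of the two possible tails of $R_1$ yields a $\Gamma_2$-non-zero path since $R_1$ itself is $\Gamma_2$-non-zero. As written, your proposal stops short of a proof precisely at these essential points.
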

\begin{proof}
Let $\cQ=\{Q_1,\ldots,Q_{3t}\}$ be disjoint $\G_1$-non-zero $S$-paths and
let $\mathcal{R}=\{R_1,\ldots,R_t\}$ be disjoint $\G_2$-non-zero $S$-paths such that the number of edges belonging to a path in $\mathcal{R}$ but not to a path $\mathcal{Q}$ is as small as possible.

If the paths in $\mathcal{R}$ intersect at most $2t$ paths of $\mathcal{Q}$, then we are done.  
Thus there is a path in $\mathcal{Q}$, say $Q_1$, that intersects a path in $\mathcal{R}$, and the endpoints of $Q_1$ do not belong to a path in $\mathcal{R}$.
Let $q$ be an endpoint of $Q_1$ and let $r$ be the first vertex of $Q_1$ after $q$ contained in a path in $\mathcal{R}$, say $R_1$. Note that the subpaths $R_1r$ and $rR_1$ of $R_1$
both contain at least one edge that is not contained in a path in $\cQ$.
Moreover, since $R_1$ is a $\G_2$-non-zero path, at least one of the paths $qQ_1r \cup R_1r$ or $qQ_1r \cup rR_1$ is $\G_2$-non-zero, say $qQ_1r \cup R_1r$.  Replacing $R_1$ by $qQ_1r \cup R_1r$ in $\mathcal{R}$ contradicts the choice of $\mathcal{R}$. 
\end{proof}

\begin{lemma}\label{lemma: chords complete}
Let $\G_1$ and $\G_2$ be  groups and $(\dirG, \g)$ be a $(\G_1 \oplus \G_2)$-labeled graph.
Let $L$ be a linearly ordered subset of vertices of $G$ and $t\in\N$.
If $(\dirG, \g)$ contains a $\G_1$-non-zero $L$-linkage of size $192t^3$ and a $\G_2$-non-zero $L$-linkage of size $64t^3$, 
then $(\dirG, \g)$ contains two $L$-linkages $\cP$ and $\cQ$ of size $t$ such that 
\begin{enumerate}[(i)]
\item $\cP\cup\cQ$ is an $L$-linkage of size $2t$,
\item the paths in $\cP$ and $\cQ$ are $\G_1$-non-zero and $\G_2$-non-zero, respectively,
\item $\cP,\cQ$ are both pure, and
\item \begin{itemize}
    \item  $I_{\cP}$ and $I_{\cQ}$ are disjoint if $\cP, \cQ$ are both in series,
    \item  $I_{\cP}$ is disjoint from $I_{\cQ}^{\ell},I_{\cQ}^{r}$ if $\cP$ is in series and $\cQ$ is not in series,
    \item  $I_{\cQ}$ is disjoint from $I_{\cP}^{\ell},I_{\cP}^{r}$ if $\cQ$ is in series and $\cP$ is not in series, and
    \item $I_{\cP}^{\ell},I_{\cP}^{r},I_{\cQ}^{\ell},I_{\cQ}^{r}$ are pairwise disjoint if neither $\cP$ nor $\cQ$ are in series.
\end{itemize}  
\end{enumerate}
\end{lemma}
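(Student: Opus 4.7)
The lemma is a bookkeeping exercise that combines Lemmas \ref{lemma: chords one type}, \ref{lemma: chords separating}, and \ref{lemma: chords making them disjoint} in the right order and with the right parameters. The plan is to first make the two color classes disjoint from each other, then purify each one, and finally apply the interval separation lemma.

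\medskip

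\emph{Step 1 (disjointness between colors).} Set $s := 64t^3$. The hypothesis gives us $3s = 192t^3$ disjoint $\G_1$-non-zero $L$-paths and $s = 64t^3$ disjoint $\G_2$-non-zero $L$-paths, which is exactly the input required by Lemma \ref{lemma: chords making them disjoint} applied to the vertex set $L$ with parameter $s$. This produces $2s$ pairwise disjoint $L$-paths that partition into a $\G_1$-non-zero linkage $\cP_0$ of size $64t^3$ and a $\G_2$-non-zero linkage $\cQ_0$ of size $64t^3$, with $\cP_0\cup\cQ_0$ still a linkage.

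\medskip

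\emph{Step 2 (purifying each color).} Apply Lemma \ref{lemma: chords one type} to $\cP_0$ with parameter $4t$; since $|\cP_0|=64t^3 = (4t)^3$, this yields a pure $\G_1$-non-zero sub-linkage $\cP_1 \subseteq \cP_0$ of size $4t$. Doing the same for $\cQ_0$ gives a pure $\G_2$-non-zero sub-linkage $\cQ_1 \subseteq \cQ_0$ of size $4t$. Because $\cP_1$ and $\cQ_1$ are sub-families of $\cP_0$ and $\cQ_0$, the union $\cP_1 \cup \cQ_1$ remains a linkage, of size $8t$.

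\medskip

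\emph{Step 3 (interval separation).} The pair $(\cP_1,\cQ_1)$ now matches the hypothesis of Lemma \ref{lemma: chords separating} verbatim: two pure linkages of size $4t$ whose union is a linkage of size $8t$. That lemma supplies pure sub-linkages $\cP \subseteq \cP_1$ and $\cQ \subseteq \cQ_1$, each of size $t$, such that the four cases of condition (iv) hold according to whether $\cP$ and $\cQ$ are in series. Condition (i) is inherited from Step 1, condition (ii) from the labels of $\cP_0,\cQ_0$, condition (iii) from Step 2, and condition (iv) is the direct output of Step 3, completing the proof.

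\medskip

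There is no real obstacle here — the only subtle point is parameter tracking. The cubic blow-up comes entirely from Lemma \ref{lemma: chords one type} (pure size $4t$ requires a linkage of size $(4t)^3=64t^3$), and the factor of $3$ on the $\G_1$-count comes from Lemma \ref{lemma: chords making them disjoint}, which together explain the constants $192t^3$ and $64t^3$ in the hypothesis. The only thing to verify along the way is that taking sub-linkages never spoils the interaction between the two color classes, which is immediate since disjointness and pureness are both preserved under passing to sub-families.
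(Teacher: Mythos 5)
Your proof is correct and takes essentially the same approach as the paper: apply Lemma~\ref{lemma: chords making them disjoint} with parameter $64t^3$ to obtain disjoint color classes of that size, then Lemma~\ref{lemma: chords one type} with parameter $4t$ (using $64t^3 = (4t)^3$) to purify each class, and finally Lemma~\ref{lemma: chords separating} to extract the interval-separated sub-linkages of size $t$. Your version is simply a more detailed write-up of the paper's three-line argument, with the parameter bookkeeping made explicit.
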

\begin{proof}
First use Lemma~\ref{lemma: chords making them disjoint} to obtain a $\G_1$-non-zero $L$-linkage $\cP''$ of size $64t^3$ and a $\G_2$-non-zero $L$-linkage $\cQ''$ of size $64t^3$ such that $\cP''\cup\cQ''$ is an $L$-linkage of size $128t^3$.
By Lemma~\ref{lemma: chords one type},
there are pure $L$-linkages $\cP'\subseteq \cP'',\cQ'\subseteq \cQ''$ each of size $4t$.
Finally, Lemma~\ref{lemma: chords separating} applied to $\cP',\cQ'$ completes the proof.
\end{proof}

We now show how two large non-zero linkages attaching to a wall can be used to find a $(\G_1,\G_2)$-clean pair of linkages for a subwall.  We first need an easy proposition on flat walls; we omit the proof.

\begin{proposition}\label{lemma: boundary cycle excluded}
Let $G$ be a graph and $r \in \N \sm \{1\}$.  
Let $W$ be a flat $r$-wall in $G$ and $(A, B)$ be a certifying separation for $W$.  
Let $X$ be the four corners of $W$ and let $C$ be the boundary cycle of $W$.  
Let $H$ be a connected subgraph of $G$ which is disjoint from $W$ and such that $V(H)$ has a neighbor in each of the four components of $C-X$.  
Then $H \subseteq A$.  Specifically, if $W$ is a $1$-contained subwall of a larger wall $W'$ with boundary cycle $C'$, then $C' \subseteq A$.
\end{proposition}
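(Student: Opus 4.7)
The plan is to argue by contradiction. Suppose $H\not\subseteq A$. Since $V(A\cap B)\subseteq V(O)\subseteq V(W)$ and $V(H)\cap V(W)=\emptyset$, we have $V(H)\cap V(A\cap B)=\emptyset$; combined with the connectivity of $H$ and the fact that $(A,B)$ is a separation, either $V(H)\subseteq V(A)\setminus V(B)$ (in which case every edge of $H$ has both endpoints outside $V(B)$, forcing $H\subseteq A$, a contradiction) or $V(H)\subseteq V(B)\setminus V(A)$. So I assume the latter, and then each of the four edges $e_i$ from $V(H)$ to the arc-neighbours $u_i$ lies in $B$.

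I would next construct four internally disjoint paths $P_1,\dots,P_4$ in $B$ from $V(H)$ to four distinct corners of the wall, by concatenating each $e_i$ with the subpath of the arc $a_i$ from $u_i$ to one of its endpoint corners; since the arcs are pairwise disjoint, the $P_i$ are internally disjoint and end at four distinct vertices of $V(A\cap B)$. This immediately forbids any elementary $V(A\cap B)$-reduction from absorbing all of $V(H)$ into its removed side, for such a side is attached to the rest of $B$ via at most three vertices, through which the four disjoint paths would all have to pass to reach the corners. A careful bookkeeping through the sequence of reductions then shows that in $G_0$ the four routes persist: segments that lie entirely in a removed piece are replaced by clique edges on the corresponding separator, yielding four internally disjoint paths in $G_0$ from the trace of $H$ to four distinct corners, each still traversing its respective arc $a_i$.

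The final step is a topological analysis in the disk drawing of $G_0$. The outer cycle $O$ is drawn inside $\Delta$ with its $V(A\cap B)$-vertices on $\partial\Delta$ in the cyclic order of $O$, so removing $W$ from $\Delta$ leaves two kinds of faces: the interior bricks of $W$, and the ``lens'' regions bounded between an arc of $O$ joining two consecutive vertices of $V(A\cap B)$ and the corresponding arc of $\partial\Delta$. The trace of $H$ is a connected subgraph disjoint from $V(W)\cup V(A\cap B)$, and, by planarity, no edge of the trace can cross an edge of $W$; hence the trace lives in a single brick or a single lens. The $O$-boundary of a lens lies in a single arc $a_i$, since the corners belong to $V(A\cap B)$ and thus terminate any such arc; the $O$-boundary of a brick meets at most two consecutive arcs, and only in the case of a corner brick. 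In either case the trace has neighbours on $O$ in at most two arcs of $C-X$, contradicting the four disjoint routes constructed above, so $H\subseteq A$.

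The main obstacle is carefully tracking the four disjoint paths through the sequence of elementary reductions, so that the planarity/lens argument can legitimately be applied to the trace of $H$ in $G_0$ rather than to $H$ itself; once that is handled, the brick/lens decomposition yields the contradiction topologically. For the ``specifically'' statement, apply the main claim with $H=C'$: the outer cycle $C'$ of $W'$ is connected, is disjoint from $W$ by the $1$-containment assumption, and the vertical and horizontal paths of $W'$ that meet each of the four sides of $W$ furnish neighbours of $V(C')$ in each of the four arcs of $C-X$.
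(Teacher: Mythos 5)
Your main strategy---reduce to $V(H)\subseteq V(B)\setminus V(A)$, build four internally disjoint routes to four distinct corners, and then derive a contradiction from the disk drawing of $G_0$---is the right shape of argument. However, there are two genuine gaps.

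First, the claim that ``in $G_0$ the four routes persist'' after the sequence of elementary reductions, together with the assertion that ``the trace of $H$ is a connected subgraph,'' is not justified. An elementary reduction may delete an arbitrary subset of $V(H)$ as long as it does not separate four disjoint routes at once; after several reductions the remaining part of $H$ need not be connected, and a route that dips into a removed piece and returns cannot simply be replaced by a clique edge while keeping it internally disjoint from the other three (the three clique vertices can be shared among several reroutings). You explicitly flag this as ``the main obstacle,'' and it really is the heart of the lemma; some actual argument is needed here. One workable fix is to reroute inside $H$ first so that the four paths emanate from a single vertex $d\in V(H)$, disjoint except at $d$, and then run an induction on the length of the reduction sequence: at each step, if $d$ survives you can replace the portions of the four paths inside the removed piece by clique edges (using that the removed piece attaches via at most three vertices and hence at most one of the four paths enters it), and if $d$ is removed, condition (iii) of the definition of elementary reduction gives a replacement vertex on the kept side.

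Second, the proof of the ``specifically'' statement is incorrect as written. You apply the main claim with $H=C'$ and justify the neighbour condition by saying ``the vertical and horizontal paths of $W'$ that meet each of the four sides of $W$ furnish neighbours of $V(C')$ in each of the four arcs of $C-X$.'' That conflates being connected by a path with being adjacent. If $W$ is $k$-contained in $W'$ for some $k\geq 2$ (which is allowed by the $1$-containment hypothesis), then no vertex of $C'$ is adjacent to any vertex of $C$ at all, so $V(C')$ has no neighbour in any arc of $C-X$. The correct move is to apply the main claim to the component $H$ of $W'-V(W)$ containing $C'$: this is connected, disjoint from $W$, and it is this larger subgraph (via the pieces of the horizontal and vertical paths of $W'$ leading into $W$) whose vertex set has a neighbour in each of the four arcs. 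Then $H\subseteq A$ gives $C'\subseteq A$ as a subcase.
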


\begin{lemma}\label{lemma: wall paths}
Let $\G_1$ and $\G_2$ be  groups and $(\dirG, \g)$ be a $(\G_1 \oplus \G_2)$-labeled graph.
Let $t\in \N\sm\{1\}$ and $W$ be a flat wall in $G$.
Suppose $W_1$ is a $1$-contained flat $10^6t^6$-subwall of $W$
and let $W_2$ be a $10t$-contained flat $10^2t$-subwall of $W_1$. For $i \in [2]$, let $(A_i, B_i)$ be a certifying separtion for $W_i$.  Let $N_i$ be the set of top nails of $W_i$ with respect to $W$.  
If $(\overrightarrow{B_1}, \g)$ is null-labeled and there exist  $\G_i$-non-zero $N_1$-linkages of size $10^5t^6$ in $(\overrightarrow{A_1} - (V(B_1) \setminus N_1), \g)$ for both $i\in [2]$,
then there exists a $(\G_1,\G_2)$-clean pair $(\cP,\cQ)$ of $N_2$-linkages for $\dirW_2$ of size $t$.
\end{lemma}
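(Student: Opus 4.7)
The plan proceeds in three phases, exploiting the abundance of non-zero linkages available at $N_1$ together with the null-labeled wall region of $W_1$ above $W_2$.

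\emph{Phase 1.} I apply Lemma~\ref{lemma: chords complete} to the two given $\G_i$-non-zero $N_1$-linkages of size $10^5t^6$. The size hypothesis $192 s^3, 64 s^3 \leq 10^5t^6$ is satisfied for a sufficiently large $s$ polynomial in $t$ (large enough to absorb the losses in Phase~3), yielding pure $N_1$-linkages $\cP_1, \cQ_1$ of size $s$, with $\cP_1$ being $\G_1$-non-zero and $\cQ_1$ being $\G_2$-non-zero, and with the four endpoint-intervals on $N_1$ satisfying the disjointness condition (item~(iv)) in the definition of a $(\G_1,\G_2)$-clean pair.

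\emph{Phase 2.} I route $\cP_1$ and $\cQ_1$ through the wall region of $W_1$ between its topmost horizontal path and the topmost horizontal path of $W_2$. Because $W_2$ is $10t$-contained in $W_1$, this strip contains at least $10t$ horizontal paths and has width $10^6t^6$, and every edge is null-labeled by the hypothesis on $\overrightarrow{B_1}$. Using these horizontal and vertical paths, each path of $\cP_1$ (respectively $\cQ_1$) is planarly extended to an $N_2$-path whose endpoints lie in the corresponding sub-interval of $N_2$. Planar routing preserves the purity type, null-labeled concatenations preserve group values, and the width $100t$ of $W_2$ comfortably accommodates the $O(t)$ extended paths of both families in disjoint sub-intervals. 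The resulting $N_2$-linkages $\cP_2, \cQ_2$ are internally disjoint from $B_2$, since the extensions lie strictly above $W_2$.

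\emph{Phase 3.} I pass to subfamilies of size $t$ satisfying condition~(iv) of $\G_i$-cleanness, namely that in the nested or crossing case all paths in $\cP_2$ share a common $\g_1$-value, and likewise for $\cQ_2$ with $\g_2$. The in-series case is automatic. In the other cases, I exploit the null-labeled wall region $W_1 \setminus W_2$ to rewire pairs of paths through shared wall segments, effectively collapsing the set of group values on each linkage at a bounded cost in size. Since the endpoint-intervals of $\cP_2$ and $\cQ_2$ on $N_2$ are disjoint, the rewirings for the two families can be carried out independently. The main obstacle is precisely this third phase: without finiteness assumptions on $\G_1$ and $\G_2$, a naive pigeonhole by group value is unavailable, and equalizing $\g_i$-values must instead be achieved by exploiting the combinatorial flexibility of the null-labeled wall together with the tangle structure $\cT_W$, possibly absorbing a shifting step into an equivalent relabeling of $\g$.
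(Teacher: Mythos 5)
Phases 1 and 2 of your outline agree with the paper's proof in spirit: you correctly begin with Lemma~\ref{lemma: chords complete} to extract two pure, endpoint-separated $N_1$-linkages, and you correctly observe that the null-labeled strip of $W_1$ between its top row and $W_2$ lets you push those linkages down to $N_2$ while preserving group values and purity type. (A minor remark: the paper extracts linkages of size $t^2$, not a vaguely specified $s$, and it performs the group-value manipulation \emph{before} routing through the wall region; reordering these two steps is harmless but your bookkeeping on sizes is left unspecified.)

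The genuine gap is in Phase~3, and you partly diagnose it yourself. You want to arrange that, in the nested or crossing case, all paths in $\cP$ have the same $\g_1$-value (and similarly for $\cQ$ and $\g_2$). You correctly note that a naive pigeonhole over group values fails when $\G_i$ is infinite, but your proposed remedy --- ``rewire pairs of paths through shared wall segments to collapse the set of group values,'' possibly using the tangle $\cT_W$ or a shift --- is not a mechanism; neither the tangle nor shifting can make two paths with distinct group values acquire equal ones, and nothing you have written explains why any rewiring would equalize values rather than just permute them. The paper's solution is different in kind: it does not attempt to equalize values at all. Starting from $t^2$ paths ordered by left endpoint, either some $t$ of them already share a $\g_1$-value (done), or else in every window of $t$ consecutive paths there are two, $P_{(i-1)t+1}$ and $P_{a_i}$ with $a_i\le it$, whose $\g_1$-values differ. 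Concatenating those two paths with the null-labeled segment of the top horizontal path of $W_1$ joining their right endpoints produces a new $N_1$-path whose $\g_1$-value is a difference of two distinct elements, hence non-zero. Because the windows are disjoint and the paths in $\cP_1$ are ordered by left endpoint, the $t$ resulting composite paths are \emph{in series}, so the ``same value'' clause of cleanness becomes vacuous. That change of linkage type is the missing idea: when equalizing is impossible, one switches to a series linkage for which no equalization is required.
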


\begin{proof}
First, we apply Lemma~\ref{lemma: chords complete} in $(\overrightarrow{A_1} - (V(B_1) \setminus N_1), \g)$ to obtain two $N_1$-linkages $\cP_1$ and $\cQ_1$ each of size $t^2$ such that
\begin{enumerate}[(i)]
\item $\cP_1\cup\cQ_1$ is a linkage of size $2t^2$,
\item the paths in $\cP_1$ and $\cQ_1$ are $\G_1$-non-zero and $\G_2$-non-zero, respectively,
\item $\cP_1,\cQ_1$ are pure, and
\item \begin{itemize}
    \item  $I_{\cP_1}$ and $I_{\cQ_1}$ are disjoint if $\cP_1, \cQ_1$ are in series,
    \item  $I_{\cP_1}$ is disjoint from $I_{\cQ_1}^{\ell},I_{\cQ_1}^{r}$ if $\cP_1$ is in series and $\cQ_1$ is not in series,
    \item  $I_{\cQ_1}$ is disjoint from $I_{\cP_1}^{\ell},I_{\cP_1}^{r}$ if $\cQ_1$ is in series and $\cP_1$ is not in series, and
    \item $I_{\cP_1}^{\ell},I_{\cP_1}^{r},I_{\cQ_1}^{\ell},I_{\cQ_1}^{r}$ are pairwise disjoint if neither $\cP_1$ nor $\cQ_1$ are in series.
\end{itemize}
\end{enumerate}
We regard each path in $\cP_1 \cup \cQ_1$ as being traversed from its left endpoint to its right endpoint.  
Next, we define two new families $\cP_2$ and $\cQ_2$ of paths.
If $\cP_1$ is in series, then let $\cP_2$ be an arbitrary subset of $t$ paths in $\cP_1$.
If $\cP_1$ is nested or crossing and contains $t$ paths with the same group value,
then let $\cP_2$ be a set of such paths.
In the other cases, we order the paths in $\cP$ as $P_1,\ldots,P_{t^2}$  according to their left endpoints.
For $i\in [t]$, choose $a_i > (i-1)t+1$ such that $\g_1(P_{a_i})\not=\g_1(P_{(i-1)t+1})$ with $a_i$ minimum.
Note that this implies that $a_i\leq it$.  Let $H$ be the topmost horizontal path of $W_1$.  
For each $i \in [t]$, combine $P_{(i-1)t+1}$, $P_{a_i}$ and the subpath of $H$ between the right endpoints of $P_{(i-1)t+1}$ and $P_{a_i}$ to a path
and let $\cP_2$ be the collection of these paths (observe that in this case the paths in $\cP_2$ are in series). 
We do the same accordingly for $\cQ_1$ to obtain $\cQ_2$. 
Note that $\cP_2\cup \cQ_2$ is a collection of $2t$ disjoint paths and all paths in $\cP_2$ are $\G_1$-non-zero and all paths in $\cQ_2$ are $\G_2$-non-zero.

Since $(\overrightarrow{B_1}, \g)$ is null-labeled, it is easy to see that $\cP_2\cup \cQ_2$ can be extended to a $(\G_1,\G_2)$-clean pair of linkages for $(\dirW_2, \g)$ by using paths in $W_1 - W_2$ and the fact that this subgraph is contained in $A_2$ by Proposition \ref{lemma: boundary cycle excluded}. 
\end{proof}

The following lemma is a simplified version of Lemma~\ref{lemma: wall paths}.
The proof is the same as for Lemma~\ref{lemma: wall paths}.
\begin{lemma}\label{lemma: wall paths1}
Let $\G$ be a group and $(\dirG, \g)$ be a $\G$-labeled graph.
Let $t\in \N\sm\{1\}$ and $W$ be a flat wall in $G$.
Suppose $W_1$ is a $1$-contained flat $10^6t^6$-subwall of $W$
and let $W_2$ be a $10t$-contained flat $10^2t$-subwall of $W_1$. 
For $i \in [2]$, let $(A_i, B_i)$ be a certifying separtion for $W_i$.  Let $N_i$ be the set of top nails of $W_i$ with respect to $W$.  
If $(\overrightarrow{B_1}, \g)$ is null-labeled and there exist $\G$-non-zero $N_1$-linkages of size $10^5t^6$ in $(\overrightarrow{A_1} - (V(B_1) \sm N_1), \g)$,
then there exists a clean $N_2$-linkage $\cP$ for $\dirW_2$ of size $t$.
\end{lemma}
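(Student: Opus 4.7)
The proof runs parallel to that of Lemma~\ref{lemma: wall paths}, with the simplification that only a single clean linkage is needed in place of a $(\G_1,\G_2)$-clean pair. Consequently, the argument involves only one pass of path-combination rather than two. The plan is as follows.

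First, I would apply Lemma~\ref{lemma: chords one type} to the hypothesized $\G$-non-zero $N_1$-linkage of size $10^5 t^6$ inside $(\overrightarrow{A_1}-(V(B_1)\sm N_1),\g)$; since $10^5 t^6 \geq (t^2)^3$, this yields a pure $\G$-non-zero sub-linkage $\cP_1$ of size $t^2$, whose paths remain internally disjoint from $B_1$. Second, I would refine $\cP_1$ to a size-$t$ linkage $\cP_2$ satisfying the group-value cleanliness condition, exactly as in the proof of Lemma~\ref{lemma: wall paths}: if $\cP_1$ is in series, take any $t$ of its paths; if $\cP_1$ is nested or crossing and at least $t$ of its paths share a common group value, take $t$ such paths; otherwise order $\cP_1$ by left endpoints as $P_1,\dots,P_{t^2}$, and for each $i\in[t]$ pick minimal $a_i$ with $(i-1)t+1 < a_i \le it$ and $\g(P_{a_i})\neq \g(P_{(i-1)t+1})$, and combine $P_{(i-1)t+1}$ and $P_{a_i}$ via the subpath of the topmost horizontal path of $W_1$ joining their right endpoints. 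This produces $t$ in-series $\G$-non-zero paths. In all three cases $\cP_2$ is pure, $\G$-non-zero, and satisfies the cleanliness group-value requirement.

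Third, I would extend $\cP_2$ to an $N_2$-linkage by routing each endpoint through $W_1-W_2$ down to a top nail of $W_2$. By Proposition~\ref{lemma: boundary cycle excluded}, since $W_2$ is a $10t$-contained (and therefore $1$-contained) subwall of $W_1$, the subgraph $W_1-W_2$ is contained in $A_2$, so the extended paths are internally disjoint from $B_2$. All connecting edges lie in $W_1\subseteq B_1$ and hence are null-labeled, so the group values and purity of $\cP_2$ are preserved. The resulting $N_2$-linkage of size $t$ is clean with respect to $(A_2,B_2)$.

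The only technical point worth flagging is that, in the combined-pair sub-case of the second step, the combined walk must be $\G$-non-zero even when $\G$ is non-abelian. Because the horizontal connector is null-labeled and $\g(W^{-1})=-\g(W)$ in any group, the combined walk has value $\g(P_{(i-1)t+1})-\g(P_{a_i})$, which is nonzero by choice of $a_i$. This is the same mild obstacle handled implicitly in Lemma~\ref{lemma: wall paths}, and it is the only place where non-abelianness requires more than bookkeeping.
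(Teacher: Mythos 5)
Your proposal is correct and takes essentially the same approach as the paper, which dispatches this lemma with the one-line remark that its proof is the same as that of Lemma~\ref{lemma: wall paths}. You correctly identify that the single-group analogue of Lemma~\ref{lemma: chords complete} is Lemma~\ref{lemma: chords one type}, and the subsequent path-combination and extension steps (including the non-abelian group-value check, which is equally present in Lemma~\ref{lemma: wall paths}) mirror the paper's argument.
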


\section{Wall Lemmas} \label{sec:walllemmas}

In this section, we present several results concerning walls which we will use in the proof 
of Theorem \ref{thm: main wall}.
Recall, for a $t$-wall $W$, there exists a tangle $\cT$ of order $t+1$ associated with $W$.
Explicitly, $\cT$ consists of all $t$-separations $(A,B)$ of $W$ such that $B$ contains an entire horizontal path (or equivalently an entire vertical path) of $W$.
If $W$ is a minor of $G$ (possibly $G=W$), then we let $\cT_W$ denote the tangle in $G$ induced by $\cT$.

\begin{theorem}[\cite{RS91} (7.5)]\label{lemma: tangle wall}
For every positive integer $t$, there is an integer $T(t)$ with the following property.
For every graph $G$ and every tangle $\cT$ of order $T(t)$ in $G$, 
there is a $t$-wall $W$ in $G$ such that $\cT_W$ is a truncation of $\cT$.
\end{theorem}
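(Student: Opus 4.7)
The plan is to follow the classical approach of Robertson and Seymour. The key intuition is that a tangle of high order witnesses a highly connected region of $G$, and this region must itself contain a large wall whose associated tangle aligns with the original.

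I would proceed in two main steps. First, I would use the equivalence established by Robertson and Seymour between tangles of order $k$ in $G$ and having branch-width at least $k$: a tangle of order $T(t)$ forces $G$ to admit no branch-decomposition of width less than $T(t)$, and any branch-decomposition is ``pointed at'' by $\cT$ in a constrained way. Second, I would invoke the Excluded Grid (Wall) Theorem: graphs of sufficiently large branch-width contain a $t$-wall as a subgraph. Setting $T(t)$ large enough (polynomially in $t$, with the bound governed by the Excluded Grid Theorem) produces a $t$-wall $W$ inside $G$.

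The main obstacle is verifying that $\cT_W$ is a truncation of $\cT$, rather than merely some tangle in $G$. I would establish this by contradiction. Suppose $(A,B)$ is a $t$-separation of $G$ that lies in $\cT_W$ but not in $\cT$, so $(B,A) \in \cT$. By the definition of $\cT_W$, the side $B$ contains an entire horizontal (or vertical) path of $W$. On the other hand, $(B,A) \in \cT$ makes $B$ a $\cT$-small side. Combining this with two further separations cutting off other large pieces of the wall from itself, all three $\cT$-small sides would cover $G$, contradicting the tangle axiom (T3). To make this covering argument go through, one needs the wall $W$ to contain several disjoint horizontal/vertical paths beyond the size of the separator $|A \cap B| \le t-1$, which is precisely what choosing $T(t)$ large enough above $t$ ensures; the reason $W$ was extracted from $\cT$ rather than an arbitrary tangle is used implicitly here, via the compatibility of the Excluded Wall Theorem with the branch-decomposition witnessed by $\cT$.

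The careful execution of this plan is exactly Lemma (7.5) of~\cite{RS91}, so rather than reproduce their technical argument in full, the cleanest route is simply to cite their result. I would therefore state the lemma with $T(t)$ being whatever function Robertson and Seymour supply, and refer the reader to their paper for the detailed construction of the wall and the verification of the tangle-truncation property.
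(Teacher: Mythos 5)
The paper provides no proof of this statement---it is cited directly as (7.5) of~\cite{RS91}---and your proposal ultimately reaches the same conclusion, so the citation is the right move. However, the proof sketch you embed on the way there has a genuine gap worth flagging: the two-step plan of (1) extract \emph{some} $t$-wall from the Excluded Grid Theorem using that $\cT$ witnesses high branch-width, and then (2) verify a posteriori that $\cT_W$ is a truncation of $\cT$ via the (T3) covering argument, does not work, because nothing in step (1) forces the wall to live on the $\cT$-large side. Concretely, let $G$ consist of two large disjoint walls $W_1$ and $W_2$ joined by a single edge, and let $\cT$ be (a truncation of) the tangle $\cT_{W_1}$. Then $G$ has large branch-width and the Excluded Grid Theorem applied blindly may perfectly well return $W_2$. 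Taking $(A,B)$ with $W_1 \subseteq A$, $W_2 \subseteq B$ and $V(A)\cap V(B)$ the two endpoints of the joining edge, we have $(A,B)\in\cT_{W_2}$ (since $B$ contains the horizontal paths of $W_2$) while $(B,A)\in\cT$, so $\cT_{W_2}$ is certainly not a truncation of $\cT$; and the contradiction you propose never materializes, because the two further separations cutting up $W_2$ leave $W_1$, hence $A$, entirely uncovered. The real content of Robertson and Seymour's (7.5) is precisely that the wall can be located \emph{inside the tangle}: it is a tangle-aware grid theorem, not the plain Excluded Grid Theorem followed by a verification step. Since you do ultimately defer to their proof rather than rely on your sketch, your proposal is acceptable, but the sketch should not be mistaken for an outline of their argument.
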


\begin{lemma}\label{lemma: wall tangle}
Let $s,t$ be positive integers and $G$ be a graph.
If $W$ is a $t$-wall in $G$ and $W'$ is an $s$-subwall of $W$, then 
the tangle $\cT_{W'}$ is a restriction of $\cT_W$.
\end{lemma}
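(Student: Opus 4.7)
The plan is to show $\cT_{W'} \subseteq \cT_W$ as sets of separations of $G$, which by definition is exactly what it means for $\cT_{W'}$ to be a restriction of $\cT_W$. Unwinding the definition of the induced tangle, an $s$-separation $(A,B)$ of $G$ lies in $\cT_W$ if and only if $B$ contains an entire horizontal path of $W$, and analogously for $\cT_{W'}$. Since $s\leq t$, every $s$-separation is in particular a $t$-separation, so the task reduces to proving the following: if $(A,B)$ is an $s$-separation of $G$ and $B$ contains a horizontal path of $W'$, then $B$ contains a horizontal path of $W$.

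I would argue by contradiction via the tangle axiom (T1). Suppose $(A,B)\in \cT_{W'}$ but $(A,B)\notin \cT_W$. Since $(A,B)$ is a $t$-separation, (T1) applied to $\cT_W$ yields $(B,A)\in \cT_W$, so $A$ contains an entire horizontal path $H_A$ of $W$, while $B$ still contains a horizontal path $H'$ of $W'$.

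To derive the contradiction I would use the $s+1$ vertical paths $V_0,\dots,V_s$ of $W$ that each contain a vertical path of $W'$. These are pairwise distinct: by the definition of a subwall, the vertical paths of $W'$ are subpaths of the vertical paths $P_j^{(v)}$ for $j\in I^{(v)}$, and $|I^{(v)}|=s+1$ establishes the bijection. Each $V_i$ meets $H_A$ at a vertex $u_i\in V(A)$, since every vertical path of $W$ crosses every horizontal path of $W$ in exactly one vertex. On the other side, the vertical path of $W'$ contained in $V_i$ crosses $H'$ in a single vertex $v_i$, and this vertex lies on $V_i$, so $v_i\in V(V_i)\cap V(B)$. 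The standard fact that $A$ and $B$ are edge-disjoint with $V(A)\cup V(B)=V(G)$ forces any path that has a vertex in $V(A)$ and a vertex in $V(B)$ to contain a vertex in $V(A\cap B)$; applied to $V_i$, this produces $w_i\in V(A\cap B)\cap V(V_i)$. Since the $V_i$'s are pairwise disjoint, the $w_i$'s are distinct, so $|V(A\cap B)|\geq s+1$, contradicting $|V(A\cap B)|\leq s$.

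The only subtlety is ensuring each $V_i$ genuinely meets $H'$, but this is automatic because the vertical paths of the wall $W'$ cross every horizontal path of $W'$ at exactly one vertex, and that crossing vertex sits on $V_i$. No other obstacle arises, so this is essentially the entire proof.
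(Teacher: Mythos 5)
Your proof is correct, and it takes a route that is dual to the paper's rather than identical. The paper argues directly: it considers the $s+1$ \emph{horizontal} paths of $W$ that each contain a horizontal path of $W'$, uses the pigeonhole bound $|V(A\cap B)|\le s$ to find one such path $P_j$ that avoids the separator, and then argues (via the tangle property that $(A,B)$ and $(B,A)$ cannot both lie in $\cT_{W'}$) that $P_j$ must lie in $B$ rather than $A$, which immediately gives $(A,B)\in\cT_W$. You instead argue by contradiction, invoke axiom (T1) for $\cT_W$ to produce a horizontal path $H_A$ of $W$ inside $A$, and then use the $s+1$ \emph{vertical} paths of $W$ carrying vertical paths of $W'$: each such vertical path connects a vertex of $H_A\subseteq A$ to a vertex of $H'\subseteq B$ and so must hit $V(A\cap B)$, and disjointness of the vertical paths forces $|V(A\cap B)|\ge s+1$, a contradiction. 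Both proofs ultimately rest on comparing $s+1$ disjoint paths against the size-$s$ separator, but the paper does a ``find one path that survives'' pigeonhole on horizontal paths while you do a ``every path must die'' counting argument on vertical paths. One tiny inaccuracy worth noting: in a wall (unlike a grid) a vertical path need not cross an interior horizontal path in \emph{exactly} one vertex — the intersection can be a short segment — but all you actually use is that the intersection is nonempty, so this does not affect the argument.
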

\begin{proof}
Let $(A, B)$ be an $s$-separation in $G$ and assume that $(A,B)\in \cT_{W'}$.   Let $P_0, \dots, P_{s}$ be the 
horizontal paths of $W$ such that each contains as a subpath a horizontal path of $W'$.
Since, $|V(A) \cap V(B)| \leq s$, there is some $P_j$ such that $V(P_j)$ is disjoint from $V(A) \cap V(B)$.
By the definition of $\cT_{W'}$ it follows that $P_j \subseteq B$, and so $(A, B) \in \cT_W$, as required. 
\end{proof}

Let $W$ be an $r$-wall and let $X$ be the branch vertices of the subdivision of the elementary $r$-wall.  That is, $X$ is the set of vertices of $W$ corresponding to the vertices of the elementary $r$-wall before subdividing edges.  Let $x,y \in X$ be contained in a common vertical path $P$ and common brick $B$ such that the subpath $P'$ of $P$ with ends $x$ and $y$ does not have any internal vertex in $X$ and $P'$ is not contained in the boundary cycle of $W$.  
Let $Q$ be a $W$-path with ends $x$ and $y$ and let $W'$ be the wall obtained from $W$ by deleting the edges and internal vertices of $P'$ and adding the path $Q$.  In effect, we reroute the vertical path $P$ through the path $Q$ as opposed to the subpath $P'$. 
We say that $W'$ is obtained from $W$ via a \emph{local rerouting}.  We require the following easy proposition.
We leave the proof to the reader.

\begin{proposition} \label{prop: rerouting}
Let $W$ be a flat wall in a graph $G$. If $W'$ is a local rerouting of $W$, then $W'$ is also a flat wall in $G$ and $\cT_W=\cT_{W'}$.
\end{proposition}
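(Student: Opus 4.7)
My plan is to take the separation $(A,B)$ that already certifies $W$ is flat and show that the \emph{same} pair $(A,B)$ certifies $W'$ is flat, and moreover that the induced tangles $\cT_W$ and $\cT_{W'}$ are literally the same set of separations of $G$. The crucial observation is that a local rerouting only modifies the interior of a single internal vertical path: the boundary cycle $O$, the corners, the nails, the horizontal paths, and every other vertical path of the wall are preserved setwise as subgraphs of $G$.

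The main obstacle is to verify $V(W')\subseteq V(B)$, which reduces to showing $V(Q)\subseteq V(B)$. I would argue as follows. The endpoints $x,y$ of $Q$ lie in $V(W)\subseteq V(B)$, while the internal vertices of $Q$ are, by definition of a $W$-path, disjoint from $V(W)\supseteq V(O)\supseteq V(A\cap B)$. Suppose some internal vertex of $Q$ lies in $V(A)\sm V(B)$. Traversing $Q$ from $x$, let $v_a\to v_{a+1}$ be the first step that leaves $V(B)$; then $v_{a+1}\in V(A)\sm V(B)$, the edge lies in $E(A)$, and so $v_a\in V(A\cap B)\subseteq V(O)$. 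Since $V(O)\cap \mathrm{int}(Q)=\es$, we must have $v_a=x$, forcing $x\in V(O)$. A symmetric argument from $y$ forces $y\in V(O)$. But $x,y$ are consecutive branch vertices on an internal vertical path of an $r$-wall (with $r\geq 2$, which is the only interesting case), so at most one of them can sit on the boundary cycle: a contradiction. Hence $V(Q)\subseteq V(B)$. Because every edge of $Q$ incident to an internal vertex then has both ends in $V(B)\sm V(A)$, such an edge must belong to $E(B)$; the two edges of $Q$ at $x$ and $y$ likewise lie in $E(B)$ by the same endpoint argument. Thus $Q\subseteq B$.

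With $Q\subseteq B$ in hand, verifying the three conditions of Definition~\ref{def: flat wall} for $(A,B)$ with respect to $W'$ is routine: the outercycle of $W'$ is the outercycle of $W$, so $V(A\cap B)\subseteq V(O_{W'})$; the choice of nails and corners for $W'$ can be taken equal to those for $W$ (they depend only on the unmodified top/bottom horizontal paths and left/right vertical paths), and hence all lie in $V(A)$; the subgraph $B$ itself is unchanged, so the required $V(A\cap B)$-reduction drawable in a disk is the same as for $W$. Therefore $(A,B)$ certifies $W'$ is flat.

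Finally, for $\cT_W=\cT_{W'}$, I would unwind the definition of the tangle induced by a subgraph (viewed as a minor via deleting excess edges and vertices). For an order-$t$ separation $(X,Y)$ of $G$, one reads off that $(X,Y)\in \cT_W$ iff $Y\cap W$ contains an entire horizontal path of $W$, and analogously for $\cT_{W'}$. Since rerouting modifies only (part of) a vertical path, $W$ and $W'$ have exactly the same horizontal paths as subgraphs of $G$; the two membership conditions are therefore identical, giving $\cT_W=\cT_{W'}$.
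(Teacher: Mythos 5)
The paper leaves this proof to the reader, so there is no authorial argument to compare against; I will assess your proposal on its own terms. Your treatment of the flatness part is sound: reusing the certifying separation $(A,B)$ works, and the reduction to showing $Q\subseteq B$ together with the observation that at most one of $x,y$ lies on the outer cycle (an internal vertical path of an $r$-wall, $r\geq 2$, meets the outer cycle only at its two endpoints, which are never consecutive branch vertices) is correct.

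There is, however, a genuine gap in the tangle part. You assert that ``since rerouting modifies only (part of) a vertical path, $W$ and $W'$ have exactly the same horizontal paths as subgraphs of $G$.'' This is false in general: the horizontal and vertical paths of a wall are \emph{not} edge-disjoint. An interior branch vertex $v$ has degree $3$, and the horizontal path through $v$ and the vertical path through $v$ each use two of the three incident edges, so by pigeonhole they share one. Consequently the subdivided edge $P'$ of $P$ may simultaneously lie on a non-boundary horizontal path $P_i^{(h)}$, and rerouting then changes $P_i^{(h)}$ too. (In the paper's actual applications, in Lemma~\ref{lemma: one group wall} and Lemma~\ref{lemma: two group wall}, the rerouted subdivided edge has its two ends on distinct horizontal paths and so is not such a shared edge; your argument works there, but the proposition as stated covers all local reroutings.) The conclusion $\cT_W=\cT_{W'}$ is nevertheless true and your approach can be repaired: at most one horizontal path changes, say $P_i^{(h)}$ becomes $P_i'^{(h)}$, and these two paths share every branch vertex of $P_i^{(h)}$, of which there are more than $t$. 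If some $t$-separation $(A,B)$ had $(A,B)\in\cT_W$ and $(B,A)\in\cT_{W'}$, then $B$ would contain some $P_j^{(h)}$ and $A$ some $P_k'^{(h)}$; if $j\neq i$ or $k\neq i$ one of these is a horizontal path common to $W$ and $W'$ and one gets both $(A,B)$ and $(B,A)$ in the same tangle, contradicting (T3) (together with the trivial separation $(\emptyset,G)$). So $j=k=i$, and then $V(A)\cap V(B)$ contains all of the more-than-$t$ common branch vertices of $P_i^{(h)}$ and $P_i'^{(h)}$, contradicting the order bound.
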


We continue with two lemmas about walls which are the counterparts 
to Lemma~\ref{lemma: odd Kt minor} and \ref{lemma: odd Kt minor2} about $K_t$-minors.

\begin{lemma}\label{lemma: one group wall}
Let $t\in \N$ and $t\ge 3$, let $\G$ be a group and $(\dirG, \g)$ be a $\G$-labeled graph.
Let $(\dirW, \g)$ be a flat $3t^2$-wall in $(\dirG, \g)$.  Then there exists a flat $t$-wall $(\dirWone, \g)$ with certifying separation $(A_1, B_1)$ such that either
\begin{enumerate}[(i)]
\item the block of $(\overrightarrow{B_1}, \g)$ containing  $(\dirWone, \g)$ is $\G$-bipartite, or
\item $(\dirWone, \g)$ is facially $\G$-non-zero.
\end{enumerate}
Moreover, $\zT_{W_1}$ is a restriction of $\zT_W$ and the boundary cycle of $W$ is contained in $A_1$.  
\end{lemma}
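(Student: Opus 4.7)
My plan is to classify the bricks of $W$ and the facial cycles of its planar interior by $\G$-value, and then to extract a $t$-subwall $W_1$ whose local structure is either uniformly $\G$-non-zero (case (ii)) or uniformly $\G$-zero (case (i)).

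Let $(A,B)$ be a certifying separation for the flat $3t^2$-wall $W$, and let $G_0$ be a planar reduction of $B$ drawn in a disk $\Delta$. After shifting along a spanning tree of $W$, I may assume that tree is null-labeled, so every brick of $W$, and in fact every facial cycle of $G_0$, has a well-defined $\g$-value. Colour each facial cycle of $G_0$ \emph{red} if it is $\G$-non-zero and \emph{blue} if it is $\G$-zero, and partition the brick grid of $W$ into a $3t\times 3t$ array of $9t^2$ disjoint $t$-subwalls, restricting attention to those which are $1$-contained in $W$ so that the outercycle of $W$ avoids the chosen subwall.

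If some such $t$-subwall has all bricks red, I take it as $W_1$ and am in case (ii). Its certifying flat separation $(A_1,B_1)$ is inherited from $(A,B)$ by placing the portion of $W$ outside $W_1$ into $A_1$; by Proposition~\ref{lemma: boundary cycle excluded} this yields a valid certifying separation, the outercycle of $W$ lies in $A_1$, and $\cT_{W_1}$ is a restriction of $\cT_W$ by Lemma~\ref{lemma: wall tangle}.

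Otherwise, every $t$-subwall contains a blue brick, and I aim to produce $W_1$ such that \emph{every} facial cycle of $G_0$ meeting the interior of $W_1$ is blue. Once this is achieved, the cycle space of the block of $B_1$ containing $W_1$ is generated, via the planar embedding of $G_0$, by these blue facial cycles, so the block is $\G$-bipartite and we are in case (i). The main obstacle is precisely this step: a $t$-subwall with all blue bricks may still contain red interior faces of $G_0$ inside those bricks, so controlling bricks alone is not enough. I would address this by combining local rerouting (Proposition~\ref{prop: rerouting}) to simplify the planar interior of candidate $t$-subwalls, together with a pigeonhole argument that trades off bricks against their interior faces, exploiting the slack between the $3t^2$ size of $W$ and the target size $t$ to guarantee that a clean subwall exists.
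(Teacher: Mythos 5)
Your plan inverts the logic of the paper and leaves the hard case as an acknowledged gap. The dichotomy you set up — either some $t$-subwall has all bricks $\G$-non-zero (case (ii)), or every $t$-subwall has a $\G$-zero brick, in which case you try to manufacture case (i) — does not match the two conclusions of the lemma. A subwall with a zero brick is very far from having a $\G$-bipartite block: even if you could arrange that every facial cycle of $G_0$ meeting the subwall's interior is zero, the block of $B_1$ contains, in addition to the planar part drawn in $\Delta$, whatever got summed off through the $1$-, $2$-, and $3$-separations of the flat reduction, and a $\G$-non-zero cycle can sit entirely inside one of those summed-off pieces where your face-colouring argument sees nothing. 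You flag the obstruction ("a $t$-subwall with all blue bricks may still contain red interior faces"), but the proposed fix — "a pigeonhole argument that trades off bricks against their interior faces" — is not described and I do not see how it could work, since there is no bound on how many red faces can hide inside a single blue brick, nor does it address the summed-off pieces at all.

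The paper's proof uses rerouting in precisely the opposite direction from what you sketch. For each of $(t+1)^2$ candidate $t$-subwalls $W_{i,j}$ sitting inside the bricks of a coarser $(t+1)$-subwall $W_2$, it fixes a certifying separation $(A_{i,j},B_{i,j})$ with $|V(B_{i,j})|$ minimum and asks whether the block of $B_{i,j}$ containing $W_{i,j}$ is $\G$-bipartite. If yes for some $(i,j)$, that subwall satisfies (i) directly. If no for all $(i,j)$, each such block contains a $\G$-non-zero cycle $C_{i,j}$, and then one sequentially reroutes the right vertical path of each brick $D_{i,j}$ of $W_2$ through $C_{i,j}$ (using two disjoint connecting paths inside $B_{i,j}\cup H_{i,j}$) so that the $(i,j)$-th brick becomes $\G$-non-zero; after deleting one trailing row and column, the result is a facially $\G$-non-zero flat $t$-wall, giving (ii). So the correct dichotomy is on whether each subwall's certifying block has a non-zero cycle, not on the colours of the bricks already present in $W$; and rerouting is used to pull non-zero cycles \emph{into} the wall, not to "simplify the planar interior." You would need to replace your case split with this one and supply the rerouting construction to close the gap.
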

\begin{proof}
Let $(\dirW, \g)$ be a flat $3t^2$-wall in $(\dirG, \g)$.  
The collection of all $((2t-1)i + 1)$-th horizontal paths and all $(2tj+1)$-th vertical paths for $i,j\in \{0, \dots, t+1\}$
induces a 1-contained flat $(t+1)$-subwall $(\dirW_2, \g)$ of $(\dirW, \g)$.  
Note the interior of each brick $D_{i,j}$ of $(\dirW_2, \g)$
 contains a $t$-subwall $(\overrightarrow{W_{i,j}}, \g)$ of $(\dirW, \g)$ for all $i, j \in [t+1]$.  As a subwall of $W$, the wall $W_{i, j}$ is flat.  Fix $(A_{i, j}, B_{i, j})$ to be a certifying separation for $W_{i, j}$ which minimizes $|V(B_{i, j})|$.  This ensures that $B_{i, j}$ is connected and therefore $B_{i, j}$ is disjoint from $B_{i', j'}$ if either $i \neq i'$ or $j \neq j'$.   
 
Assume that there exists $i,j\in\ [t+1]$ such that the block of $(\overrightarrow{B_{i, j}}, \g)$ containing $(\overrightarrow{W_{i,j}}, \g)$  is $\G$-bipartite.  As $W_{i, j}$ is a subwall of $W$, it follows that $\zT_{W_{i, j}}$ is a restriction of $\zT_W$ by Lemma \ref{lemma: wall tangle} and the boundary cycle of $W$ is contained in $A_{i, j}$ by Proposition \ref{lemma: boundary cycle excluded}.  Thus, the theorem holds.

We may therefore assume that for all $i, j \in [t+1]$, the block of $(\overrightarrow{B_{i, j}}, \g)$ containing $(\overrightarrow{W_{i,j}}, \g)$ also contains a $\G$-non-zero cycle $C_{i,j}$.  Let $H_{i, j}$ be the union of subpaths of the horizontal paths of $W$ that are $W_{2}$-paths which meet $W_{i, j}$. Let $i, j \in [t]$. 
Let $R_{i,j}$ be the subpath of the right vertical path of $D_{i,j}$ with endpoints in distinct horizontal paths of $W_2$.  By our choice of $i$ and $j$, the path $R_{i, j}$ is not contained in the boundary cycle of $W_2$.  Observe that there are two disjoint $R_{i,j}$--$C_{i,j}$ paths contained
in $B_{i,j} \cup H_{i, j}$.  Therefore, since $C_{i,j}$ is $\G$-non-zero, there is a local rerouting of $(\dirW_2, \g)$ along $R_{i,j}$ such that the $(i, j)$-th brick is $\G$-non-zero.  Note as well by the construction and Lemma~\ref{lemma: boundary cycle excluded} that after the local rerouting, the resulting wall is disjoint from the boundary cycle of $W$.

We sequentially perform these local reroutings along the right vertical path $R_{i,j}$ of each brick of $(\dirW_2, \g)$ in lexicographic order $(1,1), (2,1), \dots, (t,t)$.  By Lemma~\ref{lemma: wall tangle} and Proposition~\ref{prop: rerouting}, the resulting $(t+1)$-wall is flat and the induced tangle is a restriction of $\cT_W$.  Moreover, all the facial cycles except the last vertical and horizontal row are $\G$-non-zero.  
To complete the proof, we delete the $(t+2)$-th horizontal and $(t+2)$-th vertical path to get a wall $(\dirWone, \g)$ which is facially non-zero and flat, satisfying (ii).
Fix a certifying separation $(A_1, B_1)$ of $(\dirWone, \g)$.  Let $D_1$ be the boundary cycle of $W_1$.  We conclude that the boundary cycle $D$ of $W$ is contained in $A_1$ by Proposition~\ref{lemma: boundary cycle excluded} applied to the component of $W-D_1$ containing $D$.  This completes the proof.
\end{proof}

We now extend Lemma \ref{lemma: one group wall} to the case when the graph is $(\G_1,\G_2)$-group-labeled.  
The proof follows the proof of Lemma \ref{lemma: one group wall}, but the construction is slightly more complicated as we must use Lemma \ref{lemma: non-zero brick} to reroute the bricks to be $(\G_1,\G_2)$-non-zero.  

\begin{lemma}\label{lemma: two group wall}
Let $\G_1$ and $\G_2$ be  groups, $(\dirG, \g)$ be a $(\G_1 \oplus \G_2)$-labeled graph,  
and $t\in \N$ with $t\ge 10$.  
Let $(\dirW, \g)$ be a flat $9t^3$-wall in $(\dirG, \g)$.  
Then there exists a flat $t$-wall $(\dirW_1, \g)$ with certifying separation $(A_1, B_1)$ that arises from a $t$-subwall of $W$ via local rerouting such that one of the following statements holds.
\begin{enumerate}[(i)]
	\item $(\dirW_1, \g)$ is facially $(\G_1,\G_2)$-non-zero,
	\item for some $i\in [2]$, the wall $(\dirW_1, \g)$ is facially $\G_i$-non-zero and the block of $(\overrightarrow{B_1}, \g)$ containing $(\dirW_1, \g)$  is $\G_{3-i}$-bipartite, or 
	\item the block of $(\overrightarrow{B_1}, \g)$ containing $(\dirW_1, \g)$ is $(\G_1 \oplus \G_2)$-bipartite.
\end{enumerate}
Moreover, $\cT_{W_1}$ is a restriction of $\cT_W$.
\end{lemma}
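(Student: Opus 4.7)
The plan is to mirror the construction of Lemma~\ref{lemma: one group wall}, with the reroute step upgraded to handle both groups simultaneously using Lemma~\ref{lemma: non-zero brick}. Starting from the given flat $9t^3$-wall $W$, I select every $\alpha$-th horizontal and every $\alpha$-th vertical path of $W$ for a suitable spacing $\alpha$ of order $6t^2$, forming a $(t+1)$-subwall $W_2$ whose bricks are large enough that each brick $D_{i,j}$ contains a $3t^2$-subwall $W_{i,j}$ of $W$ in its interior; the hypothesis $9t^3$ is exactly what is needed for this fit. For each $(i,j)$, fix a certifying separation $(A_{i,j}, B_{i,j})$ of $W_{i,j}$ with $|V(B_{i,j})|$ minimum (so the $B_{i,j}$ are pairwise disjoint), and write $U_{i,j}$ for the block of $B_{i,j}$ containing $W_{i,j}$. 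I then split into three cases based on the structure of the blocks $U_{i,j}$.

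First, if some $U_{i_0,j_0}$ is $(\G_1\oplus\G_2)$-bipartite, I take $W_1$ to be any $t$-subwall of $W_{i_0,j_0}$; its certifying block is a subgraph of $U_{i_0,j_0}$, hence $(\G_1\oplus\G_2)$-bipartite, giving conclusion (iii), with the tangle restriction coming from Lemma~\ref{lemma: wall tangle}. Second, if some $U_{i_0,j_0}$ is $\G_i$-bipartite but not $\G_{3-i}$-bipartite, I apply Lemma~\ref{lemma: one group wall} to the $3t^2$-wall $W_{i_0,j_0}$ with respect to $\g_{3-i}$: the resulting flat $t$-wall $W_1$ is either facially $\G_{3-i}$-non-zero with block still $\G_i$-bipartite (giving conclusion (ii)), or its block is $\G_{3-i}$-bipartite, which combined with the inherited $\G_i$-bipartiteness yields conclusion (iii).

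The third and central case is when every $U_{i,j}$ contains both a $\G_1$-non-zero and a $\G_2$-non-zero cycle. Here I modify each brick $D_{i,j}$ of $W_2$ by a local rerouting of its right vertical path $R_{i,j}$, carried out in lexicographic order, so that after all reroutes each brick is $(\G_1,\G_2)$-non-zero. If $U_{i,j}$ contains a $(\G_1,\G_2)$-non-zero cycle, the Lemma~\ref{lemma: one group wall} reroute applied to this cycle suffices directly. Otherwise, $U_{i,j}$ contains both a $\G_1$-non-zero $\G_2$-zero cycle $C^1$ and a $\G_1$-zero $\G_2$-non-zero cycle $C^2$. Using the $2$-connectedness of $U_{i,j}$, I extract disjoint subcycles of $C^1$ and $C^2$ together with four disjoint connecting paths to $R_{i,j}$ (viewed as part of the cycle $C = D_{i,j}$) in the prescribed cyclic order, and invoke Lemma~\ref{lemma: non-zero brick} to produce a $(\G_1,\G_2)$-non-zero cycle; the portion of this cycle inside $U_{i,j}$ provides the $W$-path used in the local rerouting. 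After processing every brick and discarding the final row and column of $W_2$, exactly as in Lemma~\ref{lemma: one group wall}, one obtains the required flat $t$-wall $W_1$, matching conclusion (i). Proposition~\ref{prop: rerouting} and Lemma~\ref{lemma: wall tangle} together yield the tangle restriction.

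The hard part will be the reroute in the third case when $U_{i,j}$ contains no $(\G_1,\G_2)$-non-zero cycle: one must combine the two one-coordinate cycles $C^1$ and $C^2$ so that the rerouted brick is simultaneously $\G_1$-non-zero and $\G_2$-non-zero. Lemma~\ref{lemma: non-zero brick} is tailored for exactly this, but setting up its hypotheses---four disjoint paths to disjoint copies of $C^1,C^2$ in the prescribed cyclic arrangement on $R_{i,j}$---and translating its output cycle into a valid local rerouting of the wall require careful use of the $2$-connectedness of $U_{i,j}$.
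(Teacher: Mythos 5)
Your outline mirrors the paper's strategy up to the rerouting step, and the case analysis (brick block is $(\G_1 \oplus \G_2)$-bipartite, is $\G_i$-bipartite only, or is neither) is correct. However, there is a genuine gap in the central case, and it is not a cosmetic one.

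You place a \emph{single} $3t^2$-subwall $W_{i,j}$ inside each brick $D_{i,j}$ of $W_2$, and then, when the block $U_{i,j}$ has no $(\G_1,\G_2)$-non-zero cycle, you try to feed a $\G_1$-non-zero $\G_2$-zero cycle $C^1$ and a $\G_1$-zero $\G_2$-non-zero cycle $C^2$ into Lemma~\ref{lemma: non-zero brick}. That lemma requires the three cycles $C, C_1, C_2$ to be \emph{pairwise disjoint} and the four connecting paths $P_1,P_1',P_2,P_2'$ to be pairwise disjoint and land on $C$ in the cyclic order $p_1,p_1',p_2,p_2'$. Nothing in your setup supplies this: $C^1$ and $C^2$ live in the same block $U_{i,j}$ and may intersect. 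Your proposed fix of ``extract disjoint subcycles of $C^1$ and $C^2$'' by $2$-connectedness is not a valid move --- a cycle has no proper subcycles, and a $2$-connected block need not contain any two disjoint cycles at all (a theta graph is a small obstruction; more importantly, there is no reason a $\G_1$-non-zero $\G_2$-zero cycle disjoint from a $\G_1$-zero $\G_2$-non-zero cycle exists just because each type exists separately). You flag this as ``the hard part,'' but it is not filled in, and I do not see how to fill it in from a single subwall without essentially a new lemma.

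The paper sidesteps exactly this by placing \emph{two disjoint} $3t^2$-subwalls $W_{3,i,j}$ and $W_{4,i,j}$, stacked one above the other inside each brick (which is why the spacing is $\approx 8t^2$ rather than your $\approx 6t^2$, and why the hypothesis is $9t^3$). Choosing the certifying separations $(A_{k,i,j},B_{k,i,j})$ with $|V(B_{k,i,j})|$ minimum makes the two $B$'s disjoint, so the cycle $C_{3,i,j}$ (taken $\G_k$-zero, $\G_{3-k}$-non-zero in the lower block) and $C_{4,i,j}$ (taken $\G_k$-non-zero in the upper block) are automatically disjoint, and the vertical stacking makes the four attachment paths $P_{3,1,i,j},P_{3,2,i,j},P_{4,1,i,j},P_{4,2,i,j}$ hit $P_{i,j}$ in exactly the cyclic order Lemma~\ref{lemma: non-zero brick} demands, with no extra argument. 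To repair your proof you should replace the single $W_{i,j}$ per brick with this pair of stacked subwalls and re-run your case analysis on the pair of blocks; the rest of your structure then goes through.

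One smaller remark: the paper also has an intermediate case (a block contains no cycle that is non-zero in exactly one coordinate) handled by applying Lemma~\ref{lemma: one group wall} with $\G = \G_1 \oplus \G_2$, which yields conclusion~(i) directly. Your Case~3 folds this into ``$U_{i,j}$ contains a $(\G_1,\G_2)$-non-zero cycle, reroute directly,'' which is fine for the facial-non-zero conclusion but you should make sure the resulting rerouted brick is genuinely $(\G_1,\G_2)$-non-zero, not merely $(\G_1\oplus\G_2)$-non-zero --- the paper's argument uses the ``no cycle zero in exactly one coordinate'' hypothesis precisely to upgrade facial $(\G_1\oplus\G_2)$-non-zero to facial $(\G_1,\G_2)$-non-zero, and you need an analogous upgrade.
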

\begin{proof}
Let $(\dirW, \g)$ be a flat $9t^3$-wall in $(\dirG, \g)$.
The collection of all $((8t^2-1)i+2)$-th horizontal paths and $(8t^2j+1)$-th vertical paths of $(\dirW, \g)$  for all $i,j\in\{0,\ldots,t+1\}$ induces a flat $(t+1)$-subwall $(\dirW_2, \g)$ of $(\dirW, \g)$.

For all $i,j \in [t+1]$, the interior of every brick $D_{i,j}$ of $(\dirW_2, \g)$ contains two disjoint flat $3t^2$-subwalls $(\dirW_{3,i,j}, \g)$ and $(\dirW_{4,i,j}, \g)$ such that they use the same set of vertical paths of $W$, and $W_{4,i,j}$ is above $W_{3,i,j}$.
For $k \in \{3,4\}$, fix a certifying separation $(A_{k, i, j}, B_{k, i, j})$ of $W_{k, i, j}$ which minimizes $|V(B_{k, i, j})|$.

Suppose that for some $i,j \in [t]$, the block of $(\overrightarrow{B}_{3,i,j}, \g)$ containing $(\dirW_{3,i,j}, \g)$ is $\G_1$-bipartite.
By Lemma~\ref{lemma: one group wall} applied to $(\dirW_{3,i,j}, \g_2)$ in the subgraph $(\overrightarrow{B}_{3, i, j}, \g)$, there exists a flat  $t$-wall $(\dirW_1, \g)$ with certifying separation $(A_1, B_1)$ in $B_{3,i,j}$ such that the block of $(\overrightarrow{B_1}, \g)$ containing $(\dirW_1, \g)$ is $\G_2$-bipartite or $(\dirWone, \g)$ is facially $\G_2$-non-zero. 
As the boundary cycle of $W_{3, i, j}$ is contained in $A_1$, it follows that $(A_1 \cup A_{3, i, j}, B_1)$ is a certifying separation for $W_1$ in $G$.  
Moreover $\zT_{W_1}$ is a restriction of $\zT_{3, i, j}$ and therefore also of $\zT_W$.

Consider the two possible outcomes of Lemma~\ref{lemma: one group wall}.  
In the first case, the block of $(\overrightarrow{B_1}, \g)$ containing $(\dirW_1, \g)$ is $(\G_1 \oplus \G_2)$-bipartite, satisfying $(iii)$.  
In the second case, $(\dirWone, \g)$ is facially $\G_2$-non-zero and the block of $(\overrightarrow{B_1}, \g)$ containing $(\dirWone, \g)$ is $\G_1$-bipartite, satisfying $(ii)$.  
We conclude, by symmetry, 
that we may assume that for all $i,j \in [t]$ and $k \in \{3,4\}$, the subgraph $(\overrightarrow{B}_{k,i,j}, \g)$
is neither $\G_1$-bipartite nor $\G_2$-bipartite.

Suppose next that for some $i,j \in [t]$ and $k \in \{3,4\}$, the block of $(\overrightarrow{B}_{k,i,j}, \g)$ containing $(\dirW_{k,i,j}, \g)$ does not contain a cycle $C$ such that $C$ is 
\begin{itemize}
    \item $\G_1$-non-zero and $\G_2$-zero, or
    \item $\G_1$-zero and $\G_2$-non-zero.
\end{itemize}
By applying Lemma~\ref{lemma: one group wall} to $W_{k, i, j}$ in $(\overrightarrow{B}_{k,i,j}, \g)$ with $\G_1\oplus\G_2$ playing the role of $\G$,
we obtain a $t$-wall $(\dirW_1, \g)$ with certifying separation $(A_1, B_1)$ of $B_{k, i, j}$ such that either the block of $(\overrightarrow{B}_{1}, \g)$ containing  $(\dirWone, \g)$ is $(\G_1 \oplus \G_2)$-bipartite, or
$(\dirWone, \g)$ is facially $(\G_1 \oplus \G_2)$-non-zero.  
As above, in each case, the boundary cycle of $W_{k, i, j}$ is contained in $A_1$ and so $(A_{k, i, j} \cup A_1, B_1)$ is a certifying separation that $W_1$ is flat as a subgraph of $G$.
In the first case, we satisfy conclusion $(iii)$.  
If the latter case holds, we note that $(\dirWone, \g)$ is actually facially $(\G_1, \G_2)$-non-zero, since it does not contain a cycle that is zero in exactly one coordinate, and hence we satisfy conclusion $(i)$.  

Therefore, we may assume that for all $i,j \in [t]$, there is a cycle $C_{3,i,j}$ in the block of $(\overrightarrow{B}_{3, i, j}, \g)$ containing $(\dirW_{3,i,j}, \g)$ that is $\G_k$-zero and $\G_{3-k}$-non-zero for some $k\in [2]$.  
Let $C_{4,i,j}$ be a cycle in the block of $(\overrightarrow{B}_{4,i,j}, \g)$ containing $(\dirW_{4,i,j}, \g)$ that is $\G_k$-non-zero.  

Let $P_{i,j}$ be the subpath of the right vertical path of the brick $D_{i,j}$ with endpoints in distinct horizontal paths of $W_3$.
It is easy to see that there exist four disjoint paths $P_{3,1,i,j},P_{3,2,i,j},P_{4,1,i,j},P_{4,2,i,j}$ such that the following hold.
See Figure~\ref{fig: rerouting} for illustration.
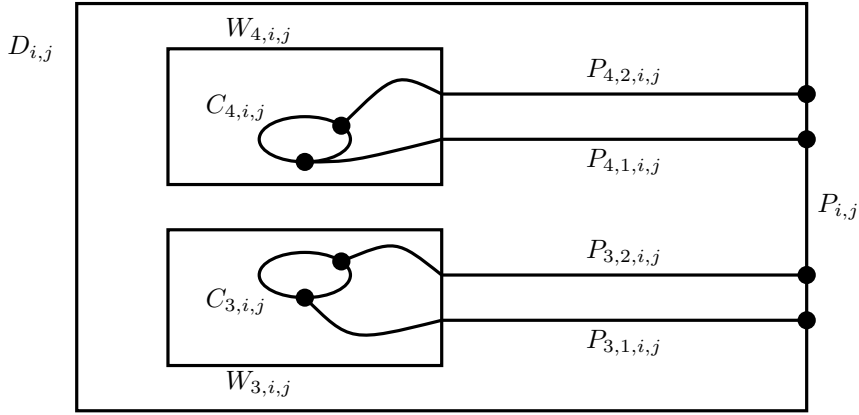
\begin{figure}[t]
\centering
\begin{tikzpicture}
\def\height{1}
\def\width{1}
\def\sizeofwalla{12}
\def\sizeofwallb{12}

\begin{scope}[scale=0.6]

\draw[very thick] (0,0) rectangle (16,9);
\draw[very thick] (2,1) rectangle (8,4);
\draw[very thick] (2,5) rectangle (8,8);

\draw[very thick] (5,3) ellipse (1 and 0.5);
\draw[very thick] (5,6) ellipse (1 and 0.5);

\draw[very thick] (8,2) -- (16,2);
\draw[very thick] (8,3) -- (16,3);
\draw[very thick] (8,6) -- (16,6);
\draw[very thick] (8,7) -- (16,7);

\draw[very thick] (8,2).. controls (6,1.5) .. (5,2.5);
\draw[very thick] (8,3).. controls (7,3.8) .. (5.8,3.3);

\draw[very thick] (8,6).. controls (6,5.5) .. (5,5.5);
\draw[very thick] (8,7).. controls (7,7.5) .. (5.8,6.3);

\draw (-1,8) node {$D_{i,j}$};

\draw (16.7,4.5) node {$P_{i,j}$};

\draw (12,1.5) node {$P_{3,1,i,j}$};
\draw (12,3.5) node {$P_{3,2,i,j}$};
\draw (12,5.5) node {$P_{4,1,i,j}$};
\draw (12,7.5) node {$P_{4,2,i,j}$};

\draw (3.5,2.4) node {$C_{3,i,j}$};
\draw (3.5,6.7) node {$C_{4,i,j}$};

\draw (4,0.6) node {$W_{3,i,j}$};
\draw (4,8.4) node {$W_{4,i,j}$};

\fill[very thick] (16,2) circle (0.2);
\fill[very thick] (16,3) circle (0.2);
\fill[very thick] (16,6) circle (0.2);
\fill[very thick] (16,7) circle (0.2);

\fill[very thick] (5,2.5) circle (0.2);
\fill[very thick] (5.8,3.3) circle (0.2);
\fill[very thick] (5,5.5) circle (0.2);
\fill[very thick] (5.8,6.3) circle (0.2);
\end{scope}

\end{tikzpicture}
\caption{The figures displays the brick $D_{i,j}$ together with the walls $W_{1,i,j}, W_{4,i,j}$ in the proof of Lemma~\ref{lemma: two group wall}.}\label{fig: rerouting}
\end{figure}
\begin{itemize}
	\item $P_{3,1,i,j}$ and $P_{3,2,i,j}$ join $C_{3,i,j}$ and $P_{i,j}$,
	\item $P_{4,1,i,j}$ and $P_{4,2,i,j}$ join $C_{4,i,j}$ and $P_{i,j}$, and
	\item the paths $P_{3,1,i,j},P_{3,2,i,j},P_{4,1,i,j},P_{4,2,i,j}$ end in this order on $P_{i,j}$.
\end{itemize}
Next we reroute similarly as in the proof of Lemma~\ref{lemma: one group wall}.
We start by applying Lemma~\ref{lemma: non-zero brick} 
to the outer cycle of $B_{1,1}$ and $P_{3,1,1,1}$, $P_{3,2,1,1}$, $P_{4,1,1,1}$, $P_{4,2,1,1}$, $C_{3,1,1}$, $C_{4,1,1}$.  
This gives a local rerouting of $P_{1,1}$ such that the brick
$D_{1,1}$ has been modified to a brick that is a $(\G_1,\G_2)$-non-zero.\footnote{
Note that in the application of Lemma~\ref{lemma: non-zero brick}, we may possibly have to interchange the role of $\G_1$ and $\G_2$ in the statement.
}
We then continue iteratively in lexicographic order to find a local rerouting for all bricks $D_{i, j}$ for $i, j \in [t]$.  
Note that once a brick is adjusted, it is never modified again.  At the end, we obtain a flat $(t+1)$-subwall where every facial cycle is non-zero except possibly the last vertical column or horizontal row.  
After deleting the $(t+2)$-horizontal and $(t+2)$-vertical path, we obtain a flat $t$-wall $(\dirW_1, \g)$ of $(\dirG, \g)$ that is facially $(\G_1, \G_2)$-non-zero.  By construction and Proposition \ref{prop: rerouting}, $\zT_{W_1}$ is a restriction of $\zT_W$ as required.  
\end{proof}

\begin{lemma}\label{lemma: zero walls}
Let $t,r\in\N$ such that $r\geq 3t$ and $t \ge 2$.  
Let $\G$ be a group, and let $(\dirG, \g)$ be a $\G$-labeled graph.
Let $W$ be a flat wall in $G$ with certifying separation $(A, B)$ such that $(\overrightarrow{B}, \g)$ is null-labeled
and $W$ $2t$-contains an $r$-subwall $W_1$.
Let $N_1$ be the top nails of $W_1$ with respect to $W$.  Let $(A_1, B_1)$ be a certifying separation for $W_1$.  
Let $X$ be a set of vertices of $G$ meeting all
$\G$-non-zero $N_1$-paths in $(\dirG-(V(B)\setminus N_1), \g)$.
If $|X|\leq 2t-2$, then the $\cT_{W}$-large block of $(\dirG-X, \g)$ is $\G$-bipartite.
\end{lemma}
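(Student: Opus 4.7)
The plan is to argue by contradiction: assume the $\cT_W$-large block $U$ of $(\dirG-X,\g)$ is not $\G$-bipartite, and pick a $\G$-non-zero cycle $C$ in $U$. The aim is to produce a $\G$-non-zero $N_1$-path in $(\dirG-(V(B)\setminus N_1),\g)$ that is disjoint from $X$, which will contradict the defining property of $X$.

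Since $(\overrightarrow{B},\g)$ is null-labelled, every cycle whose edges all lie in $B$ is $\G$-zero, so $C$ must use at least one edge of $E(A)\setminus E(B)$. The crucial geometric fact is that the subwall $W_1$ is surrounded by a shell of $W$ of thickness at least $2t$, consisting of the $2t$ horizontal and vertical paths of $W$ immediately on each side of $W_1$. This shell is a subgraph of $B$, hence null-labelled, and since $|X|\leq 2t-2$ Menger's theorem inside the shell gives enough disjoint rerouting options to push any segment of $C$ that enters the forbidden region $V(B)\setminus N_1$ out through the shell while avoiding $X$. Because the rerouting uses only null-labelled edges, the group-value is preserved, and we obtain a $\G$-non-zero cycle $C'$ contained in $(\dirG-(V(B)\setminus N_1))-X$.

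To convert $C'$ into an $N_1$-path, note that $|N_1|\geq r\geq 3t$ while $|X|\leq 2t-2$, so $|N_1\setminus X|\geq t+2$. Using the $2$-connectivity of $U$ together with the wall structure of the shell, we select two internally disjoint paths $R_1,R_2$ in $(\dirG-(V(B)\setminus N_1))-X$ from distinct vertices $n_1,n_2\in N_1\setminus X$ to distinct attachment points $c_1,c_2$ on $C'$. These points split $C'$ into two arcs $C^+,C^-$ with $\gamma(C^+)+\gamma(C^-)=\gamma(C')\neq 0$. Concatenating $R_1$, one of the two arcs, and the reverse of $R_2$ produces two candidate $N_1$-paths whose group-values differ by $\pm\gamma(C')\neq 0$, so at least one of them is $\G$-non-zero. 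This path lies in $(\dirG-(V(B)\setminus N_1))-X$, contradicting the assumption that $X$ meets every such path.

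The main obstacle is carrying out the rerouting of $C$ and the extraction of $R_1,R_2$ simultaneously, subject to the constraints of avoiding $X$, staying inside $(\dirG-(V(B)\setminus N_1))$, and maintaining internal disjointness. With shell thickness $2t$ against an avoidance budget of $|X|\leq 2t-2$, the slack is minimal, so this accounting is the most delicate part; once the disjoint paths are secured, the group-value difference principle finishes the argument.
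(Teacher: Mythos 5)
Your contradiction setup and the observation that $C$ must leave $B$ (because $B$ is null-labeled) are fine, and the final group-value argument (two candidate $N_1$-paths whose values differ by $\pm\gamma(C')\neq 0$) is the right kind of finishing move. But the central step of your proof does not hold up, and it is also not what the paper does.

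You claim that, because $W_1$ is $2t$-contained and $|X|\leq 2t-2$, Menger's theorem ``inside the shell'' lets you reroute every segment of $C$ that enters $V(B)\setminus N_1$ back out through the shell, producing a $\G$-non-zero cycle $C'$ in $(\dirG-(V(B)\setminus N_1))-X$. This is not justified. The shell is a null-labeled annulus of wall paths surrounding $W_1$ inside $B$, and it certainly provides a cycle $D$ and two paths $\overline{D}$ from $D$ to $N_1$ avoiding $X$. But the shell gives no leverage against arbitrary excursions of $C$ into $B$: the certifying separation $(A,B)$ says nothing about how far into $B$ the cycle $C$ may wander (for example, through the interior of the flat region, or through pieces of $B$ hanging off small separations), and there is no reason that a segment of $C$ going deep into $B$ can be replaced by a null-labeled detour that stays in the shell and dodges $X$. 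In general such a cycle $C'$ need not exist. Your subsequent step has the same problem in a different guise: you invoke the $2$-connectivity of $U$ to get two disjoint paths $R_1,R_2$ from $N_1$ to $C'$, but $U$ is a block of $G-X$, not of $(\dirG-(V(B)\setminus N_1))-X$, so $2$-connectivity of $U$ does not give you paths that avoid $V(B)\setminus N_1$.

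The paper sidesteps rerouting entirely and instead exploits the planar structure in the definition of flatness. Using the $(A\cap B)$-reduction of $B$ embedded in a disk, the cycle $D$ (coming from clean shell paths) bounds a subdisk $\Delta'$; one then defines a subgraph $H$ of $B$ consisting of everything ``inside'' $\Delta'$ together with pieces attached across small ($\le 3$) separations. The key facts are: $H\subseteq B$ is null-labeled, $W_1\subseteq H$, and $D$ acts as a combinatorial boundary between $H$ and the rest of $G$. Since $C\not\subseteq H$, either $C$ is disjoint from $H$ (then two disjoint $D$--$C$ paths in the block $U$, concatenated with $\overline{D}$ and part of $C$, yield a $\G$-non-zero $N_1$-path avoiding $X$), or $D$ chops $C$ into segments alternately inside and outside $H$; the inside segments are all $\G$-zero, so some outside segment is $\G$-non-zero, and again combining it with $\overline{D}$ yields the forbidden $N_1$-path. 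So the disk embedding, not a Menger-style rerouting, is the mechanism that controls how $C$ can meet $B$. Your outline is missing that mechanism, and the slot you filled with ``Menger inside the shell'' is exactly where the proof needs the flat-wall disk structure.
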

\begin{proof}
Let $X$ be a set of at most $2t-2$ vertices meeting every $\G$-non-zero $N_1$-path in $(\dirG-(V(B)\setminus N_1), \g)$.
Let $(\dirU, \g)$ be the $\cT_{W}$-large block of $(\dirG-X, \g)$.
For a contradiction, suppose $(\dirU, \g)$ contains a $\G$-non-zero cycle $C$.

Because $|X|\leq 2t-2$ and $W_1$ is $2t$-contained in $W$,
there is a vertical path of $W$ to the left of $W_1$ and one to the right of $W_1$ and also a horizontal path of $W$ below $W_1$ and one above $W_1$, all having an empty intersection with $X$.
Clearly, the union of these four paths contains a cycle $D$ that is contained in $U$.
Moreover, since there are at least two vertical paths of $W$ containing a vertex of $N_1$ and disjoint from $X$, there are two disjoint paths in $W$ linking $D$ and two vertices of $N_1$.
Let the union of these two paths and $D$ be denoted by $\overline{D}$. Since $\overline{D}$ is contained in $(\dirW, \g)$, all edges of $\overline{D}$ are null-labeled. 

Consider the subgraph $B$ of $G$.  By assumption, there exists an $(A\cap B)$-reduction $B'$ of $B$ which embeds in a closed disk $\Delta$.  The graph $B'$ is obtained from $B$ by repeatedly performing elementary $V(A \cap B)$-reductions. Thus the graph $B'$ is a subgraph of $B$ with some additional edges added through the elementary reductions.  
The cycle $D$ corresponds to a cycle $D'$ of $B'$ which bounds a disk $\Delta'$ in $\Delta$.  
We want to consider the subgraph $H$ of $B$ which can be thought of as laying `inside' the disk $\Delta'$.  We begin by defining $H'$ to be the subgraph of $B' \cap B$ embedded in the disc $\Delta'$.  
Define $H$ to be the union of the subgraph $H'$ along with $B^*$ for any separation $(A^*, B^*)$ of $G$ of order at most $3$ where $V(A^* \cap B^*)$ is contained in $V(H')$.  
Observe that the cycle $C$ is not a subgraph of $H$ because $H$ is a subgraph of $B$ and therefore $\G$-bipartite.  Observe as well that $W_1$ is a subgraph of $H$.  

Suppose that $C$ is disjoint from $H$.  
Because $U$ is a block, there are two disjoint paths in $U$ linking $D$ and $C$.
Combining these paths with suitable portions of $\overline{D}$ and $C$ yields a non-zero $N_1$-path in $(\dirG-(V(B_1)\setminus N_1), \g)$ that avoids all vertices in $X$, which is a contradiction.
Therefore, we may assume that $D$ decomposes $C$ into a family of internally disjoint paths $P_1, \dots, P_{2n}$ that alternate between being contained in $H$ and being disjoint from $H$ (except for their endpoints).  
All the paths contained in $H$ are $\G$-zero since $(\overrightarrow{B}, \g)$ is null-labeled.  
Therefore, there is a path $P \in \mathcal{P}$ that is disjoint from $H$ (other than its endpoints) and $\G$-non-zero.
Combining $P$ with a suitable portion of $\overline{D}$, again leads to a $\G$-non-zero $N_1$-path in $(\dirG-(V(B_1)\setminus N_1), \g)$ that avoids all vertices in $X$,
which is a contradiction.
\end{proof}

\section{Proof of our Flat Wall Theorem} \label{sec:thmmain}

We are now ready to prove Theorem \ref{thm: main wall}, which we restate for the reader's convenience.

\begin{reptheorem}{thm: main wall}
For every $t\in \N$, there exist integers $T(t)$ and $g(t)$
with the following property. 
Let $\G_1$ and $\G_2$ be  groups and $(\dirG, \g)$ be a $(\G_1 \oplus \G_2)$-labeled graph.
If $(\dirG, \g)$ contains a $T(t)$-wall $(\dirW, \g)$,
then one of the following statements holds.
\begin{enumerate}[(a)]
	\item There is a $(\G_1,\G_2)$-odd $K_t$-model $\pi$ in $(\dirG, \g)$ such that $\cT_\pi$ is a restriction of $\cT_W$.
	\item There is a set of vertices $Z$
	such that $|Z|\leq g(t)$ and there is a flat $100t$-wall $(\overrightarrow{W_0}, \g)$ in $(\dirG-Z, \g)$ with top nails $N_0$ and certifying separation	$(A_0, B_0)$ such that
	$\cT_{W_0}$ is a restriction of $\cT_W$ and after possibly shifting
	\begin{enumerate}
		\item[(b.i)] $(\overrightarrow{W_0}, \g)$ is facially $(\G_1,\G_2)$-non-zero, or
		\item[(b.ii)] for some $i\in [2]$,  $(\overrightarrow{B_0}, \g_i)$ is null-labeled, $(\overrightarrow{W_0}, \g)$ is facially $\G_{3-i}$-non-zero,
		and there is a $\G_i$-clean $N_0$-linkage $\cP$ of size $t$ 
		with respect to $(A_0,B_0)$, or
		\item[(b.iii)] $(\overrightarrow{B_0}, \g)$ is null-labeled
		and there is a $(\G_1,\G_2)$-clean pair $(\cP,\cQ)$ of $N_0$-linkages of size $t$ with respect to $(A_0,B_0)$.
  \end{enumerate}
  \item There is a set $Z$ of vertices of $(\dirG, \g)$ and some $i\in [2]$ such that $|Z|\leq g(t)$ and the $\cT_W$-large block of $$(\dirG-Z, \g)$$ is $\G_i$-bipartite.
\end{enumerate}
\end{reptheorem}

\begin{proof}
Let $\cT_W$ be the tangle in $G$ induced by $W$.  
We first suppose that there is a $T(t)$-clique minor $K$ in $G$ (where $T(t)$ is the function from Lemma \ref{lemma: odd Kt minor2}) such that $\cT_K$ is a restriction of $\cT_W$.  
By Lemma \ref{lemma: odd Kt minor2}, there is a $(\G_1,\G_2)$-odd $K_t$-model $\pi$ in $(\dirG, \g)$ such that $\cT_\pi$ is a restriction of $\cT_W$ or 
there is a set $Z$ of vertices (with $|Z|$ bounded by a function of $t$) and an index $i \in [2]$ such that the $\cT_K$-large block of $(\dirG-Z, \g)$ is $\G_i$-bipartite.  
Since $\cT_K$ is a restriction of $\cT_W$, this block is also $\cT_W$-large, so we are done.  

Therefore, we may assume that $\cT_W$ does not control a large clique-minor.  
By the Flat Wall Theorem (Theorem~\ref{flatwalltheorem}), 
there is a set of
vertices $Z_1$ such that $|Z_1|$ is bounded in terms of $t$ and $W$ contains a $10^{22}t^{18}$-subwall $W_1$ which is flat in $G_1=G - Z_1$.  

By Lemma \ref{lemma: two group wall}, there exists a flat $10^7t^6$-wall $W_2$ with certifying separation $(A_2, B_2)$ such that 
\begin{enumerate}[(i)]
	\item $(\dirW_2, \g)$ is facially $(\G_1,\G_2)$-non-zero, or
	\item $(\dirW_2, \g)$ is facially $\G_i$-non-zero and $(\overrightarrow{B_2}, \g)$ is $\G_{3-i}$-bipartite for some $i\in [2]$, or
	\item $(\overrightarrow{B_2}, \g)$ is $(\G_1 \oplus \G_2)$-bipartite.
\end{enumerate}  
Moreover, $\zT_{W_2}$ is a restriction of $\zT_{W_1}$.  

If $(\dirW_2, \g)$ is facially $(\G_1,\G_2)$-non-zero, then we are done.  
Otherwise, let $W_3$ be a $10^6t^6$-subwall of $W_2$ that is $1$-contained in $W_2$.
Let $N_3$ be the top nails of $W_3$ with respect to $W_2$.  
The wall $W_3$ is flat since it is a subwall of $W_2$. Fix a certifying separation $(A_3, B_3)$ for $W_3$ with $|V(B_3)|$ minimum.  

Let $H$ be a component of $A_2$ containing a vertex of $A_2 \cap B_2$.  The union of $H$ along with the component of $W_2 - V(W_3)$ containing the boundary cycle of $W_2$ is a connected subgraph of $G$ which is disjoint from $W_3$ and has as neighbors every nail and corner of $W_3$ in the boundary cycle of $W_3$.  It follows from Proposition~\ref{lemma: boundary cycle excluded} that $H$ is a subgraph of $A_3$.  
We conclude that $B_3$ is a subgraph of $B_2$.  

Suppose (ii) holds; that is, $(\dirW_2, \g)$ is facially $\G_i$-non-zero and $(\overrightarrow{B_2}, \g)$ is $\G_{3-i}$-bipartite for some $i\in [2]$.  
We perform shifts so that all edges in $(\overrightarrow{B_2}, \g_{3-i})$ are null-labeled.
Let $W_4$ be a $10t$-contained $100t$-subwall of $W_3$
(such that every brick of $W_4$ is a brick of $W_3$)
 with certifying separation $(A_4,B_4)$
such that $|V(B_4)|$ is minimal.
Let $N_4$ be the top nails of $W_4$ with respect to $W_3$.  

If there is a set of vertices $Z_2$ of size at most $2\cdot 10^5t^6$ meeting all $\G_{3-i}$-non-zero $N_3$-paths in $(\dirG_1-(V(B_3) \setminus N_3), \g)$,
then by Lemma~\ref{lemma: zero walls}, the $\cT_{W_3}$-large block of $(\dirG_1-Z_2, \g)$  is $\G_{3-i}$-bipartite. Therefore, $Z=Z_1\cup Z_2$ satisfies the third outcome of the theorem.

Thus, we may suppose that such a set $Z_2$ does not exist.
Applying Lemma~\ref{lemma: non-zero A-paths} gives $10^5t^6$ disjoint $\G_{3-i}$-non-zero $N_3$-paths in $(\dirG_1-(V(B_3) \setminus N_3), \g)$.  
Applying Lemma~\ref{lemma: wall paths1} yields a $\G_{3-i}$-clean $W_4$-linkage of size $t$, as required.

Therefore, we may assume that (iii) holds; that is, $(\overrightarrow{B_2}, \g)$ is $(\G_1 \oplus \G_2)$-bipartite.  
We perform shifts so that all edges in $(\overrightarrow{B_2}, \g)$ are null-labeled.  As $B_3$ is a subgraph of $B_2$, all the edges in $(\overrightarrow{B_3}, \g)$ are null-labeled.  By the previous argument, we may assume that there are $10^5t^6$ disjoint $\G_{i}$-non-zero $N_3$-paths in $(\dirG_1-(V(B_3) \setminus N_3), \g)$ for both $i\in[2]$.
Applying Lemma~\ref{lemma: wall paths} yields a $(\G_1,\G_2)$-clean pair of $N_4$-linkages of size $t$, which completes the proof of the theorem.
\end{proof}

\section{Deriving the Erd\H{o}s-P\'osa results} \label{sec:deriving}

We finish our paper by deriving Theorem~\ref{thm: EP hi cycles} and \ref{thm: EP groups sparse} 
from Theorem \ref{thm: main wall}. We also give some additional applications. For the reader's convenience we restate both theorems. 
\begin{reptheorem}{thm: EP hi cycles}
For every integer $k$, there exists an integer $f(k)$ with the following property.  
Let $\G_1$ and $\G_2$ be  groups
and let $(\dirG, \g)$ be a $(\G_1 \oplus \G_2)$-labeled graph.  
Then, $(\dirG, \g)$ contains $k$ $(\G_1,\G_2)$-non-zero cycles such that each vertex of $(\dirG, \g)$ is in at most two of these cycles, 
or there exists a set of at most $f(k)$ vertices of $G$ such that $(\dirG-X, \g)$ does not contain any $(\G_1,\G_2)$-non-zero cycle.  
\end{reptheorem}

\begin{proof}
Let $f$ be a fast growing function to be specified later.
Assume for a contradiction that $((\dirG, \g),k)$ is a minimal counterexample showing that $f$ is not a half-integral \EP{} function for the set of $(\G_1 \oplus \G_2)$-labeled graphs.  By Lemma~\ref{lemma: EP tangle} and Theorem~\ref{lemma: tangle wall}, there is a tangle $\cT$ of order $T_a$ and a $T_b$-wall $W_1$ in $G$ such that $\cT_{W_1}$ is a restriction of $\cT$.  
We specify $T_b$ later and choose $T_a$ sufficiently large in terms of $T_b$, as required by Theorem~\ref{lemma: tangle wall}.

Next we apply Theorem~\ref{thm: main wall} to $(\dirW_1, \g)$ and choose $T_b$ sufficiently large so that we can apply Theorem~\ref{thm: main wall} with $t=6k$.
If there is a $(\G_1,\G_2)$-odd $K_{6k}$-minor in $(\dirG, \g)$, 
then, by Lemma~\ref{lemma: two odd triangles} (applied $k$ times to two disjoint cycles given by two disjoint triangles of the $K_{6k}$-model), 
there are $k$ disjoint $(\G_1,\G_2)$-non-zero cycles in $(\dirG, \g)$, which is a contradiction to our assumption.

If there is a set $Z \subseteq V(G)$ such that  $|Z|\leq g(6k)$ (the function $g$ of Theorem~\ref{thm: main wall}) and the $\cT$-large block of $(\dirG-Z, \g)$ is $\G_i$-bipartite for some $i\in[2]$,
then $(\dirG-Z, \g)$ does not contain any $(\G_1,\G_2)$-non-zero cycle, which is a contradiction to our assumption.

Therefore, by Theorem~\ref{thm: main wall}, we may assume that there exists a set $Z \subseteq V(G)$ such that $|Z|\leq g(6k)$ and $G-Z$ contains a flat $600k$-wall $W_2$ with certifying separation $(A_2, B_2)$ and top nails $N_2$ such that
$\cT_{W_{2}}$ is a restriction of $\cT_{W_1}$ and after possibly shifting
	\begin{itemize}
		\item $(\overrightarrow{W_{2}}, \g)$ is facially $(\G_1,\G_2)$-non-zero, or
		\item for some $i\in [2]$,  the wall $(\overrightarrow{B_2}, \g_{i})$ is null-labeled, $(\overrightarrow{W_2}, \g)$ is facially $\G_{3-i}$-non-zero,
		and there is a $\G_i$-clean $N_2$-linkage $\cP$ with respect to $(A_2,B_2)$ of size $k$, or
		\item $(\overrightarrow{B_2}, \g)$ is null-labeled
		and there is a $(\G_1,\G_2)$-clean pair $(\cP,\cQ)$ of $N_2$-linkages of size $2k$
		with respect to $(A_2,B_2)$.
  \end{itemize}

If  $(\overrightarrow{W_{2}}, \g)$ is facially $(\G_1,\G_2)$-non-zero, then clearly $(\dirG, \g)$
contains $k$ disjoint $(\G_1,\G_2)$-non-zero cycles, which is a contradiction.

Suppose next, by symmetry, that $(\overrightarrow{B_2}, \g_1)$ is null-labeled, $(\overrightarrow{W_{2}}, \g)$ is facially $\G_{2}$-non-zero,
and there is a $\G_1$-clean $N_2$-linkage $\cP=\{P_1, \dots, P_k\}$.
The endpoints of the paths in $\cP$ inherit a linear ordering from $N_2$.  We extend each $P_i$ to a path $P_i'$ as follows.  
First extend each $P_i$ from each of its ends to the next smaller branch vertex (or corner) of $W_2$.
Next, extend each of these new paths via the vertical paths of $W_2$ so that $P_i'$ crosses exactly $10i$ horizontal paths and for each horizontal path $H$ which intersects $P_i'$, the graph $H\cap P_i'$ has two components.
Let $a(i)$ be the integer such that the endpoints of $P_i'$ lie on the horizontal path $P_{a(i)}(h)$ of $W_2$.
Let $H_i$ be the subpath of $P_{a(i)}(h)$ that joins the ends of $P_i'$, and let $D_i$ be a brick of $W_2$ intersecting $H_i$ in at least one edge.  Since $D_i$ is a $\G_2$-non-zero cycle, either $P_i' \cup H_i$ or $(P_i' \cup H_i) \Delta D_i$ is a $(\G_1, \G_2)$-non-zero cycle where $\Delta$ denotes the symmetric difference.  In total, we have constructed $k$ $(\G_1,\G_2)$-non-zero cycles, and every vertex of $G$ is contained in at most two cycles, which is a contradiction. 

Finally, suppose $(\overrightarrow{B_2}, \g)$ is null-labeled
		and there is a $(\G_1,\G_2)$-clean pair $(\cP,\cQ)$ of $N_2$-linkages of size $2k$.
If $\cP$ or $\cQ$ contain $k$ paths that are $(\G_1, \G_2$)-non-zero,
then using these $k$ paths and the null-labeled wall ${W_2}$ in the same construction as the previous paragraph yields a set of $k$ $(\G_1,\G_2)$-non-zero cycles such that every vertex is in at most two of the cycles, a contradiction.
Hence, we may assume that there is a $(\G_1,\G_2)$-clean pair of $N_2$-linkages $(\cP',\cQ')$ of size $k$ such that every path in $\cP'$ is $\G_1$-non-zero and $\G_2$-zero and every path in $\cQ'$ is $\G_1$-zero and $\G_2$-non-zero.  
Extending the paths in $\cP'\cup \cQ'$  similarly as in the previous case yields $k$  $(\G_1,\G_2)$-non-zero cycles each containing exactly one path of $\cP$, one path of $\cQ$, and the subpaths of two distinct horizontal paths of $W_2$. 
The cycles can be constructed such that every vertex of $G$ is contained in at most two of these cycles, which is the final contradiction.
\end{proof}

\begin{reptheorem}{thm: EP groups sparse}
For every integer $k$, there exists an integer $f(k)$ with the following property.  
Let $\G_1$ and $\G_2$ be  groups 
and let $(\dirG, \g)$ be a \sparse\ $(\G_1 \oplus \G_2)$-labeled graph.
Then, $(\dirG, \g)$ contains $k$ disjoint $(\G_1,\G_2)$-non-zero cycles or there exists a set of at most $f(k)$ vertices of $(\dirG, \g)$
such that $(\dirG-X, \g)$ does not contain any $(\G_1,\G_2)$-non-zero cycle.  
\end{reptheorem}

\begin{proof}
The proof is similar to the proof of Theorem \ref{thm: EP hi cycles}.  
Let $f$ be a fast growing function to be specified later.
Assume for a contradiction that $((\dirG, \g),k)$ is a minimal counterexample showing that $f$ is not an \EP{} function for the set of \sparse{} $(\G_1 \oplus \G_2)$-labeled graphs.  By Lemma~\ref{lemma: EP tangle} and Theorem~\ref{lemma: tangle wall}, there is a tangle $\cT$ of order $T_a$ and a $T_b$-wall $W_1$ in $G$ such that $\cT_{W_1}$ is a restriction of $\cT$.  
We specify $T_b$ later and choose $T_a$ sufficiently large in terms of $T_b$, as required by Theorem~\ref{lemma: tangle wall}.

Next we apply Theorem~\ref{thm: main wall} to $(\dirW_1, \g)$ and choose $T_b$ sufficiently large so that we can apply Theorem~\ref{thm: main wall} with $t=6k$.
If there is a $(\G_1,\G_2)$-odd $K_{6k}$-minor in $(\dirG, \g)$, then by Lemma~\ref{lemma: two odd triangles}, there are $k$ disjoint $(\G_1,\G_2)$-non-zero cycles in $(\dirG, \g)$, which is a contradiction to our assumption.

If there is a set $Z \subseteq V(G)$ such that  $|Z|\leq g(6k)$ (the function $g$ of Theorem~\ref{thm: main wall}) and the $\cT$-large block of $(\dirG-Z, \g)$ is $\G_i$-bipartite for some $i\in[2]$,
then $(\dirG-Z, \g)$ does not contain any $(\G_1,\G_2)$-non-zero cycle, which is a contradiction to our assumption.

Therefore, by Theorem~\ref{thm: main wall}, we may assume that
there exists a set $Z \subseteq V(G)$ such that $|Z|\leq g(6k)$ and $G-Z$ contains a flat $600k$-wall $W_2$ with certifying separation $(A_2, B_2)$ and top nails $N_2$ such that
$\cT_{W_{2}}$ is a restriction of $\cT_{W_1}$ and after possibly shifting
\begin{itemize}
    \item $(\overrightarrow{W_{2}}, \g)$ is facially $(\G_1,\G_2)$-non-zero, or
	\item for some $i\in [2]$,  $(\overrightarrow{B_2}, \g_i)$ is null-labeled, $(\overrightarrow{W_{2}}, \g)$ is facially $\G_{3-i}$-non-zero,
	and there is a $\G_i$-clean $N_2$-linkage $\cP$ of size $k$ with respect to $(A_2,B_2)$, or
	\item $(\overrightarrow{B_2}, \g)$ is null-labeled
	and there is a $(\G_1,\G_2)$-clean pair  $(\cP,\cQ)$ of $N_2$-linkages with respect to $(A_2,B_2)$ of size $2k$.
\end{itemize}

If  $(\overrightarrow{W_{2}}, \g)$ is facially $(\G_1,\G_2)$-non-zero, then clearly $(\dirG, \g)$
contains $k$ disjoint $(\G_1,\G_2)$-non-zero cycles, which is a contradiction.

Suppose next, by symmetry, that $(\overrightarrow{B_2}, \g_1)$ is null-labeled, $(\overrightarrow{W_{2}}, \g)$ is facially $\G_{2}$-non-zero,
		and there is a $\G_1$-clean $N_2$-linkage $\cP$ of size $k$. If $\cP$ is in series or nested, then it is easy to see that we can extend $k$ paths in $\cP$ to a set of $k$ disjoint $(\G_1,\G_2)$-non-zero cycles, which is a contradiction.  Observe that $\cP$ cannot be crossing as $(\dirG, \g)$ is \sparse{}.
To see this,
observe that if $\cP$ is crossing, then for all paths $P_1,P_2\in \cP$, we have $\g_1(P_1)=\g_1(P_2)$.
Therefore, it is easy to construct two cycles $C_1,C_2$ such that $C_i$ consists of $P_i$ and a (null-labeled) path in $W_2$ for each $i\in[2]$.
These two cycles contradict the fact $(\dirG,\g)$ is \sparse{}.

Finally, suppose $(\overrightarrow{B_2}, \g)$ is null-labeled and there is a $(\G_1,\G_2)$-clean pair of $N_2$-linkages $(\cP,\cQ)$ of size $2k$. 
If $\cP$ (or $\cQ$) contains $k$ paths that are $(\G_1, \G_2)$-non-zero, then we can extend these paths to $k$ disjoint $(\G_1, \G_2)$-non-zero cycles.  Note that in order to do so, we need that $\cP$ is not crossing, which holds given that $(\dirG, \g)$ is \sparse{}.  
  
Hence we may assume that there is a $(\G_1,\G_2)$-clean pair of $N_2$-linkages $(\cP',\cQ')$ of size $k$ such that each path in $\cP'$ is $\G_1$-non-zero and $\G_2$-zero, and each path in $\cQ'$ is $\G_1$-zero and $\G_2$-non-zero.  
If $\cP'$ and $\cQ'$ are of the same type, then there exist $k$ disjoint $(\G_1, \G_2)$-non-zero cycles, where each cycle uses exactly one path from each of $\cP'$ and $\cQ'$.  We conclude that, by symmetry, $\cP'$ is in series and $\cQ'$ is nested.  
However, this is a contradiction, because $\cQ'$ cannot be nested as $(\dirG, \g)$ is \sparse{}.
\end{proof}	

\textbf{Remark.} In the proof of Theorem \ref{thm: EP groups sparse}, the robustness of $(G, \g)$  is only used in three places.  
The first is if the clean $N_2$-linkage $\cP$ is a crossing linkage.  
The second is if in the $(\G_1,\G_2)$-clean pair of $N_2$-linkages $(\cP, \cQ)$, one of $\cP$ or $\cQ$ is crossing and contains $k$ paths that are $(\G_1, \G_2)$-non-zero.  
The third is if the $(\G_1,\G_2)$-clean pair of $W$-linkages $(\cP',\cQ')$ are of different types. 
If we allow these as separate outcomes, then we obtain the following theorem for the full \EP{}-property for $(\G_1,\G_2)$-non-zero cycles. 

\begin{theorem} \label{doubleEscher}
For every integer $k$, there exists an integer $f(k)$ with the following property.  
For all  groups $\G_1$ and $\G_2$, and all $(\G_1 \oplus \G_2)$-labeled graphs $(\dirG, \g)$, at least one of the following holds.
\begin{enumerate}[(i)]
    \item 
    $(\dirG, \g)$ contains $k$ disjoint $(\G_1, \G_2)$-non-zero cycles.
    \item
    There exists a set of at most $f(k)$ vertices of $G$ such that $(\dirG-X, \g)$ does not contain any $(\G_1,\G_2)$-non-zero cycle.
\end{enumerate}
$(\dirG, \g)$ contains a $600k$-wall $(\dirW, \g)$ with top nails $N$ satisfying one of the following.
\begin{enumerate}
    \item [(iii)]
    There exists a crossing $N$-linkage $\cP$ of size $k$ which is internally disjoint from $W$ such that after possibly shifting there exists an index $i \in [2]$ such that $(\dirW, \g_i)$ is null-labeled, $(\dirW, \g)$ is facially $\G_{3-i}$-non-zero, and for all $P_1,P_2\in \cP$, we have $\g_i(P_1)=\g_i(P_2)\neq 0$, or
    \item[(iv)]
    there exists a crossing $N$-linkage $\cP$ of size $k$ which is internally disjoint from $W$ such that after possibly shifting $(\dirW, \g)$ is null-labeled, and each path in $\cP$ is $(\G_1, \G_2)$-non-zero, or
    \item[(v)]
    there exists a pair of $N$-linkages $(\cP, \cQ)$ of size $k$ such that $\cP$ and $\cQ$ are of different type and after possibly shifting 
    \begin{itemize}
        \item $(\cP,\cQ)$ is a $(\G_1,\G_2)$-clean $N$-linkage,
        \item 
        $(\dirW, \g)$ is null-labeled, and
        \item
        $P$ is $\G_1$-zero and $\G_2$-non-zero for all $P\in\cP$, and
        \item
         $Q$ is $\G_1$-non-zero and $\G_2$-zero for all $Q\in\cQ$.
    \end{itemize}
\end{enumerate}
\end{theorem}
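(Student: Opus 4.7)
The plan is to run the proof of Theorem~\ref{thm: EP groups sparse} essentially verbatim, but at each of the three points where robustness of $(\dirG, \g)$ was previously invoked to derive a contradiction, we instead record the corresponding configuration as one of the outcomes (iii), (iv), or~(v). Indeed, the remark immediately following Theorem~\ref{thm: EP groups sparse} already isolates these three places, so the proof reduces to checking that each yields the stated obstruction.

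First, I would assume that neither (i) nor (ii) holds and let $f$ grow sufficiently fast. By Lemma~\ref{lemma: EP tangle} applied to a minimal counterexample together with Theorem~\ref{lemma: tangle wall}, we obtain a large wall $W_1$ in $(\dirG, \g)$ whose induced tangle is a restriction of a tangle avoiding $(\G_1,\G_2)$-non-zero cycles. Apply Theorem~\ref{thm: main wall} with $t=6k$. Case~(a) produces a $(\G_1,\G_2)$-odd $K_{6k}$-model, from which $k$ disjoint applications of Lemma~\ref{lemma: two odd triangles} to disjoint triangles yield $k$ disjoint $(\G_1,\G_2)$-non-zero cycles (contradicting failure of~(i)); case~(c) gives a bounded-size set $Z$ whose removal leaves the tangle-large block $\G_i$-bipartite (contradicting failure of~(ii)). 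We therefore land in case~(b), with a flat $100t=600k$-wall $W_0$ in $G-Z$, top nails $N_0$, and certifying separation $(A_0,B_0)$, which will serve as the candidate wall for outcomes (iii), (iv), and~(v).

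Next, I would handle the three sub-cases of~(b). Sub-case (b.i) is immediate: the $(\G_1,\G_2)$-non-zero facial cycles of $W_0$ give $k$ disjoint non-zero cycles, contradicting~(i). In sub-case (b.ii) we have a $\G_i$-clean $N_0$-linkage $\cP$ of size $6k$ with $(\overrightarrow{B_0},\g_i)$ null-labeled and $W_0$ facially $\G_{3-i}$-non-zero: if $\cP$ is in series or nested, each path closes through the $\g_i$-null wall and a $\G_{3-i}$-non-zero facial brick into a $(\G_1,\G_2)$-non-zero cycle, and $k$ of these can be made disjoint (contradicting~(i)); otherwise $\cP$ is crossing, and the $\G_i$-clean definition forces a common non-zero $\g_i$-value, so any $k$ of its paths realize outcome~(iii). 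In sub-case (b.iii), $(\overrightarrow{B_0},\g)$ is null-labeled and we have a $(\G_1,\G_2)$-clean pair $(\cP,\cQ)$ of size $6k$. Partition $\cP$ into paths that are $(\G_1,\G_2)$-non-zero versus paths that are only $\G_1$-non-zero with $\g_2=0$, and similarly for $\cQ$. If $\cP$ contains at least $k$ $(\G_1,\G_2)$-non-zero paths and $\cP$ is not crossing, these extend through the null wall into $k$ disjoint non-zero cycles (contradicting~(i)); if $\cP$ is crossing, outcome~(iv) holds. Symmetrically for $\cQ$. Otherwise, at least $5k$ paths of $\cP$ are $\G_1$-non-zero with $\g_2=0$ and at least $5k$ paths of $\cQ$ are $\G_2$-non-zero with $\g_1=0$; selecting $k$ from each yields a clean sub-pair $(\cP',\cQ')$ of size $k$. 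If $\cP'$ and $\cQ'$ have the same type, then pairing $P_i$ with $Q_i$ and routing through disjoint portions of the null wall produces $k$ disjoint cycles with $\g_1$-value $\g_1(P_i)\neq 0$ and $\g_2$-value $\g_2(Q_i)\neq 0$ (contradicting~(i)); if they have different types, relabeling as needed yields outcome~(v).

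The hard part will be the bookkeeping in sub-case~(b.iii): one must verify that any sub-linkages $\cP' \subseteq \cP$ and $\cQ' \subseteq \cQ$ still satisfy clause~(iv) of the $(\G_1,\G_2)$-clean definition (the interval-separation clause), and that the routing in the "same type" case is feasible given that overlapping intervals are permitted whenever neither linkage is in series. Both are handled by the ample slack of $t=6k$ and standard routing inside the $600k$-wall; purity of $\cP',\cQ'$ is inherited from $\cP,\cQ$, and the interval clause can be preserved by choosing the $k$ retained paths consistently (e.g.\ via extremal indices). The remaining pieces---cycle constructions from in-series or nested non-zero linkages against a null-labeled wall, and the application of Lemma~\ref{lemma: two odd triangles}---are already present in the proofs of Theorems~\ref{thm: EP hi cycles} and~\ref{thm: EP groups sparse}.
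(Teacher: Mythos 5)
Your proposal is correct and coincides with the paper's own approach: the paper proves Theorem~\ref{doubleEscher} by rerunning the proof of Theorem~\ref{thm: EP groups sparse} and promoting each of the three invocations of robustness (a crossing $\G_i$-clean linkage in case (b.ii); a crossing linkage with $k$ doubly non-zero paths in case (b.iii); a clean pair of differing types) to outcomes (iii), (iv), and (v) respectively, exactly as you describe. The sublinkage and routing bookkeeping you flag is handled by the same slack ($t = 6k$) and the same null-wall extension constructions already appearing in the proofs of Theorems~\ref{thm: EP hi cycles} and~\ref{thm: EP groups sparse}.
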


Let $G$ be a graph and $S \subseteq V(G)$.  Let $\overrightarrow{G}$ be an arbitrary orientation of $G$ and let $e_1, \dots, e_m$ be an enumeration of $E(\dirG)$.  Define $\g: E(\overrightarrow{G}) \to \Z / 2\Z \oplus \Z$ by $\g(e_i)=(1, 2^i)$, if $e_i$ has at least one end in $S$, and $\g(e_i)=(1,0)$, otherwise. By applying Theorem \ref{doubleEscher} to $(\dirG, \g)$, we obtain the following theorem for odd $S$-cycles.

\begin{theorem} \label{oddSobstructions}
For every positive integer $k$, there exist an integer $f(k)$ with the following property.  
For all graphs $G$ and all $S \subseteq V(G)$ at least one of the following holds.
\begin{enumerate}[(i)]
	\item $G$ contains $k$ disjoint odd $S$-cycles,
	\item there exists a set $X \subseteq V(G)$ such that $G-X$ does not contain an odd $S$-cycle and $|X|\leq f(k)$, or
	\item $G$ contains a bipartite $600k$-wall $W$ with top nails $N$ such that every face of $W$ contains a vertex of $S$, 
	and there is a crossing $N$-linkage $\mathcal{P}$ of size $k$ which is internally disjoint from $W$ such that $W\cup P$ is not bipartite for all $P\in \cP$, or
	\item $G$ contains a bipartite $600k$-wall $W$ with top nails $N$ such that $V(W) \cap S=\emptyset$,  
	and there is a crossing $N$-linkage $\mathcal{P}$ of size $k$ which is internally disjoint from $W$ such that $W\cup P$ is not bipartite and $V(P)\cap S\neq \emptyset$ for all $P\in \cP$,	or
	\item
	$G$ contains a bipartite $600k$-wall $W$ with top nails $N$ such that 
	$V(W) \cap S=\emptyset$, and there is a pair of pure $N$-linkages $\mathcal{P}$ and $\mathcal{Q}$ each of size $k$ such that 
    \begin{itemize}
    \item $\cP\cup \cQ$ is a linkage of size $2k$,
    \item $\cP$ is in series and $\cQ$ is either crossing or nested,
    \item $V(P) \cap S \neq \emptyset$ and $P\cup W$ is bipartite for all $P\in\cP$,
    \item $V(Q) \cap S=\emptyset$ and $Q\cup W$ is not bipartite for all $Q\in\cQ$, and
	\item $I_{\cP}\cap I_{\cQ}=\emptyset$. 
\end{itemize}
\end{enumerate}
\end{theorem}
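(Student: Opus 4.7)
My plan is to apply Theorem \ref{doubleEscher} to the group labeling used to derive Theorem \ref{thm: odd S}: let $\dirG$ be an arbitrary orientation of $G$, enumerate $E(\dirG)$ as $e_1,\ldots,e_m$, and define $\gamma : E(\dirG) \to \mathbb{Z}/2\mathbb{Z} \oplus \mathbb{Z}$ by $\gamma(e_i) = (1, 2^i)$ if $e_i$ has an endpoint in $S$, and $\gamma(e_i) = (1, 0)$ otherwise. A cycle is $\Gamma_1$-non-zero iff it has odd length, and its $\gamma_2$-value is a signed sum of distinct powers of $2$ over its $S$-edges, which vanishes iff no $S$-edge appears iff the cycle has no vertex in $S$; hence the $(\Gamma_1,\Gamma_2)$-non-zero cycles of $(\dirG,\gamma)$ are precisely the odd $S$-cycles of $G$. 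I apply Theorem \ref{doubleEscher} with parameter $k$; outcomes (i) and (ii) translate verbatim to outcomes (i) and (ii) here, so I focus on outcomes (iii)--(v).

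For outcome (iii) of Theorem \ref{doubleEscher}, null-labeling of $(\dirW, \gamma_i)$ after shifting is equivalent to $W$ being $\Gamma_i$-bipartite, which for $i=1$ means $W$ is bipartite, and for $i=2$ means $V(W) \cap S = \es$. In the sub-case $i=1$, a path $P$ satisfies $\gamma_1(P) \ne 0$ after shifting exactly when $W \cup P$ is not bipartite, and the facial $\Gamma_2$-non-zero condition says every face of $W$ contains a vertex of $S$, giving outcome (iii). The sub-case $i=2$ does not occur: since $V(W) \cap S = \es$, all wall edges have $\gamma_2 = 0$, so the shift $\alpha_2$ must be constant on the connected set $V(W)$ and in particular on the top-nail set, hence $\gamma_2(P)$ is unchanged by shifting; but disjoint paths $P_1, P_2 \in \cP$ have disjoint sets of $S$-edges, so $\gamma_2(P_1) - \gamma_2(P_2)$ is a signed sum over distinct powers of $2$, which vanishes only when its support is empty (by reducing modulo the smallest relevant power of $2$), contradicting $\gamma_2(P_1) = \gamma_2(P_2) \ne 0$. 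Outcome (iv) of Theorem \ref{doubleEscher} gives outcome (iv) here directly: null-labeling of $W$ means $W$ is bipartite with $V(W) \cap S = \es$, and each $P$ of the crossing linkage being $(\Gamma_1,\Gamma_2)$-non-zero means $P$ is odd and contains an $S$-vertex.

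For outcome (v) of Theorem \ref{doubleEscher}, the wall is bipartite with $V(W) \cap S = \es$, paths in $\cP$ are even and touch $S$ (so $W \cup P$ is bipartite), paths in $\cQ$ are odd and avoid $S$ (so $W \cup Q$ is not bipartite), and $\cP, \cQ$ are of different pure types. When $\cP$ is in series and $\cQ$ is crossing or nested, the arrangement condition of Theorem \ref{doubleEscher} forces $I_\cP \cap I_\cQ = \es$ (overlap is ruled out whenever one linkage is in series), yielding outcome (v) verbatim. In the remaining type combinations, I would produce $k$ disjoint odd $S$-cycles (outcome (i)) by pairing each $Q_i \in \cQ$ with a distinct $P_{\sigma(i)} \in \cP$ and splicing them via two $W$-paths into a cycle $C_i$; since every brick of the bipartite wall $W$ is an even cycle, adding even-brick detours allows us to adjust $W$-path parities so that each $C_i$ is odd, while the $S$-vertex of $P_{\sigma(i)}$ makes $C_i$ an $S$-cycle. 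The main obstacle will be executing this disjoint routing cleanly in the overlapping regime $I_\cP^\ell < I_\cQ^\ell < I_\cP^r < I_\cQ^r$ (where neither linkage is in series), since the interleaving of endpoints forces different $C_i$'s to compete for the same regions of $W$; I expect this to come down to a careful matching argument exploiting the crossing/nested structure of each linkage and the $600k$ size of the wall.
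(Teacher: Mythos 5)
Your approach matches the paper's (which simply says ``apply Theorem~\ref{doubleEscher}'' to the $\mathbb{Z}/2\mathbb{Z}\oplus\mathbb{Z}$ labeling), and your detailed translations of outcomes (iii) and (iv) are correct. In particular, your argument that the sub-case $i=2$ of outcome (iii) cannot occur is a nice observation and it is essentially right: once the wall is $\gamma_2$-null-labeled and $V(W)\cap S=\emptyset$, the $\Gamma_2$-shift is constant on $V(W)$, so $\gamma_2(P)$ is shift-invariant, and the signed-sum-of-distinct-powers-of-two argument rules out two disjoint paths of equal non-zero $\gamma_2$-value. Note also that the ``face'' in outcome (iii) of the theorem should really be read as ``brick''/``finite face'', but your interpretation is the intended one.

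The genuine gap is in your handling of outcome (v) of Theorem~\ref{doubleEscher}. You correctly observe that when $\cP$ is in series the arrangement condition forces $I_\cP\cap I_\cQ=\emptyset$, giving outcome (v) of Theorem~\ref{oddSobstructions} verbatim. But for the remaining ``different type'' combinations you assert that $k$ disjoint odd $S$-cycles can be produced, and this is exactly where the argument is incomplete in two ways. First, the ``even-brick detour'' device is a red herring: detours around even bricks change lengths by even amounts and so cannot change parity -- but fortunately no parity fix is needed, since in the shifted labeling $\gamma_1(C_i)=\gamma_1(P)+\gamma_1(Q)=0+1=1$ automatically. Second, and more seriously, the claim that the remaining type combinations always yield $k$ disjoint cycles is not justified and is likely false: the canonical obstructions of Section~\ref{sec:obstructions} include, for instance, the planar configuration where the nested linkage carries the parity-breaking paths and the series linkage carries the $S$-touching paths, and that graph has \emph{no} two disjoint odd $S$-cycles. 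So the ``$\cQ$ in series, $\cP$ nested/crossing'' case and the overlapping nested-crossing case are not obviously reducible to outcome (i); they either require a symmetry normalization (swap the two coordinates and reverse the linear ordering on the nails, which swaps $\cP$ and $\cQ$ while preserving $(\Gamma_1,\Gamma_2)$-cleanliness) to force $\cP$ into series before applying the theorem, or they would have to appear as additional cases in the statement of Theorem~\ref{oddSobstructions}. You should make explicit whichever of these resolutions you intend; as written, the deduction from Theorem~\ref{doubleEscher}(v) is not complete.
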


\textbf{Remark.} If $S=V(G)$, the last two outcomes of Theorem \ref{oddSobstructions} obviously cannot occur.  Therefore, we obtain an independent proof of Reed's Escher-wall Theorem (Theorem \ref{escherwalls}).

As a last application, we present the canonical set of obstructions for cycles not homologous to zero. 
Let $\Sigma$ be a surface, $\mathcal{H}(\Sigma)$ be the (first) homology group of $\Sigma$, and $G$ be a graph embedded in $\Sigma$. Recall the construction of a $(\mathcal{H}(\Sigma) \oplus \mathcal{H}(\Sigma))$-labeled graph $(\dirG, \g)$ from $G$ as follows. Let $\dirG$ be an arbitrary orientation of $G$.  Fix a spanning forest $F$ of $G$, and define $\g(e)=(0,0)$ for all $e \in \overrightarrow{F}$. For $e \notin \overrightarrow{F}$, define $\g(e)$ to be $(\alpha, \alpha)$, where $\alpha$ is the homology class of the unique cycle contained in $e \cup F$. If we apply Theorem \ref{doubleEscher} to $(G, \g)$, then outcome $(iii)$ and $(v)$ cannot occur since $\g_1(C)=\g_2(C)$ for all cycles $C$ in $(G, \g)$. Therefore, we immediately obtain the following theorem. 

\begin{theorem} \label{homology obstructions}
For every integer $k$ there exists an integer $f(k)$ with the following property.  For every surface $\Sigma$ and every graph $G$ embedded in $\Sigma$ at least one of the following holds.
\begin{enumerate}[(i)]
    \item $G$ contains $k$ disjoint cycles not homologous to zero, 
    \item there is a set of at most $f(k)$ vertices of $G$ such that all cycles of $G-X$ are homologous to zero, or
    \item $G$ contains a $600k$-wall $W$ with top nails $N$ and a crossing $N$-linkage $\mathcal{P}$ of size $k$ which is internally disjoint from $W$ such that each path in $\mathcal{P}$ goes through the same crosscap of $\Sigma$.  
\end{enumerate}
\end{theorem}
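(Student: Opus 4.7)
The plan is to apply Theorem~\ref{doubleEscher} to a suitably constructed $(\mathcal{H}(\Sigma) \oplus \mathcal{H}(\Sigma))$-labeled graph. Let $\dirG$ be an arbitrary orientation of $G$ and let $\g_0 : E(\dirG) \to \mathcal{H}(\Sigma)$ be a homology labeling provided by Proposition~\ref{homologylabel}. Define $\g : E(\dirG) \to \mathcal{H}(\Sigma) \oplus \mathcal{H}(\Sigma)$ by $\g(e) = (\g_0(e), \g_0(e))$. Then for every closed walk $C$ in $(\dirG, \g)$ we have $\g_1(C) = \g_2(C)$ equal to the homology class of $C$, and this identity on cycle-values is preserved by any sequence of shifts. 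Consequently, a cycle in $G$ is not homologous to zero if and only if the corresponding cycle in $(\dirG, \g)$ is $(\G_1, \G_2)$-non-zero.

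I would then apply Theorem~\ref{doubleEscher} to $(\dirG, \g)$ with parameter $k$ and analyze its five possible outcomes. Outcomes (i) and (ii) translate directly into outcomes (i) and (ii) of the present theorem. Outcomes (iii) and (v) can be ruled out using $\g_1(C) = \g_2(C)$ for all cycles $C$: in (iii) the wall would have null labels in one coordinate and a facially non-zero labeling in the other, forcing some facial cycle to violate the identity; in (v) one would find a path that is $\G_1$-zero and $\G_2$-non-zero, but closing such a path through the null-labeled wall gives a cycle whose $\g_1$- and $\g_2$-values agree with those of the path, contradicting $\g_1 = \g_2$ on cycles.

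This leaves outcome (iv): a $600k$-wall $W$ with top nails $N$ and a crossing $N$-linkage $\cP$ of size $k$ internally disjoint from $W$, where after shifting $(\dirW, \g)$ is null-labeled and every $P \in \cP$ is $(\G_1, \G_2)$-non-zero. By closing each such $P$ through the null-labeled wall we see that every path in $\cP$ corresponds to a cycle in $G$ representing a non-trivial element of $\mathcal{H}(\Sigma)$. The main obstacle is converting this algebraic information into the geometric statement that all paths in $\cP$ traverse the same crosscap of $\Sigma$; the intuition is that on a disk-like wall region, two crossing arcs that both realise non-trivial homology classes are forced to ``twist'' through the same one-sided feature, and a careful analysis of how a crossing linkage can be embedded in $\Sigma$ relative to a null-homologous wall (for example, by cutting $\Sigma$ along one such path and inspecting the resulting surface) will yield the required conclusion.
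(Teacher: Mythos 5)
Your proposal follows essentially the same approach as the paper: the doubled homology labeling (the paper uses a spanning‑forest construction, you invoke Proposition~\ref{homologylabel}, but these are the same thing), the application of Theorem~\ref{doubleEscher}, and ruling out outcomes~(iii) and~(v) via $\g_1(C)=\g_2(C)$ for all cycles $C$. Where you flag the final step — translating outcome~(iv) into the claim that every path of the crossing linkage passes through the same crosscap of $\Sigma$ — as an ``obstacle'' requiring a topological argument, the paper simply asserts that the theorem follows ``immediately'' with no further elaboration; your sketch of that step (a null-homologous flat wall sits in a disk‑like region, and a disjoint crossing linkage attached to its boundary that generates nontrivial homology must route through a one‑sided feature of the surface) is the right intuition, and the paper provides no more detail than you do.
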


As an immediate corollary of Theorem \ref{homology obstructions}, we obtain the (full) \EP{} property for cycles not homologous to zero on an orientable surface.  

\begin{corollary} \label{orientable homology}
For every integer $k$ there exists an integer $f(k)$ with the following property.  For every orientable surface $\Sigma$ and every graph $G$ embedded on $\Sigma$, either $G$ contains $k$ disjoint cycles not homologous to zero, or there is a set of at most $f(k)$ vertices of $G$ such that all cycles of $G-X$ are homologous to zero.
\end{corollary}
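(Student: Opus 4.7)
The plan is to derive Corollary \ref{orientable homology} as an immediate consequence of Theorem \ref{homology obstructions}, which has already been established for arbitrary (possibly non-orientable) surfaces. Given an orientable surface $\Sigma$, a graph $G$ embedded in $\Sigma$, and an integer $k$, I would first invoke Theorem \ref{homology obstructions} (with the value $f(k)$ supplied by it). This produces one of three possibilities: $G$ contains $k$ disjoint cycles not homologous to zero, $G$ admits a hitting set of size at most $f(k)$ for all cycles not homologous to zero, or $G$ contains a $600k$-wall $W$ with a crossing $N$-linkage $\cP$ of size $k$ such that every path in $\cP$ passes through the same crosscap of $\Sigma$.

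The first two possibilities are exactly the two alternatives asserted in the corollary, so the entire content of the proof is to rule out the third. This is immediate from the orientability hypothesis: an orientable surface has no crosscaps by definition, so no path in $G$ can traverse a crosscap of $\Sigma$. Hence for $k \geq 1$ the third outcome is vacuous, leaving only the desired dichotomy.

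The only conceptual work, therefore, lies entirely in Theorem \ref{homology obstructions}, whose proof in turn rests on Theorem \ref{doubleEscher} and on the observation that the labeling $\g$ induced by the homology $\mathcal{H}(\Sigma) \oplus \mathcal{H}(\Sigma)$ satisfies $\g_1(C) = \g_2(C)$ for every cycle $C$, which already excludes the asymmetric obstructions (iii) and (v) of Theorem \ref{doubleEscher}. Thus no additional argument is needed here: the potential obstacle to the full \EP{} property, namely a crossing linkage through a crosscap, cannot occur on an orientable surface, and the corollary follows.
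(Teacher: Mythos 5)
Your proof is correct and takes exactly the route the paper intends: invoke Theorem~\ref{homology obstructions} and observe that its third outcome, a crossing linkage through a crosscap, is vacuous on an orientable surface since orientable surfaces have no crosscaps. This matches the paper, which presents Corollary~\ref{orientable homology} as an immediate consequence of Theorem~\ref{homology obstructions}.
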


\textbf{Remark.} Fix $\ell \in \mathbb{N}$.  We call a cycle in a $(\G_1,\G_2)$-group-labeled graph $(\dirG, \g)$ \emph{long} if it has length at least $\ell$.
Note that the proofs of Theorem~\ref{thm: EP hi cycles} and \ref{thm: EP groups sparse} can easily be adapted for $(\G_1,\G_2)$-non-zero cycles which are also long,
by applying Theorem~\ref{thm: main wall} with $t=2\ell k$ instead of $t=6k$.

\section*{Acknowledgements} We would like Robin Thomas and Youngho Yoo for spotting an error in the proof of Lemma~\ref{lemma: odd Kt minor} in the published version of this article.  We have corrected the error in this version.


 

\vfill

\small
\vskip2mm plus 1fill
\noindent
Version \today{}
\bigbreak

\noindent
Tony Huynh
{\tt <tony.bourbaki@gmail.com>}\\
Département de Mathématique\\
Université Libre de Bruxelles\\
Belgium\\

\noindent
Felix Joos\\
Institut f\"ur Optimierung und Operations Research\\
Universit\"at Ulm\\
Germany\\

\noindent
Paul Wollan
{\tt <wollan@di.uniroma1.it>}\\
Department of Computer Science\\
University of Rome\\
Italy\\

\end{document}